\def\eqref#1{(\ref{#1})}
\newtheorem{theorem}{Theorem}[section]
\newtheorem{proposition}[theorem]{Proposition}
\newtheorem{lemma}[theorem]{Lemma}
\newtheorem{corollary}[theorem]{Corollary}
\theoremstyle{definition}
\newtheorem{definition}[theorem]{Definition}
\newtheorem{example}[theorem]{Example}
\newenvironment{remark}[1][Remark.]{\medskip\noindent \textbf{#1}\ \rm}{\smallskip}
\newenvironment{problem}[1][Problem.]{\medskip\noindent \textbf{#1}\ \rm}{\smallskip}
\newenvironment{conjecture}[1][Conjecture.]{\medskip\noindent \textbf{#1}\ \rm}{\smallskip}
\renewcommand{\theequation}{\thesection.\arabic{equation}}
\let\pdfoutput=\undefined\fi
\chardef\@x10\chardef\@xv60
\def\tcitime{
\def\@time{%
  \@minute\time\@hour\@minute\divide\@hour\@xv
  \ifnum\@hour<\@x 0\fi\the\@hour:%
  \multiply\@hour\@xv\advance\@minute-\@hour
  \ifnum\@minute<\@x 0\fi\the\@minute
  }}%
\def\x@hyperref#1#2#3{%
   \catcode`\~ = 12
   \catcode`\$ = 12
   \catcode`\_ = 12
   \catcode`\# = 12
   \catcode`\& = 12
   \catcode`\% = 12
   \y@hyperref{#1}{#2}{#3}%
}
\def\y@hyperref#1#2#3#4{%
   #2\ref{#4}#3
   \catcode`\~ = 13
   \catcode`\$ = 3
   \catcode`\_ = 8
   \catcode`\# = 6
   \catcode`\& = 4
   \catcode`\% = 14
}
\def\QCTOpt[#1]#2{%
  \def\QCTOptB{#1}
  \def\QCTOptA{#2}
}
\def\QCTNOpt#1{%
  \def\QCTOptA{#1}
  \let\QCTOptB\empty
}
\def\Qct{%
  \@ifnextchar[{%
    \QCTOpt}{\QCTNOpt}
}
\def\QCBOpt[#1]#2{%
  \def\QCBOptB{#1}%
  \def\QCBOptA{#2}%
}
\def\QCBNOpt#1{%
  \def\QCBOptA{#1}%
  \let\QCBOptB\empty
}
\def\Qcb{%
  \@ifnextchar[{%
    \QCBOpt}{\QCBNOpt}%
}
\def\PrepCapArgs{%
  \ifx\QCBOptA\empty
    \ifx\QCTOptA\empty
      {}%
    \else
      \ifx\QCTOptB\empty
        {\QCTOptA}%
      \else
        [\QCTOptB]{\QCTOptA}%
      \fi
    \fi
  \else
    \ifx\QCBOptA\empty
      {}%
    \else
      \ifx\QCBOptB\empty
        {\QCBOptA}%
      \else
        [\QCBOptB]{\QCBOptA}%
      \fi
    \fi
  \fi
}
\def\GRAPHICSPS#1{%
 \ifcase\GRAPHICSTYPE
   \special{ps: #1}%
 \or
   \special{language "PS", include "#1"}%
 \fi
}%
\def\graffile#1#2#3#4{%
    \bgroup
	   \@inlabelfalse
       \leavevmode
       \@ifundefined{bbl@deactivate}{\def~{\string~}}{\activesoff}%
        \raise -#4 \BOXTHEFRAME{%
           \hbox to #2{\raise #3\hbox to #2{\null #1\hfil}}}%
    \egroup
}%
\def\draftbox#1#2#3#4{%
 \leavevmode\raise -#4 \hbox{%
  \frame{\rlap{\protect\tiny #1}\hbox to #2%
   {\vrule height#3 width\z@ depth\z@\hfil}%
  }%
 }%
}%
\let\nographics=\@msidraft
\newif\ifwasdraft
\def\GRAPHIC#1#2#3#4#5{%
   \ifnum\@msidraft=\@ne\draftbox{#2}{#3}{#4}{#5}%
   \else\graffile{#1}{#3}{#4}{#5}%
   \fi
}
\def\addtoLaTeXparams#1{%
    \edef\LaTeXparams{\LaTeXparams #1}}%
\newif\ifBoxFrame \BoxFramefalse
\newif\ifOverFrame \OverFramefalse
\newif\ifUnderFrame \UnderFramefalse
\def\BOXTHEFRAME#1{%
   \hbox{%
      \ifBoxFrame
         \frame{#1}%
      \else
         {#1}%
      \fi
   }%
}
\def\doFRAMEparams#1{\BoxFramefalse\OverFramefalse\UnderFramefalse\readFRAMEparams#1\end}%
\def\readFRAMEparams#1{%
 \ifx#1\end%
  \let\next=\relax
  \else
  \ifx#1i\dispkind=\z@\fi
  \ifx#1d\dispkind=\@ne\fi
  \ifx#1f\dispkind=\tw@\fi
  \ifx#1t\addtoLaTeXparams{t}\fi
  \ifx#1b\addtoLaTeXparams{b}\fi
  \ifx#1p\addtoLaTeXparams{p}\fi
  \ifx#1h\addtoLaTeXparams{h}\fi
  \ifx#1X\BoxFrametrue\fi
  \ifx#1O\OverFrametrue\fi
  \ifx#1U\UnderFrametrue\fi
  \ifx#1w
    \ifnum\@msidraft=1\wasdrafttrue\else\wasdraftfalse\fi
    \@msidraft=\@ne
  \fi
  \let\next=\readFRAMEparams
  \fi
 \next
 }%
\def\IFRAME#1#2#3#4#5#6{%
      \bgroup
      \let\QCTOptA\empty
      \let\QCTOptB\empty
      \let\QCBOptA\empty
      \let\QCBOptB\empty
      #6%
      \parindent=0pt
      \leftskip=0pt
      \rightskip=0pt
      \setbox0=\hbox{\QCBOptA}%
      \@tempdima=#1\relax
      \ifOverFrame
          \typeout{This is not implemented yet}%
          \show\HELP
      \else
         \ifdim\wd0>\@tempdima
            \advance\@tempdima by \@tempdima
            \ifdim\wd0 >\@tempdima
               \setbox1 =\vbox{%
                  \unskip\hbox to \@tempdima{\hfill\GRAPHIC{#5}{#4}{#1}{#2}{#3}\hfill}%
                  \unskip\hbox to \@tempdima{\parbox[b]{\@tempdima}{\QCBOptA}}%
               }%
               \wd1=\@tempdima
            \else
               \textwidth=\wd0
               \setbox1 =\vbox{%
                 \noindent\hbox to \wd0{\hfill\GRAPHIC{#5}{#4}{#1}{#2}{#3}\hfill}\\%
                 \noindent\hbox{\QCBOptA}%
               }%
               \wd1=\wd0
            \fi
         \else
            \ifdim\wd0>0pt
              \hsize=\@tempdima
              \setbox1=\vbox{%
                \unskip\GRAPHIC{#5}{#4}{#1}{#2}{0pt}%
                \break
                \unskip\hbox to \@tempdima{\hfill \QCBOptA\hfill}%
              }%
              \wd1=\@tempdima
           \else
              \hsize=\@tempdima
              \setbox1=\vbox{%
                \unskip\GRAPHIC{#5}{#4}{#1}{#2}{0pt}%
              }%
              \wd1=\@tempdima
           \fi
         \fi
         \@tempdimb=\ht1
         \advance\@tempdimb by -#2
         \advance\@tempdimb by #3
         \leavevmode
         \raise -\@tempdimb \hbox{\box1}%
      \fi
      \egroup%
}%
\def\DFRAME#1#2#3#4#5{%
  \vspace\topsep
  \hfil\break
  \bgroup
     \leftskip\@flushglue
	 \rightskip\@flushglue
	 \parindent\z@
	 \parfillskip\z@skip
     \let\QCTOptA\empty
     \let\QCTOptB\empty
     \let\QCBOptA\empty
     \let\QCBOptB\empty
	 \vbox\bgroup
        \ifOverFrame 
           #5\QCTOptA\par
        \fi
        \GRAPHIC{#4}{#3}{#1}{#2}{\z@}%
        \ifUnderFrame 
           \break#5\QCBOptA
        \fi
	 \egroup
  \egroup
  \vspace\topsep
  \break
}%
\def\FFRAME#1#2#3#4#5#6#7{%
  \@ifundefined{floatstyle}
    {
     \begin{figure}[#1]%
    }
    {
	 \ifx#1h
      \begin{figure}[H]%
	 \else
      \begin{figure}[#1]%
	 \fi
	}
  \let\QCTOptA\empty
  \let\QCTOptB\empty
  \let\QCBOptA\empty
  \let\QCBOptB\empty
  \ifOverFrame
    #4
    \ifx\QCTOptA\empty
    \else
      \ifx\QCTOptB\empty
        \caption{\QCTOptA}%
      \else
        \caption[\QCTOptB]{\QCTOptA}%
      \fi
    \fi
    \ifUnderFrame\else
      \label{#5}%
    \fi
  \else
    \UnderFrametrue%
  \fi
  \begin{center}\GRAPHIC{#7}{#6}{#2}{#3}{\z@}\end{center}%
  \ifUnderFrame
    #4
    \ifx\QCBOptA\empty
      \caption{}%
    \else
      \ifx\QCBOptB\empty
        \caption{\QCBOptA}%
      \else
        \caption[\QCBOptB]{\QCBOptA}%
      \fi
    \fi
    \label{#5}%
  \fi
  \end{figure}%
 }%
\def\makeactives{
  \catcode`\"=\active
  \catcode`\;=\active
  \catcode`\:=\active
  \catcode`\'=\active
  \catcode`\~=\active
}
   \gdef\activesoff{%
      \def"{\string"}%
      \def;{\string;}%
      \def:{\string:}%
      \def'{\string'}%
      \def~{\string~}%
    }
\def\FRAME#1#2#3#4#5#6#7#8{%
 \bgroup
 \ifnum\@msidraft=\@ne
   \wasdrafttrue
 \else
   \wasdraftfalse%
 \fi
 \def\LaTeXparams{}%
 \dispkind=\z@
 \def\LaTeXparams{}%
 \doFRAMEparams{#1}%
 \ifnum\dispkind=\z@\IFRAME{#2}{#3}{#4}{#7}{#8}{#5}\else
  \ifnum\dispkind=\@ne\DFRAME{#2}{#3}{#7}{#8}{#5}\else
   \ifnum\dispkind=\tw@
    \edef\@tempa{\noexpand\FFRAME{\LaTeXparams}}%
    \@tempa{#2}{#3}{#5}{#6}{#7}{#8}%
    \fi
   \fi
  \fi
  \ifwasdraft\@msidraft=1\else\@msidraft=0\fi{}%
  \egroup
 }%
\def\TEXUX#1{"texux"}
\def\limfunc#1{\mathop{\rm #1}}%
\def\func#1{\mathop{\rm #1}\nolimits}%
\long\def\QQQ#1#2{%
     \long\expandafter\def\csname#1\endcsname{#2}}%
\long\def\QQA#1#2{}%
\def\QTR#1#2{{\csname#1\endcsname {#2}}}%
\def\EXPAND#1[#2]#3{}%
\def\NOEXPAND#1[#2]#3{}%
\def\LaTeXparent#1{}%
\def\ChildStyles#1{}%
\def\ChildDefaults#1{}%
\def\QTagDef#1#2#3{}%
  \providecommand{\UNICODE}[2][]{\protect\rule{.1in}{.1in}}
  \providecommand{\U}[1]{\protect\rule{.1in}{.1in}}
\def\QQfnmark#1{\footnotemark}
 \def\abstract{%
  \if@twocolumn
   \section*{Abstract (Not appropriate in this style!)}%
   \else \small 
   \begin{center}{\bf Abstract\vspace{-.5em}\vspace{\z@}}\end{center}%
   \quotation 
   \fi
  }%
   \def\registered{\relax\ifmmode{}\r@gistered
                    \else$\m@th\r@gistered$\fi}%
 \def\r@gistered{^{\ooalign
  {\hfil\raise.07ex\hbox{$\scriptstyle\rm\text{R}$}\hfil\crcr
  \mathhexbox20D}}}}{}%
\newdimen\theight
\def\newfmtname{LaTeX2e}
  \DeclareOldFontCommand{\rm}{\normalfont\rmfamily}{\mathrm}
  \DeclareOldFontCommand{\sf}{\normalfont\sffamily}{\mathsf}
  \DeclareOldFontCommand{\tt}{\normalfont\ttfamily}{\mathtt}
  \DeclareOldFontCommand{\bf}{\normalfont\bfseries}{\mathbf}
  \DeclareOldFontCommand{\it}{\normalfont\itshape}{\mathit}
  \DeclareOldFontCommand{\sl}{\normalfont\slshape}{\@nomath\sl}
  \DeclareOldFontCommand{\sc}{\normalfont\scshape}{\@nomath\sc}
\def\alpha{{\Greekmath 010B}}%
\def\beta{{\Greekmath 010C}}%
\def\gamma{{\Greekmath 010D}}%
\def\delta{{\Greekmath 010E}}%
\def\epsilon{{\Greekmath 010F}}%
\def\zeta{{\Greekmath 0110}}%
\def\eta{{\Greekmath 0111}}%
\def\theta{{\Greekmath 0112}}%
\def\iota{{\Greekmath 0113}}%
\def\kappa{{\Greekmath 0114}}%
\def\lambda{{\Greekmath 0115}}%
\def\mu{{\Greekmath 0116}}%
\def\nu{{\Greekmath 0117}}%
\def\xi{{\Greekmath 0118}}%
\def\pi{{\Greekmath 0119}}%
\def\rho{{\Greekmath 011A}}%
\def\sigma{{\Greekmath 011B}}%
\def\tau{{\Greekmath 011C}}%
\def\upsilon{{\Greekmath 011D}}%
\def\phi{{\Greekmath 011E}}%
\def\chi{{\Greekmath 011F}}%
\def\psi{{\Greekmath 0120}}%
\def\omega{{\Greekmath 0121}}%
\def\varepsilon{{\Greekmath 0122}}%
\def\vartheta{{\Greekmath 0123}}%
\def\varpi{{\Greekmath 0124}}%
\def\varrho{{\Greekmath 0125}}%
\def\varsigma{{\Greekmath 0126}}%
\def\varphi{{\Greekmath 0127}}%
\def\nabla{{\Greekmath 0272}}
\def\FindBoldGroup{%
   {\setbox0=\hbox{$\mathbf{x\global\edef\theboldgroup{\the\mathgroup}}$}}%
}
\def\Greekmath#1#2#3#4{%
    \if@compatibility
        \ifnum\mathgroup=\symbold
           \mathchoice{\mbox{\boldmath$\displaystyle\mathchar"#1#2#3#4$}}%
                      {\mbox{\boldmath$\textstyle\mathchar"#1#2#3#4$}}%
                      {\mbox{\boldmath$\scriptstyle\mathchar"#1#2#3#4$}}%
                      {\mbox{\boldmath$\scriptscriptstyle\mathchar"#1#2#3#4$}}%
        \else
           \mathchar"#1#2#3#4%
        \fi 
    \else 
        \FindBoldGroup
        \ifnum\mathgroup=\theboldgroup 
           \mathchoice{\mbox{\boldmath$\displaystyle\mathchar"#1#2#3#4$}}%
                      {\mbox{\boldmath$\textstyle\mathchar"#1#2#3#4$}}%
                      {\mbox{\boldmath$\scriptstyle\mathchar"#1#2#3#4$}}%
                      {\mbox{\boldmath$\scriptscriptstyle\mathchar"#1#2#3#4$}}%
        \else
           \mathchar"#1#2#3#4%
        \fi     	    
	  \fi}
\newif\ifGreekBold  \GreekBoldfalse
\let\SAVEPBF=\pbf
\def\pbf{\GreekBoldtrue\SAVEPBF}%
  \newcounter{equationnumber}  
  \def\mathletters{%
     \addtocounter{equation}{1}
     \edef\@currentlabel{\theequation}%
     \setcounter{equationnumber}{\c@equation}
     \setcounter{equation}{0}%
     \edef\theequation{\@currentlabel\noexpand\alph{equation}}%
  }
    \def\BibTeX{{\rm B\kern-.05em{\sc i\kern-.025em b}\kern-.08em
                 T\kern-.1667em\lower.7ex\hbox{E}\kern-.125emX}}}{}%
\def\AmS{{\protect\usefont{OMS}{cmsy}{m}{n}%
                A\kern-.1667em\lower.5ex\hbox{M}\kern-.125emS}}}{}%
\def\@@eqncr{\let\@tempa\relax
    \ifcase\@eqcnt \def\@tempa{& & &}\or \def\@tempa{& &}%
      \else \def\@tempa{&}\fi
     \@tempa
     \if@eqnsw
        \iftag@
           \@taggnum
        \else
           \@eqnnum\stepcounter{equation}%
        \fi
     \fi
     \global\tag@false
     \global\@eqnswtrue
     \global\@eqcnt\z@\cr}
\def\TCItag{\@ifnextchar*{\@TCItagstar}{\@TCItag}}
\def\@TCItag#1{%
    \global\tag@true
    \global\def\@taggnum{(#1)}%
    \global\def\@currentlabel{#1}}
\def\@TCItagstar*#1{%
    \global\tag@true
    \global\def\@taggnum{#1}%
    \global\def\@currentlabel{#1}}
\def\tint{\msi@int\textstyle\int}%
\def\tiint{\msi@int\textstyle\iint}%
\def\tiiint{\msi@int\textstyle\iiint}%
\def\tiiiint{\msi@int\textstyle\iiiint}%
\def\tidotsint{\msi@int\textstyle\idotsint}%
\def\toint{\msi@int\textstyle\oint}%
\def\tbigoplus{\mathop{\textstyle \bigoplus }}%
\def\tbigsqcup{\mathop{\textstyle \bigsqcup }}%
\newtoks\temptoksa
\newtoks\temptoksb
\newtoks\temptoksc
\def\msi@int#1#2{%
 \def\@temp{{#1#2\the\temptoksc_{\the\temptoksa}^{\the\temptoksb}}}%
 \futurelet\@nextcs
 \@int
}
\def\@int{%
   \ifx\@nextcs\limits
      \typeout{Found limits}%
      \temptoksc={\limits}%
	  \let\@next\@intgobble%
   \else\ifx\@nextcs\nolimits
      \typeout{Found nolimits}%
      \temptoksc={\nolimits}%
	  \let\@next\@intgobble%
   \else
      \typeout{Did not find limits or no limits}%
      \temptoksc={}%
      \let\@next\msi@limits%
   \fi\fi
   \@next   
}%
\def\@intgobble#1{%
   \typeout{arg is #1}%
   \msi@limits
}
\def\msi@limits{%
   \temptoksa={}%
   \temptoksb={}%
   \@ifnextchar_{\@limitsa}{\@limitsb}%
}
\def\@limitsa_#1{%
   \temptoksa={#1}%
   \@ifnextchar^{\@limitsc}{\@temp}%
}
\def\@limitsb{%
   \@ifnextchar^{\@limitsc}{\@temp}%
}
\def\@limitsc^#1{%
   \temptoksb={#1}%
   \@ifnextchar_{\@limitsd}{\@temp}%
}
\def\@limitsd_#1{%
   \temptoksa={#1}%
   \@temp
}
\def\dint{\msi@int\displaystyle\int}%
\def\diint{\msi@int\displaystyle\iint}%
\def\diiint{\msi@int\displaystyle\iiint}%
\def\diiiint{\msi@int\displaystyle\iiiint}%
\def\didotsint{\msi@int\displaystyle\idotsint}%
\def\doint{\msi@int\displaystyle\oint}%
\def\GRAPHIC#1#2#3#4#5{%
   \ifnum\@msidraft=\@ne\draftbox{#2}{#3}{#4}{#5}%
   \else\graffile{#2}{#3}{#4}{#5}
   \fi
}
\def\graffile#1#2#3#4{\includegraphics[width=#2,height=#3]{#1}}
\def\ExitTCILatex{\makeatother }
\if@compatibility\message{amsmath already loaded}\fi\aftergroup\ExitTCILatex}
\if@compatibility\message{amstex already loaded}\fi\aftergroup\ExitTCILatex}
\if@compatibility\message{amsgen already loaded}\fi\aftergroup\ExitTCILatex}
\let\DOTSI\relax
\def\RIfM@{\relax\ifmmode}%
\def\FN@{\futurelet\next}%
\def\iint{\DOTSI\intno@\tw@\FN@\ints@}%
\def\iiint{\DOTSI\intno@\thr@@\FN@\ints@}%
\def\iiiint{\DOTSI\intno@4 \FN@\ints@}%
\def\idotsint{\DOTSI\intno@\z@\FN@\ints@}%
\def\ints@{\findlimits@\ints@@}%
\newif\iflimtoken@
\newif\iflimits@
\def\findlimits@{\limtoken@true\ifx\next\limits\limits@true
 \else\ifx\next\nolimits\limits@false\else
 \limtoken@false\ifx\ilimits@\nolimits\limits@false\else
 \ifinner\limits@false\else\limits@true\fi\fi\fi\fi}%
\def\multint@{\int\ifnum\intno@=\z@\intdots@                          
 \else\intkern@\fi                                                    
 \ifnum\intno@>\tw@\int\intkern@\fi                                   
 \ifnum\intno@>\thr@@\int\intkern@\fi                                 
 \int}
\def\multintlimits@{\intop\ifnum\intno@=\z@\intdots@\else\intkern@\fi
 \ifnum\intno@>\tw@\intop\intkern@\fi
 \ifnum\intno@>\thr@@\intop\intkern@\fi\intop}%
\def\intic@{%
    \mathchoice{\hskip.5em}{\hskip.4em}{\hskip.4em}{\hskip.4em}}%
\def\negintic@{\mathchoice
 {\hskip-.5em}{\hskip-.4em}{\hskip-.4em}{\hskip-.4em}}%
\def\ints@@{\iflimtoken@                                              
 \def\ints@@@{\iflimits@\negintic@
   \mathop{\intic@\multintlimits@}\limits                             
  \else\multint@\nolimits\fi                                          
  \eat@}
 \else                                                                
 \def\ints@@@{\iflimits@\negintic@
  \mathop{\intic@\multintlimits@}\limits\else
  \multint@\nolimits\fi}\fi\ints@@@}%
\def\intkern@{\mathchoice{\!\!\!}{\!\!}{\!\!}{\!\!}}%
\def\plaincdots@{\mathinner{\cdotp\cdotp\cdotp}}%
\def\intdots@{\mathchoice{\plaincdots@}%
 {{\cdotp}\mkern1.5mu{\cdotp}\mkern1.5mu{\cdotp}}%
 {{\cdotp}\mkern1mu{\cdotp}\mkern1mu{\cdotp}}%
 {{\cdotp}\mkern1mu{\cdotp}\mkern1mu{\cdotp}}}%
\def\RIfM@{\relax\protect\ifmmode}
\def\text{\RIfM@\expandafter\text@\else\expandafter\mbox\fi}
\let\nfss@text\text
\def\text@#1{\mathchoice
   {\textdef@\displaystyle\f@size{#1}}%
   {\textdef@\textstyle\tf@size{\firstchoice@false #1}}%
   {\textdef@\textstyle\sf@size{\firstchoice@false #1}}%
   {\textdef@\textstyle \ssf@size{\firstchoice@false #1}}%
   \glb@settings}
\def\textdef@#1#2#3{\hbox{{%
                    \everymath{#1}%
                    \let\f@size#2\selectfont
                    #3}}}
\newif\iffirstchoice@
\def\Let@{\relax\iffalse{\fi\let\\=\cr\iffalse}\fi}%
\def\vspace@{\def\vspace##1{\crcr\noalign{\vskip##1\relax}}}%
\def\multilimits@{\bgroup\vspace@\Let@
 \baselineskip\fontdimen10 \scriptfont\tw@
 \advance\baselineskip\fontdimen12 \scriptfont\tw@
 \lineskip\thr@@\fontdimen8 \scriptfont\thr@@
 \lineskiplimit\lineskip
 \vbox\bgroup\ialign\bgroup\hfil$\m@th\scriptstyle{##}$\hfil\crcr}%
\def\Sb{_\multilimits@}%
\def\endSb{\crcr\egroup\egroup\egroup}%
\def\Sp{^\multilimits@}%
\newdimen\ex@
\def\rightarrowfill@#1{$#1\m@th\mathord-\mkern-6mu\cleaders
 \hbox{$#1\mkern-2mu\mathord-\mkern-2mu$}\hfill
 \mkern-6mu\mathord\rightarrow$}%
\def\leftarrowfill@#1{$#1\m@th\mathord\leftarrow\mkern-6mu\cleaders
 \hbox{$#1\mkern-2mu\mathord-\mkern-2mu$}\hfill\mkern-6mu\mathord-$}%
\def\leftrightarrowfill@#1{$#1\m@th\mathord\leftarrow
\mkern-6mu\cleaders
 \hbox{$#1\mkern-2mu\mathord-\mkern-2mu$}\hfill
 \mkern-6mu\mathord\rightarrow$}%
\def\overrightarrow{\mathpalette\overrightarrow@}%
\def\overrightarrow@#1#2{\vbox{\ialign{##\crcr\rightarrowfill@#1\crcr
 \noalign{\kern-\ex@\nointerlineskip}$\m@th\hfil#1#2\hfil$\crcr}}}%
\def\overleftarrow{\mathpalette\overleftarrow@}%
\def\overleftarrow@#1#2{\vbox{\ialign{##\crcr\leftarrowfill@#1\crcr
 \noalign{\kern-\ex@\nointerlineskip}$\m@th\hfil#1#2\hfil$\crcr}}}%
\def\overleftrightarrow{\mathpalette\overleftrightarrow@}%
\def\overleftrightarrow@#1#2{\vbox{\ialign{##\crcr
   \leftrightarrowfill@#1\crcr
 \noalign{\kern-\ex@\nointerlineskip}$\m@th\hfil#1#2\hfil$\crcr}}}%
\def\underrightarrow{\mathpalette\underrightarrow@}%
\def\underrightarrow@#1#2{\vtop{\ialign{##\crcr$\m@th\hfil#1#2\hfil
  $\crcr\noalign{\nointerlineskip}\rightarrowfill@#1\crcr}}}%
\def\underleftarrow{\mathpalette\underleftarrow@}%
\def\underleftarrow@#1#2{\vtop{\ialign{##\crcr$\m@th\hfil#1#2\hfil
  $\crcr\noalign{\nointerlineskip}\leftarrowfill@#1\crcr}}}%
\def\underleftrightarrow{\mathpalette\underleftrightarrow@}%
\def\underleftrightarrow@#1#2{\vtop{\ialign{##\crcr$\m@th
  \hfil#1#2\hfil$\crcr
 \noalign{\nointerlineskip}\leftrightarrowfill@#1\crcr}}}%
\def\qopnamewl@#1{\mathop{\operator@font#1}\nlimits@}
\let\nlimits@\displaylimits
\def\setboxz@h{\setbox\z@\hbox}
\def\varlim@#1#2{\mathop{\vtop{\ialign{##\crcr
 \hfil$#1\m@th\operator@font lim$\hfil\crcr
 \noalign{\nointerlineskip}#2#1\crcr
 \noalign{\nointerlineskip\kern-\ex@}\crcr}}}}
 \def\rightarrowfill@#1{\m@th\setboxz@h{$#1-$}\ht\z@\z@
  $#1\copy\z@\mkern-6mu\cleaders
  \hbox{$#1\mkern-2mu\box\z@\mkern-2mu$}\hfill
  \mkern-6mu\mathord\rightarrow$}
\def\leftarrowfill@#1{\m@th\setboxz@h{$#1-$}\ht\z@\z@
  $#1\mathord\leftarrow\mkern-6mu\cleaders
  \hbox{$#1\mkern-2mu\copy\z@\mkern-2mu$}\hfill
  \mkern-6mu\box\z@$}
\def\projlim{\qopnamewl@{proj\,lim}}
\def\injlim{\qopnamewl@{inj\,lim}}
\def\varinjlim{\mathpalette\varlim@\rightarrowfill@}
\def\varprojlim{\mathpalette\varlim@\leftarrowfill@}
\def\varliminf{\mathpalette\varliminf@{}}
\def\varliminf@#1{\mathop{\underline{\vrule\@depth.2\ex@\@width\z@
   \hbox{$#1\m@th\operator@font lim$}}}}
\def\varlimsup{\mathpalette\varlimsup@{}}
\def\varlimsup@#1{\mathop{\overline
  {\hbox{$#1\m@th\operator@font lim$}}}}
\def\align{\@verbatim \frenchspacing\@vobeyspaces \@alignverbatim
You are using the "align" environment in a style in which it is not defined.}
\let\csname endalign*\endcsname =\endtrivlist
\def\alignat{\@verbatim \frenchspacing\@vobeyspaces \@alignatverbatim
You are using the "alignat" environment in a style in which it is not defined.}
\let\csname endalignat*\endcsname =\endtrivlist
\def\xalignat{\@verbatim \frenchspacing\@vobeyspaces \@xalignatverbatim
You are using the "xalignat" environment in a style in which it is not defined.}
\let\csname endxalignat*\endcsname =\endtrivlist
\def\gather{\@verbatim \frenchspacing\@vobeyspaces \@gatherverbatim
You are using the "gather" environment in a style in which it is not defined.}
\let\csname endgather*\endcsname =\endtrivlist
\def\multiline{\@verbatim \frenchspacing\@vobeyspaces \@multilineverbatim
You are using the "multiline" environment in a style in which it is not defined.}
\let\csname endmultiline*\endcsname =\endtrivlist
\def\arrax{\@verbatim \frenchspacing\@vobeyspaces \@arraxverbatim
You are using a type of "array" construct that is only allowed in AmS-LaTeX.}
\def\tabulax{\@verbatim \frenchspacing\@vobeyspaces \@tabulaxverbatim
You are using a type of "tabular" construct that is only allowed in AmS-LaTeX.}
\let\csname endarrax*\endcsname =\endtrivlist
\let\csname endtabulax*\endcsname =\endtrivlist
 \def\endequation{%
     \ifmmode\ifinner 
      \iftag@
        \addtocounter{equation}{-1} 
        $\hfil
           \displaywidth\linewidth\@taggnum\egroup \endtrivlist
        \global\tag@false
        \global\@ignoretrue   
      \else
        $\hfil
           \displaywidth\linewidth\@eqnnum\egroup \endtrivlist
        \global\tag@false
        \global\@ignoretrue 
      \fi
     \else   
      \iftag@
        \addtocounter{equation}{-1} 
        \eqno \hbox{\@taggnum}
        \global\tag@false%
        $$\global\@ignoretrue
      \else
        \eqno \hbox{\@eqnnum}
        $$\global\@ignoretrue
      \fi
     \fi\fi
 } 
 \newif\iftag@ \tag@false
 \def\TCItag{\@ifnextchar*{\@TCItagstar}{\@TCItag}}
 \def\@TCItag#1{%
     \global\tag@true
     \global\def\@taggnum{(#1)}%
     \global\def\@currentlabel{#1}}
 \def\@TCItagstar*#1{%
     \global\tag@true
     \global\def\@taggnum{#1}%
     \global\def\@currentlabel{#1}}
     \def\tag{\@ifnextchar*{\@tagstar}{\@tag}}
     \def\@tag#1{%
         \global\tag@true
         \global\def\@taggnum{(#1)}}
     \def\@tagstar*#1{%
         \global\tag@true
         \global\def\@taggnum{#1}}
\def\binom#1#2{{#1 \choose #2}}%
\def\tbinom#1#2{{\textstyle {#1 \choose #2}}}%
\def\RM{\rm}
\def\qed{\hfill$\square$\par}
\def\limfunc#1{\mathop{\mathrm{#1}}}
\def\func#1{\mathop{\mathrm{#1}}\nolimits}
\def\diint{\mathop{\int\int}}
\def\dint{\displaystyle\int}
\def\Xint#1{\mathchoice
{\XXint\displaystyle\textstyle{#1}}%
{\XXint\textstyle\scriptstyle{#1}}%
{\XXint\scriptstyle\scriptscriptstyle{#1}}%
{\XXint\scriptscriptstyle\scriptscriptstyle{#1}}%
\!\int}
\def\XXint#1#2#3{{\setbox0=\hbox{$#1{#2#3}{\int}$ }
\vcenter{\hbox{$#2#3$ }}\kern-.6\wd0}}
\def\oint{\Xint-}
\def\toint{\Xint-}
\def\endofall{

\begin{document}

\author{Alexander Grigor'yan \and Yong Lin \and S.-T. Yau \and Haohang Zhang
}
\title{Eigenvalues of the Hodge Laplacian on digraphs}
\date{May 2024}
\maketitle
\tableofcontents

\section{Introduction}

The purpose of this paper is to compute and estimate the eigenvalues of the
Hodge Laplacians on certain classes of finite digraphs (=directed graphs).
For any chain complex%
\begin{equation}
\begin{array}{cccccccc}
\dots & \overset{\partial }{\leftarrow } & \Omega _{p-1} & \overset{\partial
}{\leftarrow } & \Omega _{p} & \overset{\partial }{\leftarrow } & \Omega
_{p+1} & \overset{\partial }{\leftarrow }\dots%
\end{array}
\label{chain}
\end{equation}%
with a boundary operator $\partial $, the Hodge Laplacian $\Delta _{p}$ is
defined as an operator in $\Omega _{p}$ by
\begin{equation*}
\Delta _{p}=\partial \partial ^{\ast }+\partial ^{\ast }\partial ,
\end{equation*}%
where the adjoint operator $\partial ^{\ast }$ is defined with respect to
chosen inner products in linear spaces $\Omega _{p}$.

With any digraph $G$, one can associated a chain complex (\ref{chain}) whose
elements are certain paths in $G$ that go along arrows (see Section \ref%
{sec:review} for a detailed description). This complex that is referred to
as a \emph{path chain complex}, was introduced and investigated in a series
of papers \cite{G2022}, \cite{GLMY2013}, \cite{GLMY2020}, \cite{GLY2024},
\cite{GMY2017} etc. It reflects in a natural way the combinatorial structure
of the underlying digraph and has appropriate functorial properties with
respect to graph operations, such as Cartesian product, join, homotopy, etc.
In particular, the path chain complex satisfies the K\"{u}nneth formulas
with respect to product and join (cf. (\ref{kun}) and (\ref{kunj})).

The main results of this paper include explicit computation of the spectra
of $\Delta _{p},$ for all relevant values of $p$, for some series of
digraphs, in particular, for $n$-simplex, $n$-cube, $n$-torus and $n$-sphere
for any dimension $n$.

The digraphs $n$-simplex and $n$-sphere are defined inductively in $n$ by
using the operation \emph{join} of digraphs (see Section \ref{sec:Dmn} for
definition). As the operator $\Delta _{p}$ satisfies the product rule with
respect to join (Proposition \ref{LemDejoin}), one can use the method of
separation of variables, which in conjunction with K\"{u}nneth formula for
join allows to compute inductively the spectra of $\Delta _{p}$ on $n$%
-simplex and $n$-sphere. These results are stated in Theorem \ref{TspecDnm}.

The digraphs $n$-cube and $n$-torus are defined inductively by using
Cartesian product of digraphs (see Section \ref{SecProduct}). However, the
method of separation of variables \emph{does not work} with the canonical
inner product on spaces $\Omega _{p}$ where each path has by definition the
norm $1$, which makes the task of computing the Hodge spectra on such
digraphs much more complicated.

We have devised a new method for computing Hodge spectra in this setting
that has the following two ingredients.

(I) We have observed that the product rule does work for the so called \emph{%
normalized} Hodge operator, denoted by $\Delta _{p}^{(a)},$ where $a$ refers
to the weight that is used to redefine the inner product in the spaces $%
\Omega _{p}$. Namely, we use the weight where the norm of any path of length
$p$ is equal to
\begin{equation*}
\frac{1}{\sqrt{p!}}.
\end{equation*}%
This together with the K\"{u}nneth formula for product allows us to compute
inductively the spectra of all normalized Hodge operators $\Delta _{p}^{(a)}$
on Cartesian powers (Proposition \ref{Tdeltaa}, Theorem \ref{TspecDeaGn})
including $n$-cubes and $n$-tori (Examples \ref{ExDea}, \ref{ExDeaT}).

(II) We relate in a certain way the spectra of $\Delta _{p}$ and $\Delta
_{p}^{(a)}$ to those of operators $\mathcal{L}_{p}=\partial ^{\ast }\partial
$ also acting on $\Omega _{p}$ (Proposition \ref{prop:SpecDecmp}, Lemma \ref%
{lem:specDp''}, Corollary \ref{CorSpecDea}). Knowing the spectra of $\Delta
_{p}^{(a)}$ for all values of $p$, we compute the spectra of $\mathcal{L}%
_{p} $ and then the spectra of $\Delta _{p}.$ This program is fulfilled in
Theorems \ref{thm:specGn}, \ref{TspecDepIn}, \ref{TspecDepTn}, thus yielding
the spectra of all operators $\Delta _{p}$ on all $n$-cubes and $n$-tori.

The paper is structured as follows. In Section~\ref{sec:review} we provide
an overview of the notions of path chain complex, path homology, products of
paths and digraphs.

In Section \ref{sec:eigDecmp} we establish the relations between the spectra
of $\Delta _{p}$ and $\mathcal{L}_{p}$.

In Section \ref{sec:upperBound} we prove an upper bound for the spectrum of $%
\Delta _{1}$ in terms of combinatorial quantities (Theorem \ref{Tlamax}),
using the results of Section \ref{sec:eigDecmp}.

In Section \ref{sec:WHL} we introduce the weighted Hodge Laplacian and
obtain its general properties. In Section \ref{sec:nHodge} we define the
normalized Hodge Laplacian and apply the method of separation of variables
in abstract form.

Sections \ref{SecHodgeProduct} and \ref{sec:cube} contain the central
results of this paper. In Theorem \ref{TspecDeaGn} we obtain a rather
general formula for the spectrum of $\Delta _{p}^{(a)}$ on Cartesian powers
of digraphs, while in Theorems \ref{TspecDepIn}, \ref{TspecDepTn} we combine
this with the results of Section \ref{sec:eigDecmp} in order to compute the
spectra of $\Delta _{p}$ on $n$-cubes and $n$-tori.

Section \ref{sec:Dmn} contains the aforementioned results about spectra on
joins. In particular, Theorem \ref{TspecDnm} gives the spectra of $\Delta
_{p}$ on $n$-simplices, $n$-spheres and many other digraphs.

\textbf{Acknowledgments.} AG was funded by the Deutsche
Forschungsgemeinschaft (DFG, German Research Foundation) - Project-ID
317210226 - SFB 1283. YL was supported by NSFC, no.12071245. The work was
partially done during a visit of AG to YMSC, Tsinghua University. The
hospitality and support of YMSC is gratefully acknowledged.

\section{Path chain complex}

\label{sec:review}\setcounter{equation}{0}In this section we revise the
notion of a path chain complex on digraphs and some properties. The details
can be found in \cite{G2022}, \cite{GLMY2013}, \cite{GMY2017}, \cite%
{GLMY2020}.

\subsection{Paths and boundary operator}

Let $V$ be a finite set, whose elements are called vertices. For any $p\geq
0 $, an \textit{elementary }$p$\textit{-path} is any sequence $%
\{i_{k}\}_{k=0}^{p}$ of $p+1$ vertices in $V$, which will be denoted by $%
e_{i_{0}\dots i_{p}}=i_{0}\dots i_{p}$. Define $\Lambda _{p}=\Lambda _{p}(V)$
as a linear space of all formal linear combinations of $e_{i_{0}...i_{p}}$
with coefficients from $\mathbb{R}$, that is,
\begin{equation*}
\Lambda _{p}:=\func{span}_{\mathbb{R}}\{e_{i_{0}\cdots i_{p}}\mid
i_{0},\ldots ,i_{p}\in V\}.
\end{equation*}%
The boundary operator $\partial _{p}:\Lambda _{p}\rightarrow \Lambda _{p-1}$
is defined as
\begin{equation*}
\partial _{p}e_{i_{0}\cdots i_{p}}=\sum_{k=0}^{p}(-1)^{k}e_{i_{0}\cdots
\widehat{i}_{k}\cdots i_{p}},
\end{equation*}%
where the notation $\widehat{i}_{k}$ indicates the omission of the index $%
i_{k}$. Set also $\Lambda _{-1}=\left\{ 0\right\} $ and $\partial
_{0}e_{i}=0 $ for any $0$-path.

Normally $\partial _{p}$ will also be denoted by $\partial $ omitting the
subscript $p$. It is a fundamental property that $\partial ^{2}=0$; thus,
the pair $(\Lambda _{\ast },\partial )$ constitutes a chain complex.

An elementary path $i_{0}\dots i_{p}$ is called \textit{regular} if $%
i_{k-1}\neq i_{k}$ for all $k=1,\ldots ,p$, and as \textit{non-regular}
otherwise. The set of all regular elementary $p$-paths is denoted by $%
R_{p}=R_{p}(V)$, and the set of non-regular $p$-paths is defined as
\begin{equation*}
\mathcal{N}_{p}=\mathcal{N}_{p}(V):=\func{span}\{e_{i_{0}...i_{p}}\mid
i_{0}...i_{p}\notin R_{p}\}.
\end{equation*}%
Since $\partial \mathcal{N}_{p}\subset \mathcal{N}_{p-1}$, the boundary
operator $\partial $ is well-defined on the quotient space $\mathcal{R}%
_{p}:=\Lambda _{p}/\mathcal{N}_{p}$, thus giving rise to a chain complex $%
\left( \mathcal{R}_{\ast },\partial \right) .$ The elements of $\mathcal{R}%
_{p}$ are called \emph{regularized} $p$-paths.

A $p$-path is called regular if it is a linear combination of elementary
regular $p$-paths. It is easy to see that each regularized $p$-path as an
equivalence class contains exactly one regular $p$-path; hence, for
simplicity of notation we identify regularized $p$-paths with regular $p$%
paths.

\subsection{Path chain complex on digraph}

\label{SecChainComplex}Let $G=\left( V,E\right) $ be a digraph (=directed
graph), where $V$ is a finite set of vertices and $E$ is the set of arrows,
that is, ordered pairs $\left( i,j\right) $ where $i,j$ are distinct
vertices. An arrow in $E$ will be denoted by $i\rightarrow j$.

We say that an elementary $p$-path $i_{0}\dots i_{p}$ is \textit{allowed} if
$i_{k-1}\rightarrow i_{k}$ for all $k=1,\ldots ,p$. Let $A_{p}=A_{p}(G)$ be
the set of all allowed elementary $p$-paths. A $p$-path is called allowed if
it is a linear combination of allowed elementary $p$-paths. Denote by $%
\mathcal{A}_{p}=\mathcal{A}_{p}(G)$ the linear space of all allowed $p$%
-paths, that is,
\begin{equation*}
\mathcal{A}_{p}=\mathcal{A}_{p}(G):=\func{span}\{A_{p}\}.
\end{equation*}%
It is clear that $\mathcal{A}_{p}\subset \mathcal{R}_{p}$ but the boundary
operator $\partial $ does not have to take allowed paths to allowed.

\begin{definition}
A path $u\in \mathcal{R}_{p}$ is called $\partial $-invariant if $u\in
\mathcal{A}_{p}$ and $\partial u\in \mathcal{A}_{p-1}$.
\end{definition}

Denote by $\Omega _{p}=\Omega _{p}(G)$ the linear space of all $\partial $%
-invariant $p$-paths, that is,
\begin{equation*}
\Omega _{p}=\Omega _{p}(G):=\{u\in \mathcal{A}_{p}\mid \partial u\in
\mathcal{A}_{p-1}\}.
\end{equation*}%
It is easy to see that $\partial \Omega _{p}\subset \Omega _{p-1}$ so that
we obtain a chain complex $\left\{ \Omega _{\ast },\partial \right\} $.

\begin{definition}
The chain complex%
\begin{equation}
\begin{array}{cccccccccccc}
0 & \overset{\partial }{\leftarrow } & \Omega _{0} & \overset{\partial }{%
\leftarrow } & \Omega _{1} & \overset{\partial }{\leftarrow } & \dots &
\overset{\partial }{\leftarrow } & \Omega _{p-1} & \overset{\partial }{%
\leftarrow } & \Omega _{p} & \overset{\partial }{\leftarrow }\dots%
\end{array}%
,  \label{maing}
\end{equation}%
is called the path chain complex of the digraph $G$.
\end{definition}

Observe that
\begin{equation*}
\Omega _{0}=\mathcal{A}_{0}=\mathcal{R}_{0}=\Lambda _{0}=\limfunc{span}%
\left\{ e_{i}:i\in V\right\}
\end{equation*}%
and%
\begin{equation*}
\Omega _{1}=\mathcal{A}_{1}=\limfunc{span}\left\{ e_{ij}:i\rightarrow
j\right\} .
\end{equation*}%
In general, $\Omega _{p}$ may be a proper subspace of $\mathcal{A}_{p}$ for $%
p\geq 2$.

We say that three distinct vertices $a,b,c$ of a digraph form a \emph{%
triangle} if $i\rightarrow j\rightarrow k\ $and $i\rightarrow k$. Any
triangle determines a $\partial $-invariant $2$-path $e_{ijk}$ as $%
e_{ijk}\in \mathcal{A}_{2}$ and $\partial e_{ijk}=e_{jk}-e_{ik}+e_{ij}\in
\mathcal{A}_{1}.$ Such a path $e_{ijk}$ is also referred to as a triangle.

We say that four distinct vertices $i,j,j^{\prime },k$ of a digraph form a
\emph{square} if $i\rightarrow j\rightarrow k$, $i\rightarrow j^{\prime
}\rightarrow k$ but $i\not\rightarrow k$. Any square determines a $\partial $%
-invariant $2$-path%
\begin{equation*}
u=e_{ijk}-e_{ij^{\prime }k}
\end{equation*}%
as $u\in \mathcal{A}_{2}$ and $\partial u=e_{jk}+e_{ij}-e_{j^{\prime
}k}-e_{ij^{\prime }}\in \mathcal{A}_{1}$. The path $u$ is also called a
square.

We say that two distinct vertices $i$ and $j$ form a \emph{double arrow} if $%
i\rightarrow j\rightarrow i$. A double arrow determines $\partial $%
-invariant $2$-paths $e_{iji}$ and $e_{jij},$ that are also called double
arrows.

These three configurations are shown on Fig.~\ref{pic4}.\FRAME{ftbphFU}{%
3.6089in}{1.0672in}{0pt}{\Qcb{A triangle, a square and a double arrow}}{\Qlb{%
pic4}}{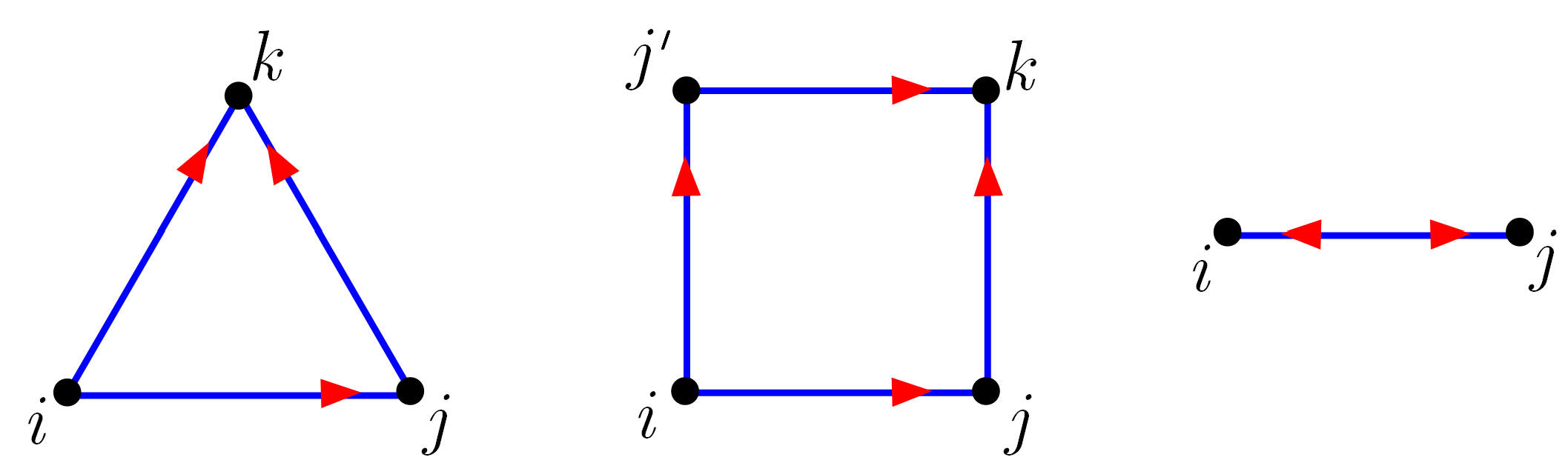}{\special{language "Scientific Word";type
"GRAPHIC";maintain-aspect-ratio TRUE;display "USEDEF";valid_file "F";width
3.6089in;height 1.0672in;depth 0pt;original-width 9.7914in;original-height
2.8746in;cropleft "0";croptop "1";cropright "1";cropbottom "0";filename
'pic4.png';file-properties "XNPEU";}}

\begin{proposition}
\label{POm2}\emph{(\cite[Proposition~2.9]{GLMY2014}, \cite[Theorem 1.8]%
{G2022})} The space $\Omega _{2}(G)$ is spanned by all double arrows,
triangles, and squares. In particular, if $G$ contains neither double nor
triangles nor squares then $\Omega _{2}(G)=\left\{ 0\right\} .$
\end{proposition}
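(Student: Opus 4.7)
The plan is to prove the proposition by analyzing, in terms of the coefficients of an allowed $2$-path, exactly what the $\partial$-invariance condition demands, and then reading off the three spanning configurations from that analysis.

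First I would recall that a general allowed elementary $2$-path is of the form $e_{ijk}$ with $i\to j\to k$, $i\neq j$ and $j\neq k$ (so that the path is regular), while $i$ and $k$ may or may not coincide. I would classify such paths into three disjoint types depending on the relation between $i$ and $k$: the \emph{double arrow} type ($i=k$), the \emph{triangle} type ($i\neq k$ and $i\to k$), and the \emph{semi-edge} type ($i\neq k$ and $i\not\to k$). Given $u\in \Omega_2(G)\subset \mathcal{A}_2$, I would write
\[
u \;=\; u_{d}+u_{t}+u_{s},
\]
splitting $u$ according to this classification of the supporting elementary paths.

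Next I would compute $\partial u$ using $\partial e_{ijk}=e_{jk}-e_{ik}+e_{ij}$. For $u_d$ (so $i=k$) the middle term is $e_{ii}=0$ in $\mathcal{R}_1$, hence $\partial e_{iji}=e_{ji}+e_{ij}\in\mathcal{A}_1$, and every double arrow lies in $\Omega_2$. For $u_t$ (triangle case) all three terms $e_{jk},e_{ik},e_{ij}$ are allowed, so every triangle lies in $\Omega_2$. It follows that $u_d$ and $u_t$ are automatically $\partial$-invariant, hence so is $u_s=u-u_d-u_t$, and the only nontrivial work is to show that $u_s$ is a linear combination of squares.

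For $u_s$, the terms $e_{jk}$ and $e_{ij}$ in $\partial e_{ijk}$ are always in $\mathcal{A}_1$; the term $e_{ik}$ is the only potentially non-allowed one, and for semi-edge terms it fails to be allowed. Collecting coefficients of a fixed non-arrow pair $e_{ik}$ (with $i\neq k$, $i\not\to k$), the condition $\partial u_s\in \mathcal{A}_1$ forces, for every such pair,
\[
\sum_{j\,:\,i\to j\to k} c_{ijk} \;=\; 0,
\]
where $u_s=\sum c_{ijk}\,e_{ijk}$. This is the main (and only slightly nontrivial) step. Fix such a pair $(i,k)$ and let $J_{ik}=\{j:i\to j\to k\}$; the set of coefficient vectors $(c_{ijk})_{j\in J_{ik}}$ satisfying the vanishing sum condition is a subspace of dimension $|J_{ik}|-1$. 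I would exhibit an explicit basis by choosing any $j_0\in J_{ik}$ and taking the differences $e_{ijk}-e_{ij_0k}$ for $j\in J_{ik}\setminus\{j_0\}$, each of which is exactly a square in the sense of the definition preceding the proposition. Summing the contributions over all such pairs $(i,k)$ expresses $u_s$ as a linear combination of squares, completing the proof.

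The conceptual obstacle is very mild: the essential observation is simply that the $\partial$-invariance condition decouples into one linear equation per non-arrow pair $(i,k)$, and that the kernel of each such equation is canonically spanned by squares. The final sentence of the proposition (that $\Omega_2(G)=\{0\}$ when no double arrows, triangles, or squares exist) then falls out immediately, since in that case each of $u_d,u_t$ is $0$ and each $J_{ik}$ has at most one element, forcing $u_s=0$ as well.
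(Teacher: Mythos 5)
Your proof is correct. Note that the paper itself does not prove this proposition---it is quoted from \cite[Proposition~2.9]{GLMY2014} and \cite[Theorem 1.8]{G2022}---and your argument (splitting $u$ by the type of the pair $(i,k)$, observing that only the middle term $e_{ik}$ of $\partial e_{ijk}$ can fail to be allowed, and identifying the kernel of the resulting one-equation-per-non-arrow-pair system as the span of the squares $e_{ijk}-e_{ij_0k}$) is essentially the standard proof given in those references. The only point worth making explicit is that for $j\neq j_0$ in $J_{ik}$ the four vertices $i,j,j_0,k$ are automatically distinct, so each difference really is a square in the sense defined before the proposition; this is immediate since arrows join distinct vertices and $i\neq k$ in the semi-edge case.
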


Note that all double arrows and triangles are linearly independent, while
squares could be dependent. Indeed, consider a $m$-\emph{square}, that is, a
sequence of $m+3$ distinct vertices $\ a,\ b_{0},b_{1},...,b_{m},\ c$ such
that $a\rightarrow b_{k}\rightarrow c\ \ $for all $k=0,\dots ,m,$ while $%
a\not\rightarrow c$ (see Fig. \ref{spider}).

\FRAME{ftbphFU}{3.1903in}{1.1251in}{0pt}{\Qcb{A multisquare}}{\Qlb{spider}}{%
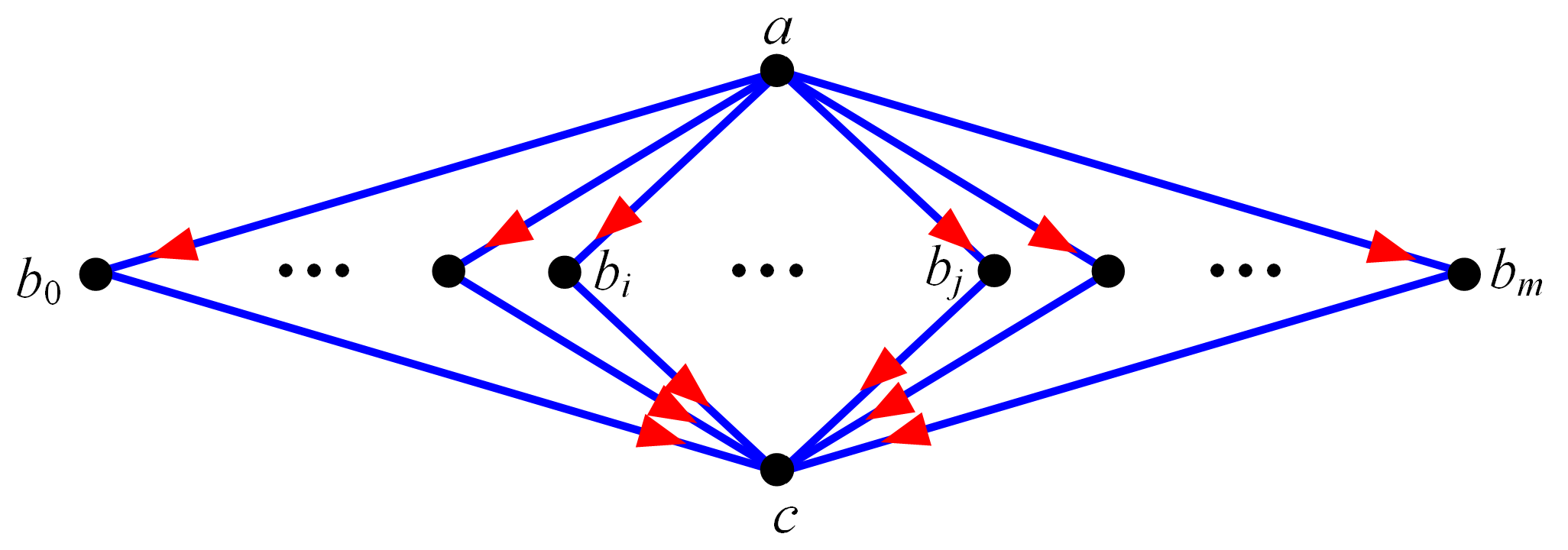}{\special{language "Scientific Word";type
"GRAPHIC";maintain-aspect-ratio TRUE;display "USEDEF";valid_file "F";width
3.1903in;height 1.1251in;depth 0pt;original-width 9.5in;original-height
3.333in;cropleft "0";croptop "1";cropright "1";cropbottom "0";filename
'spider.png';file-properties "XNPEU";}}

For example, $1$-square is a square. If $m\geq 2$ then $m$-square is also
called \emph{multisquare}. An $m$-square determines the following squares%
\begin{equation*}
u_{ij}=e_{ab_{i}c}-e_{ab_{j}c}\in \Omega _{2}\ \ \ \text{for all }%
i,j=0,...,m,
\end{equation*}%
and among them only $m$ linearly independent (for example, $\left\{
u_{0j}\right\} _{i=1}^{m})$. If $G$ contains no multisquares then all double
arrows, triangles and squares form a basis in $\Omega _{2}$.

\begin{proposition}
\label{Ppq}\emph{\cite[Proposition~3.23]{GLMY2014}} If $\Omega
_{q}(G)=\left\{ 0\right\} $ for some $q$, then $\Omega _{p}(G)=\left\{
0\right\} $ also for all $p>q$.
\end{proposition}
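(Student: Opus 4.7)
The plan is to proceed by induction on $p$, reducing the claim to the single step: if $\Omega_q(G) = \{0\}$, then $\Omega_{q+1}(G) = \{0\}$. Fix $u \in \Omega_{q+1}(G)$ and write $u = \sum_\alpha c^\alpha e_\alpha$ over elementary $(q+1)$-paths $\alpha = (\alpha_0, \ldots, \alpha_{q+1})$, with $c^\alpha = 0$ whenever $\alpha$ is non-allowed (which is forced since $u \in \mathcal{A}_{q+1}$). For each vertex $a \in V$ introduce the auxiliary $q$-path
$$T_a u := \sum_{i_1, \ldots, i_{q+1}} c^{a\, i_1 \cdots i_{q+1}}\, e_{i_1 \cdots i_{q+1}} \;\in\; \mathcal{A}_q(G),$$
obtained by isolating the terms of $u$ starting at $a$ and dropping this initial vertex. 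The strategy is to show $T_a u \in \Omega_q$ for every $a$, invoke the inductive hypothesis $\Omega_q = \{0\}$, and conclude $u = 0$ because every allowed path has some first vertex.

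The crucial step is the verification $\partial T_a u \in \mathcal{A}_{q-1}$. For an elementary $(q-1)$-path $\gamma = (\gamma_0, \ldots, \gamma_{q-1})$ I will compare the coefficient $(\partial T_a u)^\gamma$ to $(\partial u)^{\tilde\gamma}$ where $\tilde\gamma := (a, \gamma_0, \ldots, \gamma_{q-1})$. Expanding both via the definition of $\partial$ (the coefficient on an elementary path equals an alternating sum over all insertions of a vertex $v$ at each position) and reindexing the insertion index by $k \mapsto k-1$ to match the shift caused by prepending $a$, the $k \geq 1$ portion of $(\partial u)^{\tilde\gamma}$ becomes exactly $-(\partial T_a u)^\gamma$, leaving the identity
$$(\partial T_a u)^\gamma \;=\; \sum_{v \in V} c^{v\, a\, \gamma_0 \cdots \gamma_{q-1}} \;-\; (\partial u)^{\tilde\gamma}.$$
If $\gamma$ is non-allowed, some arrow $\gamma_{\ell-1}\to\gamma_\ell$ is missing, and this same missing arrow appears in $\tilde\gamma$ and in every extended sequence $(v, a, \gamma_0, \ldots, \gamma_{q-1})$. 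Hence $c^{v\, a\, \gamma_0 \cdots \gamma_{q-1}} = 0$ by allowedness of $u$, and $(\partial u)^{\tilde\gamma} = 0$ because $\partial u \in \mathcal{A}_q$ from $u \in \Omega_{q+1}$. Both terms vanish, so $(\partial T_a u)^\gamma = 0$ on every non-allowed $\gamma$, proving $T_a u \in \Omega_q$.

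By the inductive hypothesis $T_a u = 0$ for every $a \in V$. Since the regular elementary paths $e_{i_1 \cdots i_{q+1}}$ form a linearly independent family in $\mathcal{A}_q$, this forces $c^{a\, i_1 \cdots i_{q+1}} = 0$ for every choice of $a$ and $(i_1, \ldots, i_{q+1})$, whence $u = 0$. The only delicate point is the coefficient bookkeeping in the displayed identity: prepending $a$ shifts every insertion position by one and introduces an overall minus sign, so the reindexing between the $k \geq 1$ part of $(\partial u)^{\tilde\gamma}$ and the full sum defining $(\partial T_a u)^\gamma$ must be carried out with care to land on the correct sign in front of $(\partial u)^{\tilde\gamma}$.
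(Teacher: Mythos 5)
Your argument is correct. Note that the paper does not actually prove Proposition \ref{Ppq} --- it is quoted from \cite{GLMY2014} without proof --- so there is no in-paper argument to compare against; your truncation operator $T_a$ is essentially the standard device from that reference (the observation that if $u\in \Omega_{p}$ then $\sum u^{a i_{1}\cdots i_{p}}e_{i_{1}\cdots i_{p}}\in \Omega_{p-1}$ for every vertex $a$), and your sign bookkeeping checks out: prepending $a$ shifts each insertion position $k$ of $\gamma$ to position $k+1$ of $\tilde\gamma$, so the $k\geq 1$ part of $(\partial u)^{\tilde\gamma}$ equals $-(\partial T_a u)^{\gamma}$ while the $k=0$ part contributes $\sum_{v}c^{v a \gamma_0\cdots\gamma_{q-1}}$, which is exactly your displayed identity. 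One point you should make explicit is the degenerate case $a=\gamma_0$, where $\tilde\gamma$ is non-regular and the identity as written is not meaningful. The conclusion still holds there: every insertion at a position $k\geq 1$ produces a path beginning with $aa$, whose coefficient vanishes since allowed paths are regular, and the surviving $k=0$ terms are $c^{a v a \gamma_1\cdots\gamma_{q-1}}$, a sequence that still contains every consecutive pair of $\gamma$ and hence has vanishing coefficient whenever $\gamma$ is non-allowed. With that caveat spelled out, the induction closes and the proof is complete.
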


\subsection{Path homology}

The chain complex (\ref{maing}) determines the homology groups
\begin{equation*}
H_{p}=H_{p}(G):=\ker \partial _{p}/\func{Im}\partial _{p+1}
\end{equation*}%
that are called the \emph{path homology groups} of $G.$ The dimensions $\dim
H_{p}$ are called the Betti numbers of $G$. It is possible to prove that $%
\dim H_{0}$ is equal to the number of (undirected) connected components of $%
G $ (\cite[Proposition 1.5]{G2022}).

If the sequence $\left\{ \Omega _{p}\right\} _{p\geq 0}$ is finite, that is,
$\Omega _{q}=\left\{ 0\right\} $ for all large enough $q$, then we define
the Euler characteristic of $G$ by%
\begin{equation*}
\chi (G)=\sum_{k=0}^{\infty }\left( -1\right) ^{p+1}\dim \Omega _{p}.
\end{equation*}%
In this case, we have also%
\begin{equation*}
\chi (G)=\sum_{k=0}^{\infty }\left( -1\right) ^{p+1}\dim H_{p}.
\end{equation*}

\subsection{Product of paths and digraphs}

\label{SecProduct}Let $X,Y$ be two finite sets. Consider their product
\begin{equation*}
Z=X\times Y=\left\{ \left( x,y\right) :x\in X,y\in Y\right\} .
\end{equation*}%
Let $z=z_{0}z_{1}...z_{r}$ be a regular elementary $r$-path on $Z$, where $%
z_{k}=\left( a_{k},b_{k}\right) $ with $a_{k}\in X$ and $b_{k}\in Y$. We say
that $z$ is \emph{stair-like} if, for any $k=1,...,r$, either $a_{k-1}=a_{k}$
or $b_{k-1}=b_{k}$ is satisfied. That is, any couple $z_{k-1}z_{k}$ of
consecutive vertices is either \emph{vertical} (when $a_{k-1}=a_{k}$) or
\emph{horizontal} (when $b_{k-1}=b_{k}$).

For given elementary regular paths $x$ on $X$ and $y$ on $Y$, denote by $%
\Sigma _{x,y}$ the set of all stair-like paths $z$ on $Z$ whose projections
on $X$ and $Y$ are $x$ and $y$, respectively. For any path $z\in \Sigma
_{x,y}$ define its \emph{staircase} $S(z)$ as the image in $\mathbb{Z}%
_{+}^{2}$ as illustrated on Fig.~\ref{pic22}, and the \emph{elevation} $L(z)$
as the number of cells in $\mathbb{Z}_{+}^{2}$ below $S(z)$.

\begin{definition}
\cite[Section~3.1]{G2022} Define the \emph{cross product} of elementary
regular paths $e_{x}\in R_{\ast }(X)$ and $e_{y}\in R_{\ast }(Y)$ as a path $%
e_{x}\times e_{y}$ on $Z$ as follows:
\begin{equation*}
e_{x}\times e_{y}:=\sum_{z\in \Sigma _{x,y}}(-1)^{L(z)}e_{z},
\end{equation*}%
and extend it by linearity to all $u\in \mathcal{R}_{p}\left( X\right) $ and
$v\in \mathcal{R}_{q}\left( Y\right) $ so that $u\times v\in \mathcal{R}%
_{p+q}\left( Z\right) $.
\end{definition}

\FRAME{ftbphFU}{4.7928in}{1.6985in}{0pt}{\Qcb{A stair-like path $z\in \Sigma
_{x,y}$, its staircase $S(z)$ and its elevation $L(z)$.}}{\Qlb{pic22}}{%
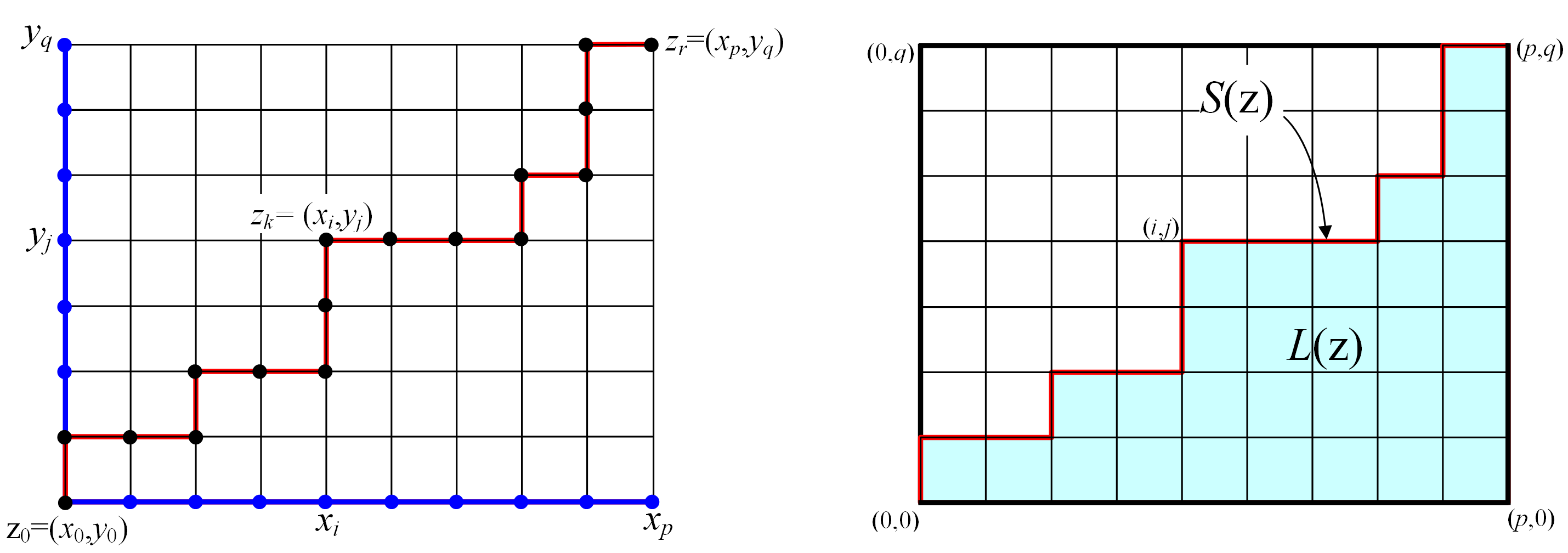}{\special{language "Scientific Word";type
"GRAPHIC";maintain-aspect-ratio TRUE;display "USEDEF";valid_file "F";width
4.7928in;height 1.6985in;depth 0pt;original-width 13.236in;original-height
4.6665in;cropleft "0";croptop "1";cropright "1";cropbottom "0";filename
'pic22.png';file-properties "XNPEU";}}

\begin{proposition}
\emph{\cite[Proposition~4.4]{GMY2017}} If $u\in \mathcal{R}_{p}(X)$ and $%
v\in \mathcal{R}_{q}(Y)$ where $p,q\geq 0$, then
\begin{equation}
\partial (u\times v)=\partial u\times v+(-1)^{p}u\times \partial v.
\label{pr}
\end{equation}
\end{proposition}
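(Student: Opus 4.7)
My plan is to reduce the identity to elementary paths and then verify it by an explicit combinatorial computation, tracking signs carefully. By bilinearity of both sides in $u$ and $v$, it suffices to prove \eqref{pr} when $u = e_x$ for a regular elementary $p$-path $x = x_0\cdots x_p$ on $X$ and $v = e_y$ for a regular elementary $q$-path $y = y_0\cdots y_q$ on $Y$. Set $r = p+q$ and, for each $z \in \Sigma_{x,y}$, write $z^{(k)}$ for the elementary $(r-1)$-path obtained by omitting the vertex $z_k$. Expanding the left-hand side via the definitions gives
\[
\partial(e_x \times e_y) \;=\; \sum_{z \in \Sigma_{x,y}}\sum_{k=0}^{r} (-1)^{L(z)+k}\, e_{z^{(k)}}.
\]

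The key step is to classify the pairs $(z,k)$ according to the local shape of $z$ at index $k$. Call an interior index $k\in\{1,\dots,r-1\}$ \emph{smooth} if the two steps $z_{k-1}z_k$ and $z_k z_{k+1}$ point in the same direction (both vertical or both horizontal), and call it a \emph{corner} otherwise. For corner interior vertices the path $z^{(k)}$ has a diagonal step and is therefore no longer stair-like; on the other hand, at each such corner there is a canonical partner stair-like path $z'\in\Sigma_{x,y}$ obtained by exchanging the vertical and horizontal steps at positions $k-1,k$. Then $(z')^{(k)} = z^{(k)}$ and the staircases $S(z)$ and $S(z')$ differ by exactly one unit cell, so $L(z') = L(z)\pm 1$. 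Hence the contributions of the two pairs $(z,k)$ and $(z',k)$ cancel, and after this pairwise cancellation only the smooth interior and endpoint terms survive.

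The remaining terms are exactly those where removing $z_k$ leaves a stair-like path on $Z$ whose $X$-projection is either $x$ with some vertex removed (if the deleted vertex sits on a horizontal run of $z$, or at an endpoint with a horizontal neighboring step) or $y$ with some vertex removed (otherwise). Sorting these surviving terms by whether they correspond to omitting a vertex $x_i$ of $x$ or a vertex $y_j$ of $y$, I would show that the first group assembles into $\sum_i(-1)^i\,e_{x_0\cdots\widehat{x_i}\cdots x_p}\times e_y = \partial e_x \times e_y$, while the second group assembles into $(-1)^p\,e_x\times\partial e_y$. The factor $(-1)^p$ in the second group arises because, in any stair-like $z$ contributing to an omission of some $y_j$, all $p$ horizontal steps occur (in aggregate) to the left of the omitted vertex from the point of view of the index shift, producing an extra sign of $(-1)^p$ when one compares the index $k$ of $z_k$ in $z$ with the index $j$ of $y_j$ in $y$.

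The main obstacle is the bookkeeping of signs: one must verify both that $(-1)^{L(z)+k}+(-1)^{L(z')+k} = 0$ for every corner pair (which reduces to checking that swapping a vertical-horizontal corner into a horizontal-vertical corner changes the elevation by exactly one), and that for the surviving smooth and endpoint terms the accumulated sign equals $(-1)^{L(z^{(k)})+i}$ (respectively $(-1)^{p+L(z^{(k)})+j}$) so that they reassemble correctly into the definitions of $\partial e_x \times e_y$ and $(-1)^p e_x \times \partial e_y$. Once these two sign checks are carried out, the identity \eqref{pr} follows immediately.
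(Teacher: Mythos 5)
The paper does not prove this proposition at all---it is quoted from \cite[Proposition~4.4]{GMY2017}---so there is no in-paper argument to compare against; your plan is essentially the standard proof of that reference (expand over $\Sigma_{x,y}$, cancel the corner terms in pairs, and match the surviving terms with the two summands on the right). The skeleton is sound: the corner pairing is a genuine involution with $(z')^{(k)}=z^{(k)}$ and $L(z')=L(z)\pm 1$, so those terms do cancel, and the remaining terms are in bijection with the pairs $(i,w)$, $w\in\Sigma_{x_0\cdots\widehat{x_i}\cdots x_p,\,y}$, respectively $(j,w)$, $w\in\Sigma_{x,\,y_0\cdots\widehat{y_j}\cdots y_q}$ (terms with $z_{k-1}=z_{k+1}$ vanish on both sides since we work in the regularized spaces $\mathcal{R}_{\ast}$).

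The one concrete misstep is your explanation of the factor $(-1)^p$. If $z_k$ sits inside a vertical run and its removal corresponds to omitting $y_j$, then $k=i+j$, where $i$ is the number of horizontal steps preceding $z_k$; the index shift therefore contributes only $(-1)^{k-j}=(-1)^{i}$, not $(-1)^{p}$, and your claim that ``all $p$ horizontal steps occur to the left of the omitted vertex'' is simply false in general (e.g.\ for $k=0$ with a vertical first step one has $i=0$). The missing sign comes from the fact that $L(z^{(k)})\neq L(z)$: deleting a vertical unit step of the staircase at abscissa $i$ removes one cell from each of the $p-i$ columns to its right, so $L(z)-L(z^{(k)})=p-i$, and combining $(-1)^{i}$ with $(-1)^{p-i}$ yields the required $(-1)^{p+j+L(z^{(k)})}$. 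Symmetrically, deleting a horizontal unit step at height $j$ gives $L(z)-L(z^{(k)})=j$, which combines with $(-1)^{k-i}=(-1)^{j}$ to give $(-1)^{i+L(z^{(k)})}$, as needed for the first summand. With the elevation change accounted for in this way (and with a sentence confirming that reinserting the omitted vertex gives the inverse bijection), your two deferred ``sign checks'' go through and the argument is complete.
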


Let now $X$ and $Y$ be two digraphs (we denote their sets of vertices by the
same letters $X$ and $Y$, respectively). Define their Cartesian product $%
Z=X\square Y$ as a digraph with the set of vertices $X\times Y$, and the
arrows in $Z$ are defined as follow:%
\begin{equation*}
(x_{1},y_{1})\rightarrow (x_{2},y_{2})\ \ \Leftrightarrow \text{\ \ }%
x_{1}=x_{2}\ \text{and\ }y_{1}\rightarrow y_{2}\ \ \text{or\ \ }%
x_{1}\rightarrow x_{2}\ \text{and\ }y_{1}=y_{2},
\end{equation*}%
as is shown on the diagram:%
\begin{equation*}
\begin{array}{cccccc}
^{y_{2}}\bullet & \dots & \overset{\left( x_{1},y_{2}\right) }{\bullet } &
\rightarrow & \overset{\left( x_{2},y_{2}\right) }{\bullet } & \dots \\
\ \ \ \ \uparrow \ \  &  & \uparrow &  & \uparrow &  \\
^{y_{1}}\bullet & \dots & \overset{\left( x_{1},y_{1}\right) }{\bullet } &
\rightarrow & \overset{\left( x_{2},y_{1}\right) }{\bullet } & \dots \\
\ \ \ \ \ \ \ \ \ \  &  &  &  &  &  \\
^{Y}\ \diagup \ _{X} & \dots & \underset{x_{1}}{\bullet } & \rightarrow &
\underset{x_{2}}{\bullet }\  & \dots%
\end{array}%
\
\end{equation*}

\begin{example}
\label{ExCube}Consider an interval digraph $I=\left\{ ^{0}\bullet
\rightarrow \bullet ^{1}\right\} $ and define for any $n\geq 1$ the $n$-%
\emph{cube} as the digraph
\begin{equation*}
I^{n}=\underset{n\text{ times}}{\underbrace{I\Box ...\Box I}\ .}
\end{equation*}%
The digraphs $I^{2}$ and $I^{3}$ are shown on Fig. \ref{pic910}.\FRAME{%
ftbphFU}{2.9248in}{1.4062in}{0pt}{\Qcb{A square $I^{2}$ and a $3$-cube $%
I^{3} $ }}{\Qlb{pic910}}{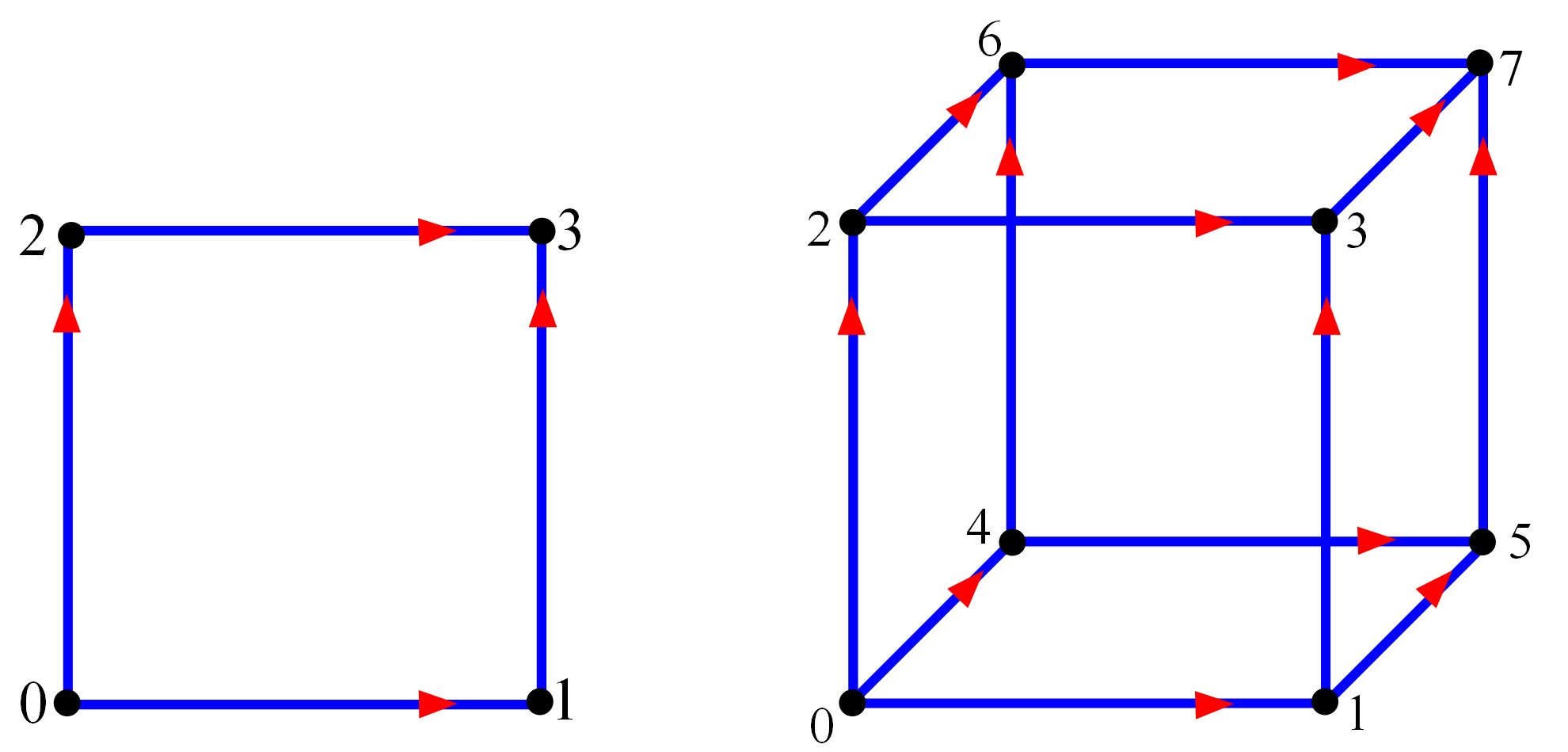}{\special{language "Scientific
Word";type "GRAPHIC";maintain-aspect-ratio TRUE;display "USEDEF";valid_file
"F";width 2.9248in;height 1.4062in;depth 0pt;original-width
9.167in;original-height 4.3932in;cropleft "0";croptop "1";cropright
"1";cropbottom "0";filename 'pic910.png';file-properties "XNPEU";}}
\end{example}

\begin{example}
\label{ExTorus}Consider a cyclic digraph $T=\left\{ ^{0}\bullet \rightarrow
\bullet ^{1}\rightarrow \bullet ^{2}\rightarrow ^{0}\right\} $ and define
for any $n\geq 1$ the $n$-\emph{torus} as the digraph
\begin{equation*}
T^{n}=\underset{n\text{\ times}}{\underbrace{T\Box ...\Box T}}.
\end{equation*}%
The tori $T$ and $T^{2}$ are shown on Fig. \ref{tori}.\FRAME{ftbphFU}{%
3.2413in}{1.3716in}{0pt}{\Qcb{A $3$-cycle $T$ and a torus $T^{2}$ embedded
on a topological torus}}{\Qlb{tori}}{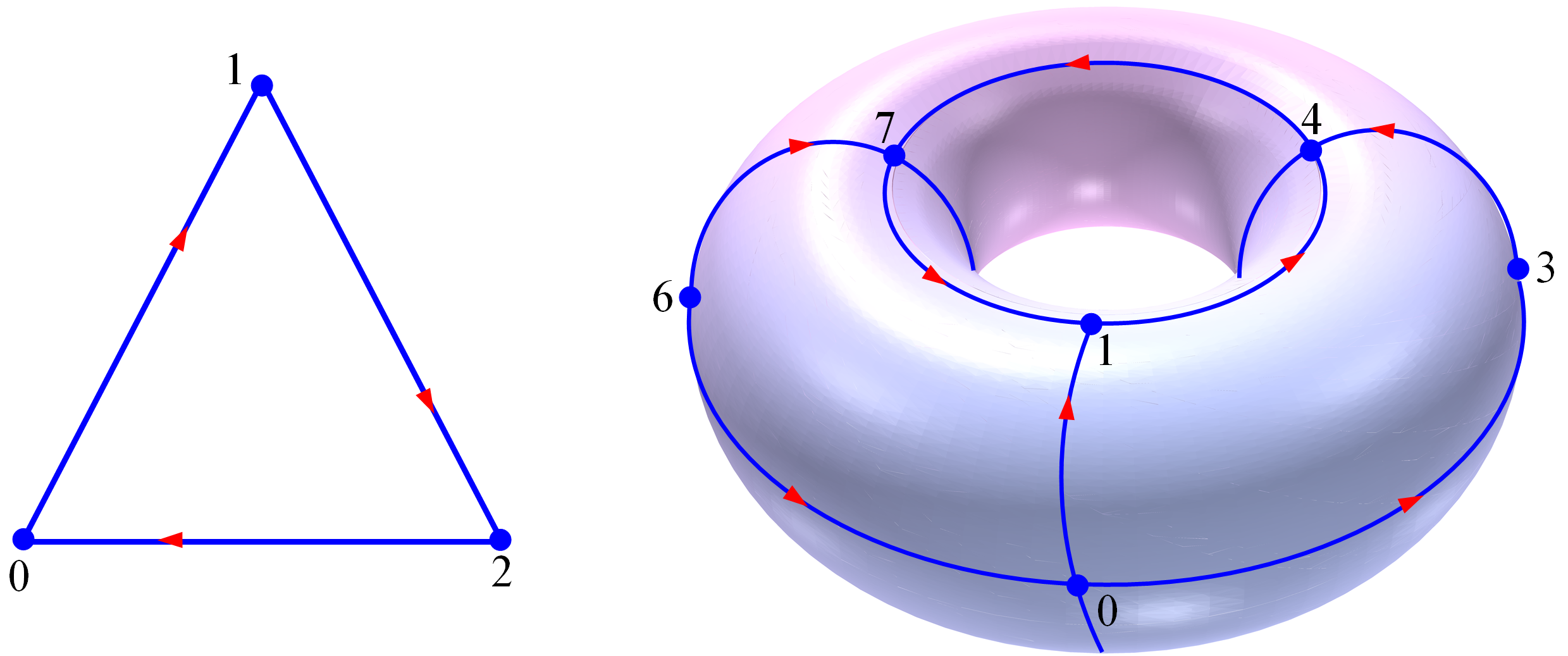}{\special{language "Scientific
Word";type "GRAPHIC";maintain-aspect-ratio TRUE;display "USEDEF";valid_file
"F";width 3.2413in;height 1.3716in;depth 0pt;original-width
11.1665in;original-height 5.393in;cropleft "0";croptop "1";cropright
"1";cropbottom "0";filename 'tori.png';file-properties "XNPEU";}}
\end{example}

It is easy to see that if $u$ and $v$ are allowed paths on $X$ and $Y$,
respectively, then $u\times v$ is allowed on $Z=X\Box Y$. It follows from (%
\ref{pr}) that if $u\in \Omega _{p}(X)$ and $v\in \Omega _{q}(Y)$ then $%
u\times v\in \Omega _{p+q}(Z).$

In fact, by the K\"{u}nneth formula of \cite[Theorem 4.7]{GMY2017}, we have,
for any $r\geq 0$,%
\begin{equation}
\Omega _{r}\left( X\Box Y\right) \cong \tbigoplus_{\left\{ p,q\geq
0:p+q=r\right\} }\left( \Omega _{p}\left( X\right) \otimes \Omega _{q}\left(
Y\right) \right) ,  \label{kun}
\end{equation}%
where the isomorphism is given by $u\otimes v\mapsto u\times v$.

The identity (\ref{kun}) implies a similar isomorphism for homology groups:%
\begin{equation}
H_{r}\left( X\Box Y\right) \cong \tbigoplus_{\left\{ p,q\geq 0:p+q=r\right\}
}\left( H_{p}\left( X\right) \otimes H_{q}\left( Y\right) \right) .
\label{kunH}
\end{equation}%
In particular, if $X$ and $Y$ are homologically trivial (that is, $%
H_{p}=\left\{ 0\right\} $ for all $p\geq 1$) then $X\Box Y$ is also
homologically trivial.

\section{Abstract Hodge Laplacian}

\label{sec:eigDecmp}\setcounter{equation}{0}In this section $\left\{ \Omega
_{p}\right\} _{p\geq 0}$ is any chain complex with a boundary operator $%
\partial :\Omega _{p}\rightarrow \Omega _{p-1}$, that is,%
\begin{equation}
\begin{array}{cccccccccccc}
0 & \leftarrow & \Omega _{0} & \overset{\partial }{\leftarrow } & \Omega _{1}
& \overset{\partial }{\leftarrow } & \dots & \overset{\partial }{\leftarrow }
& \Omega _{p-1} & \overset{\partial }{\leftarrow } & \Omega _{p} & \overset{%
\partial }{\leftarrow }\dots%
\end{array}%
,  \label{main}
\end{equation}%
where each $\Omega _{p}$ is a finite-dimensional linear space over $\mathbb{R%
}$.

Let us fix an arbitrary inner product $\left\langle \cdot ,\cdot
\right\rangle $ on each $\Omega _{p}$, which determines the adjoint boundary
operators $\partial ^{\ast }:\Omega _{p-1}\rightarrow \Omega _{p}.$ Define
the \emph{Hodge Laplacian} on $\Omega _{p}$ by
\begin{equation*}
\Delta _{p}=\partial \partial ^{\ast }+\partial ^{\ast }\partial .
\end{equation*}%
It is a non-negative definite self-adjoint operator on $\Omega _{p}$. It is
known that $0$ is the eigenvalue of $\Delta _{p}$ if and only if the
homology group $H_{p}$ of the chain complex (\ref{main}) is non-trivial;
moreover, the multiplicity of $0$ (as the eigenvalue of $\Delta _{p}$) is
equal to $\dim \,H_{p}$ (\cite[Corollary 3.4]{GLY2024}).

Denote%
\begin{equation*}
\mathcal{K}_{p}=\partial \partial ^{\ast }|_{\Omega _{p}}\ \ \text{and\ \ \ }%
\mathcal{L}_{p}=\partial ^{\ast }\partial |_{\Omega _{p}}.
\end{equation*}%
Clearly,
\begin{equation*}
\Delta _{p}=\mathcal{K}_{p}+\mathcal{L}_{p}
\end{equation*}%
and both $\mathcal{K}_{p}$ and $\mathcal{L}_{p}$ are non-negative definite
self-adjoint operators in $\Omega _{p}$ (see Fig. \ref{pic2}).

\FRAME{ftbphFU}{2.4284in}{0.7628in}{0pt}{\Qcb{Operators $\mathcal{K}_{p}$
and $\mathcal{L}_{p}$}}{\Qlb{pic2}}{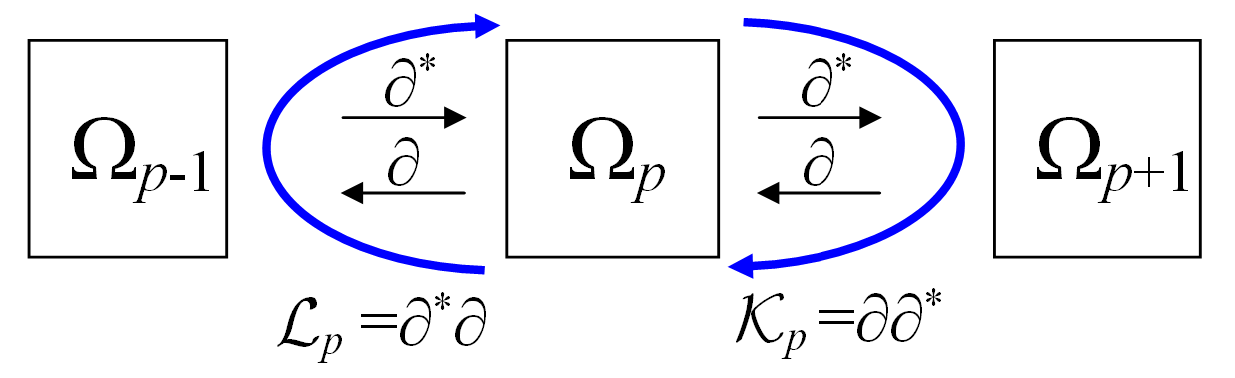}{\special{language "Scientific
Word";type "GRAPHIC";maintain-aspect-ratio TRUE;display "USEDEF";valid_file
"F";width 2.4284in;height 0.7628in;depth 0pt;original-width
6.1013in;original-height 1.9164in;cropleft "0";croptop "1";cropright
"1";cropbottom "0";filename 'pic2.png';file-properties "XNPEU";}}

Note for future references that
\begin{equation*}
\mathcal{L}_{0}=0\ \ \text{and\ \ }\Delta _{0}=\mathcal{K}_{0}.
\end{equation*}%
Fix now a positive real $\lambda $ and consider the subspaces of $\Omega
_{p} $:
\begin{equation*}
E_{p}(\lambda )=\{\varphi \in \Omega _{p}:\Delta _{p}\varphi =\lambda
\varphi \},
\end{equation*}%
\begin{align}
E_{p}^{\prime }\left( \lambda \right) & =\{\varphi \in \Omega _{p}:\mathcal{K%
}_{p}\varphi =\lambda \varphi \},  \label{E'} \\
E_{p}^{\prime \prime }(\lambda )& =\{\varphi \in \Omega _{p}:\mathcal{L}%
_{p}\varphi =\lambda \varphi \}.  \label{E''}
\end{align}

\begin{lemma}
\label{LemE+}\emph{\cite[Lemma 4.9]{GLY2024}} For any $\lambda >0$ we have%
\begin{equation}
E_{p}(\lambda )=E_{p}^{\prime }(\lambda )\oplus E_{p}^{\prime \prime
}(\lambda ).  \label{E+}
\end{equation}
\end{lemma}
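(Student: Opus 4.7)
The plan is to exploit the algebraic identity $\partial^{2}=0$, which forces $\mathcal{K}_{p}$ and $\mathcal{L}_{p}$ to annihilate each other. Indeed, since $(\partial^{\ast})^{2}=(\partial^{2})^{\ast}=0$, I have
\[
\mathcal{K}_{p}\mathcal{L}_{p} = \partial\partial^{\ast}\partial^{\ast}\partial = 0
\qquad\text{and}\qquad
\mathcal{L}_{p}\mathcal{K}_{p} = \partial^{\ast}\partial\,\partial\partial^{\ast} = 0.
\]
These two relations are the engine of the entire argument.

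First I would verify the inclusion $E_{p}^{\prime}(\lambda)+E_{p}^{\prime\prime}(\lambda)\subseteq E_{p}(\lambda)$ and the directness of the sum. If $\varphi\in E_{p}^{\prime}(\lambda)$, then applying $\mathcal{L}_{p}$ to $\mathcal{K}_{p}\varphi=\lambda\varphi$ gives $\lambda\mathcal{L}_{p}\varphi=\mathcal{L}_{p}\mathcal{K}_{p}\varphi=0$, and since $\lambda>0$ this forces $\mathcal{L}_{p}\varphi=0$. Hence $\Delta_{p}\varphi=\mathcal{K}_{p}\varphi+\mathcal{L}_{p}\varphi=\lambda\varphi$, so $\varphi\in E_{p}(\lambda)$. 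The same argument (with the roles of $\mathcal{K}_{p}$ and $\mathcal{L}_{p}$ swapped) handles $E_{p}^{\prime\prime}(\lambda)$. Moreover, if $\varphi\in E_{p}^{\prime}(\lambda)\cap E_{p}^{\prime\prime}(\lambda)$, the first computation yields $\mathcal{L}_{p}\varphi=0$, while the second gives $\mathcal{L}_{p}\varphi=\lambda\varphi$; hence $\varphi=0$.

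For the reverse inclusion $E_{p}(\lambda)\subseteq E_{p}^{\prime}(\lambda)\oplus E_{p}^{\prime\prime}(\lambda)$, I would perform an explicit decomposition. Given $\varphi\in E_{p}(\lambda)$, set
\[
\psi := \tfrac{1}{\lambda}\mathcal{K}_{p}\varphi,
\qquad
\eta := \tfrac{1}{\lambda}\mathcal{L}_{p}\varphi.
\]
Then $\psi+\eta=\tfrac{1}{\lambda}(\mathcal{K}_{p}+\mathcal{L}_{p})\varphi=\varphi$. Applying $\mathcal{K}_{p}$ to the eigenvalue equation $(\mathcal{K}_{p}+\mathcal{L}_{p})\varphi=\lambda\varphi$ and using $\mathcal{K}_{p}\mathcal{L}_{p}=0$ yields $\mathcal{K}_{p}^{2}\varphi=\lambda\mathcal{K}_{p}\varphi$, which is exactly $\mathcal{K}_{p}\psi=\lambda\psi$, so $\psi\in E_{p}^{\prime}(\lambda)$. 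Symmetrically, applying $\mathcal{L}_{p}$ and using $\mathcal{L}_{p}\mathcal{K}_{p}=0$ gives $\eta\in E_{p}^{\prime\prime}(\lambda)$.

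There is no genuine obstacle in this proof, as the whole statement reduces to the two vanishing products above together with the hypothesis $\lambda>0$ (which is what permits dividing by $\lambda$ and ruling out the intersection). The only point to highlight in the write-up is why $\lambda>0$ is essential: for $\lambda=0$ the subspace $E_{p}(0)$ equals $\ker\mathcal{K}_{p}\cap\ker\mathcal{L}_{p}$ and the splitting collapses, so the positivity assumption cannot be dropped.
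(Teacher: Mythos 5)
Your proof is correct and follows essentially the same route as the paper: the decomposition $\varphi=\tfrac{1}{\lambda}\mathcal{K}_{p}\varphi+\tfrac{1}{\lambda}\mathcal{L}_{p}\varphi$ is exactly the paper's, and your vanishing products $\mathcal{K}_{p}\mathcal{L}_{p}=\mathcal{L}_{p}\mathcal{K}_{p}=0$ are just a packaged form of the paper's direct use of $\partial^{2}=0$. The only (harmless) difference is that you establish directness via trivial intersection, whereas the paper proves the slightly stronger fact that $E_{p}^{\prime}(\lambda)$ and $E_{p}^{\prime\prime}(\lambda)$ are orthogonal.
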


\begin{proof}
Let us first verify that $E_{p}^{\prime }(\lambda )$ and $E_{\lambda
}^{\prime \prime }(\lambda )$ are subspaces of $E_{p}(\lambda ).$ Indeed, if
$\varphi \in E_{p}^{\prime }\left( \lambda \right) $ then
\begin{equation*}
\partial \partial ^{\ast }\varphi =\lambda \varphi
\end{equation*}%
whence%
\begin{equation*}
\partial \varphi =\frac{1}{\lambda }\partial \partial \partial ^{\ast
}\varphi =0,
\end{equation*}%
which implies that%
\begin{equation*}
\Delta _{p}\varphi =\partial \partial ^{\ast }\varphi +\partial ^{\ast
}\partial \varphi =\partial \partial ^{\ast }\varphi =\lambda \varphi
\end{equation*}%
and, hence, $\varphi \in E_{p}(\lambda )$. Hence, $E_{p}^{\prime }(\lambda
)\subset E_{p}(\lambda )$ and in the same way $E_{p}^{\prime \prime
}(\lambda )\subset E_{p}(\lambda ).$

Observe now that the subspace $E_{p}^{\prime }\left( \lambda \right) $ and $%
E_{p}^{\prime \prime }\left( \lambda \right) $ of $\Omega _{p}$ are
orthogonal because, for any $\varphi \in E_{p}^{\prime }\left( \lambda
\right) $ and $\psi \in E_{p}^{\prime \prime }\left( \lambda \right) ,$ we
have%
\begin{equation*}
\left\langle \varphi ,\psi \right\rangle =\frac{1}{\lambda ^{2}}\left\langle
\partial \partial ^{\ast }\varphi ,\partial ^{\ast }\partial \psi
\right\rangle =\frac{1}{\lambda ^{2}}\left\langle \partial \partial \partial
^{\ast }\varphi ,\partial \psi \right\rangle =0.
\end{equation*}%
Finally, let us prove (\ref{E+}). For any $\varphi \in E_{p}\left( \lambda
\right) $ we have%
\begin{equation*}
\left( \partial \partial ^{\ast }\right) ^{2}\varphi =\partial \partial
^{\ast }\left( \partial \partial ^{\ast }\varphi +\partial ^{\ast }\partial
\varphi \right) =\partial \partial ^{\ast }\Delta \varphi =\lambda \partial
\partial ^{\ast }\varphi ,
\end{equation*}%
which implies that $\partial \partial ^{\ast }\varphi \in E_{p}^{\prime }.$
Similarly, we have $\partial ^{\ast }\partial \varphi \in E_{p}^{\prime
\prime }.$ Hence, for any $\varphi \in E_{p}\left( \lambda \right) $ we have
\begin{equation*}
\varphi =\frac{1}{\lambda }\Delta \varphi =\frac{1}{\lambda }\partial
\partial ^{\ast }\varphi +\frac{1}{\lambda }\partial ^{\ast }\partial
\varphi ,
\end{equation*}%
whence (\ref{E+}) follows.
\end{proof}

\begin{lemma}
\label{Lemiso} For any $\lambda >0$, we have
\begin{equation*}
\dim E_{p}^{\prime }(\lambda )=\dim E_{p+1}^{\prime \prime }(\lambda )
\end{equation*}
\end{lemma}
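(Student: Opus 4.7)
The plan is to construct a linear isomorphism between $E_{p}^{\prime}(\lambda)$ and $E_{p+1}^{\prime\prime}(\lambda)$, which immediately gives the equality of dimensions. The two candidate maps are clear from the definitions: $\partial^{\ast}:\Omega_{p}\to\Omega_{p+1}$ should carry $E_{p}^{\prime}(\lambda)$ into $E_{p+1}^{\prime\prime}(\lambda)$, and $\partial:\Omega_{p+1}\to\Omega_{p}$ should go the other way.

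First I would check well-definedness. For $\varphi\in E_{p}^{\prime}(\lambda)$, set $\psi=\partial^{\ast}\varphi\in\Omega_{p+1}$ and compute
\begin{equation*}
\partial^{\ast}\partial\psi=\partial^{\ast}\partial\partial^{\ast}\varphi=\partial^{\ast}(\lambda\varphi)=\lambda\psi,
\end{equation*}
so $\partial^{\ast}\varphi\in E_{p+1}^{\prime\prime}(\lambda)$. Symmetrically, for $\psi\in E_{p+1}^{\prime\prime}(\lambda)$, $\partial\psi\in\Omega_{p}$ satisfies $\partial\partial^{\ast}(\partial\psi)=\partial(\partial^{\ast}\partial\psi)=\lambda\,\partial\psi$, so $\partial\psi\in E_{p}^{\prime}(\lambda)$.

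Next I would show these maps are mutual inverses up to the scalar $\lambda$. For $\varphi\in E_{p}^{\prime}(\lambda)$, we have $\partial(\partial^{\ast}\varphi)=\partial\partial^{\ast}\varphi=\lambda\varphi$; similarly $\partial^{\ast}(\partial\psi)=\lambda\psi$ for $\psi\in E_{p+1}^{\prime\prime}(\lambda)$. Since $\lambda>0$, the map $\tfrac{1}{\lambda}\partial:E_{p+1}^{\prime\prime}(\lambda)\to E_{p}^{\prime}(\lambda)$ is a two-sided inverse of $\partial^{\ast}:E_{p}^{\prime}(\lambda)\to E_{p+1}^{\prime\prime}(\lambda)$, so both are linear isomorphisms and the dimensions agree.

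There is no real obstacle here; the only point to be careful about is that $\lambda>0$ is essential, precisely because it lets us invert and rules out the degenerate case where $\partial^{\ast}\varphi$ or $\partial\psi$ could vanish. The argument is entirely parallel to the classical fact that for a bounded operator $T$, the nonzero eigenvalues of $TT^{\ast}$ and $T^{\ast}T$ coincide with the same multiplicities; here $T=\partial^{\ast}$ restricted to $\Omega_{p}$.
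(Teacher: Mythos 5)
Your proof is correct and follows essentially the same route as the paper: both use $\partial^{\ast}$ and $\partial$ as the maps between $E_{p}^{\prime}(\lambda)$ and $E_{p+1}^{\prime\prime}(\lambda)$, with the same intertwining computation. The only (harmless) difference is that the paper concludes via two monomorphisms in opposite directions, while you observe the slightly sharper fact that $\tfrac{1}{\lambda}\partial$ is an explicit two-sided inverse of $\partial^{\ast}$ on these eigenspaces.
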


\begin{proof}
This follows from \cite[Lemma 4.10]{GLY2024} but we give here a short
independent proof. If $\varphi \in E_{p}^{\prime }(\lambda )$ then
\begin{equation*}
\partial \partial ^{\ast }\varphi =\mathcal{K}_{p-1}\varphi =\lambda \varphi
\end{equation*}%
whence%
\begin{equation*}
\partial ^{\ast }\partial (\partial ^{\ast }\varphi )=\lambda \partial
^{\ast }\varphi .
\end{equation*}%
Since $\lambda \neq 0$, it follows that $\partial ^{\ast }\varphi \neq 0$
and, hence, $\partial ^{\ast }\varphi $ is an eigenvector of $\mathcal{L}%
_{p} $ with eigenvalue $\lambda $, that is, $\partial ^{\ast }\varphi \in
E_{p+1}^{\prime \prime }(\lambda ).$ Hence, $\partial ^{\ast }$ is a
monomorphism from $E_{p}^{\prime }(\lambda )$ to $E_{p+1}^{\prime \prime
}(\lambda )$. In the same way $\partial $ is a monomorphism from $%
E_{p+1}^{\prime \prime }(\lambda )$ to $E_{p}^{\prime }(\lambda )$, which
finishes the proof.
\end{proof}

For any finite-dimensional self-adjoint operator $A$ denote by $\func{spec}A$
the spectrum of $A$, that is, an unordered sequence of eigenvalues with
multiplicities. Observe that the problem of determination of the spectrum of
$A$ amounts to computation of dimensions of eigenspaces $\left\{ Ax=\lambda
x\right\} $ for all $\lambda \in \mathbb{R}$, that is, to multiplicities of
all real $\lambda $.

We will use the following operations with unordered sequences:

\begin{itemize}
\item disjoint union: $\left\{ \lambda _{i}\right\} \sqcup \left\{ \mu
_{j}\right\} =\left\{ \lambda _{i},\mu _{j}\right\} $ (if the same value $%
\lambda $ occurs in the both sequences, then its multiplicities add up in
the disjoint union);

\item set subtraction: $\left\{ \lambda _{i}\right\} \setminus \left\{ \mu
_{j}\right\} $ (the inverse operation to disjoint union);

\item multiplication by constant: $c\left\{ \lambda _{i}\right\} =\left\{
c\lambda _{i}\right\} $;

\item addition: $\left\{ \lambda _{i}\right\} +\left\{ \mu _{j}\right\}
=\left\{ \lambda _{i}+\mu _{j}\right\} $ (note that if one of the sequences
is empty then the sum is also empty).
\end{itemize}

For any non-negative definite self-adjoint operator $A$, set
\begin{equation*}
\func{spec}_{+}A=\func{spec}A\setminus \left\{ 0\right\} .
\end{equation*}%
The following statement provides a useful tool for computation of $\func{spec%
}\Delta _{p}$.

\begin{proposition}
\label{prop:SpecDecmp}For all $p\geq 0,$ we have%
\begin{equation}
\func{spec}_{+}\Delta _{p}=\func{spec}_{+}\mathcal{K}_{p}\sqcup \func{spec}%
_{+}\mathcal{L}_{p}  \label{spec0}
\end{equation}%
and
\begin{equation}
\func{spec}_{+}\mathcal{K}_{p}=\func{spec}_{+}\mathcal{L}_{p+1}.
\label{Dp-1Dp}
\end{equation}%
Consequently,
\begin{equation}
\func{spec}_{+}\Delta _{p}=\func{spec}_{+}\mathcal{L}_{p}\sqcup \func{spec}%
_{+}\mathcal{L}_{p+1}.  \label{spec+}
\end{equation}
\end{proposition}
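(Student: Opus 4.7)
The proposition is essentially a bookkeeping consequence of the two lemmas immediately preceding it. My plan is to translate the eigenspace identities of Lemmas \ref{LemE+} and \ref{Lemiso} into multiplicity statements, and then read off the claimed spectral identities for each positive $\lambda$.

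First, I would prove \eqref{spec0}. Recall that $\func{spec}_{+}A$ is determined by the function $\lambda \mapsto \dim\{x : Ax=\lambda x\}$ on $\lambda > 0$. By Lemma \ref{LemE+}, for every $\lambda > 0$ we have the direct sum decomposition $E_p(\lambda) = E_p'(\lambda) \oplus E_p''(\lambda)$, so taking dimensions yields
\begin{equation*}
\dim E_p(\lambda) = \dim E_p'(\lambda) + \dim E_p''(\lambda).
\end{equation*}
Since $E_p(\lambda)$, $E_p'(\lambda)$, $E_p''(\lambda)$ are the $\lambda$-eigenspaces of $\Delta_p$, $\mathcal{K}_p$, $\mathcal{L}_p$ respectively, this equality of multiplicities at every positive $\lambda$ is exactly the statement $\func{spec}_{+}\Delta_p = \func{spec}_{+}\mathcal{K}_p \sqcup \func{spec}_{+}\mathcal{L}_p$.

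Next I would handle \eqref{Dp-1Dp} in the same style. Lemma \ref{Lemiso} states $\dim E_p'(\lambda) = \dim E_{p+1}''(\lambda)$ for every $\lambda > 0$. Reinterpreting the two sides as multiplicities of $\lambda$ in $\func{spec}\mathcal{K}_p$ and $\func{spec}\mathcal{L}_{p+1}$ respectively gives \eqref{Dp-1Dp} immediately. Finally \eqref{spec+} is obtained by substituting \eqref{Dp-1Dp} into \eqref{spec0}.

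I do not expect any real obstacle here: the entire content is packaged in the two preceding lemmas, and the only point worth being careful about is the restriction to $\lambda > 0$, which is genuinely needed, since Lemma \ref{LemE+} can fail at $\lambda = 0$ (harmonic forms need not split as $\ker\mathcal{K}_p \oplus \ker\mathcal{L}_p$ in a way compatible with the direct sum). This is precisely why the claim is stated for $\func{spec}_{+}$ rather than $\func{spec}$, and it is also why the proof never needs to invoke a Hodge-type decomposition of $\Omega_p$ itself — tracking multiplicities of each positive eigenvalue separately is sufficient.
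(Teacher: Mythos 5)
Your proposal is correct and follows essentially the same route as the paper: the paper's proof likewise introduces the multiplicity functions $m_p(\lambda)$, $k_p(\lambda)$, $\ell_p(\lambda)$ for $\lambda>0$, reads \eqref{spec0} off the dimension count in Lemma \ref{LemE+}, reads \eqref{Dp-1Dp} off Lemma \ref{Lemiso}, and combines them to get \eqref{spec+}. Your closing remark about why the restriction to $\func{spec}_{+}$ is needed is a sensible observation that the paper leaves implicit.
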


\begin{proof}
Let $m_{p}(\lambda )$ be for any $\lambda >0$ the multiplicity of $\lambda $
as an eigenvalue of $\Delta _{p}$. In the same way, let $k_{p}(\lambda )$ be
the multiplicity function of $\mathcal{K}_{p}$ and $\ell _{p}(\lambda )$ --
that of $\mathcal{L}_{p}$. We have by Lemma \ref{LemE+} that, for any $%
\lambda >0,$%
\begin{equation*}
\dim E_{p}(\lambda )=\dim E_{p}^{\prime }(\lambda )+\dim E_{p}^{\prime
\prime }(\lambda ),
\end{equation*}%
that is,
\begin{equation*}
m_{p}(\lambda )=k_{p}(\lambda )+\ell _{p}(\lambda )\ \ \text{for any }%
\lambda >0.
\end{equation*}%
This identity is equivalent to (\ref{spec0}) because the disjoint union of
positive spectra is exactly addition of the multiplicities of any $\lambda
>0 $. Lemma \ref{Lemiso} yields $k_{p}(\lambda )=\ell _{p+1}(\lambda )$,
which is equivalent to (\ref{Dp-1Dp}).

Finally, (\ref{spec+}) follows from (\ref{spec0}) and (\ref{Dp-1Dp}).
Equivalently, (\ref{spec+}) can be stated as follows:%
\begin{equation}
m_{p}(\lambda )=\ell _{p}(\lambda )+\ell _{p+1}(\lambda )\ \ \text{for any }%
\lambda >0.  \label{spec++}
\end{equation}
\end{proof}

\begin{example}
\label{ExOm2=0}Assume that $\Omega _{2}=\left\{ 0\right\} .$ Since $\mathcal{%
L}_{0}=0$ and $\mathcal{L}_{2}=0$, we obtain from (\ref{spec+}) with $p=1$
and $p=0$ that%
\begin{equation*}
\func{spec}_{+}\Delta _{1}=\func{spec}_{+}\mathcal{L}_{1}=\func{spec}%
_{+}\Delta _{0}.
\end{equation*}
\end{example}

\begin{lemma}
\label{lem:specDp''}For any finite digraph $G$ and any $p\geq 1$, we have
\begin{equation}
\func{spec}_{+}\mathcal{L}_{p}=\bigsqcup_{k=0}^{\lfloor \frac{p-1}{2}\rfloor
}\func{spec}_{+}\Delta _{p-(2k+1)}\setminus \bigsqcup_{k=1}^{\lfloor \frac{p%
}{2}\rfloor }\func{spec}_{+}\Delta _{p-2k}  \label{pD''}
\end{equation}
\end{lemma}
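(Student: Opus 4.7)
My plan is to translate the claim into an identity of multiplicity functions and then solve a short linear recursion. Denote by $m_p(\lambda)$ and $\ell_p(\lambda)$ the multiplicities of $\lambda>0$ as an eigenvalue of $\Delta_p$ and of $\mathcal{L}_p$, respectively. Equation (\ref{spec++}) of Proposition \ref{prop:SpecDecmp} reads $m_{p-1}(\lambda)=\ell_{p-1}(\lambda)+\ell_p(\lambda)$, which I would rearrange into the one-step recursion
\[
\ell_p(\lambda)=m_{p-1}(\lambda)-\ell_{p-1}(\lambda),
\]
together with the boundary condition $\ell_0\equiv 0$ coming from $\mathcal{L}_0=0$. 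Unrolling by induction on $p$ yields the alternating closed form
\[
\ell_p(\lambda)=\sum_{j=0}^{p-1}(-1)^j\, m_{p-1-j}(\lambda).
\]

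Next I would split this alternating sum according to the parity of $j$. The even values $j=2k$, $k=0,1,\ldots,\lfloor(p-1)/2\rfloor$, contribute $+m_{p-(2k+1)}(\lambda)$, while the odd values $j=2k-1$, $k=1,2,\ldots,\lfloor p/2\rfloor$, contribute $-m_{p-2k}(\lambda)$. Translating this pointwise identity of multiplicity functions back into the unordered-sequence notation of the lemma, where $\sqcup$ corresponds to addition of multiplicities and $\setminus$ to their subtraction, gives exactly (\ref{pD''}).

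The one point requiring care is that the multiset subtraction in (\ref{pD''}) must be well-defined, i.e.\ for every $\lambda>0$ the total multiplicity appearing in the second $\bigsqcup$ should not exceed that in the first. This is automatic from the derivation, since the difference is by construction equal to $\ell_p(\lambda)\geq 0$. The remaining effort is pure bookkeeping on the upper limits of the two sums, which one confirms by treating the cases $p$ even and $p$ odd separately; I do not expect this to be a substantive obstacle.
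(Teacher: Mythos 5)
Your argument is correct and is essentially the paper's own proof: both reduce (\ref{pD''}) to the pointwise identity $\ell_p=\sum_{j=1}^{p}(-1)^{j+1}m_{p-j}$ (your indexing $\ell_p=\sum_{j=0}^{p-1}(-1)^{j}m_{p-1-j}$ is the same sum) and obtain it by iterating (\ref{spec++}) with the boundary condition $\ell_0\equiv 0$, then regroup by parity. Your remark that the multiset subtraction is automatically well-defined because the difference equals $\ell_p(\lambda)\geq 0$ is a small but worthwhile addition that the paper leaves implicit.
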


\begin{proof}
Using the multiplicity functions from the proof of Proposition \ref%
{prop:SpecDecmp}, let us rewrite (\ref{pD''}) in an equivalent form:%
\begin{align}
\ell _{p}& =\sum_{k=0}^{\lfloor \frac{p-1}{2}\rfloor }m_{p-\left(
2k+1\right) }-\sum_{k=1}^{\lfloor \frac{p}{2}\rfloor }m_{p-2k}  \notag \\
& =\sum_{j=1}^{p}\left( -1\right) ^{j+1}m_{p-j}.  \label{pmm}
\end{align}%
Hence, it suffices to prove (\ref{pmm}). Using (\ref{spec++}) with $p$
replaced by $p-1$ and iterating, we obtain%
\begin{eqnarray*}
l_{p} &=&m_{p-1}-l_{p-1} \\
&=&m_{p-1}-(m_{p-2}-l_{p-2}) \\
&=&m_{p-1}-m_{p-2}+(m_{p-3}-l_{p-3}) \\
&&... \\
&=&m_{p-1}-m_{p-2}+m_{p-3}-...-(-1)^{p}(m_{0}-l_{0})
\end{eqnarray*}%
whence (\ref{pmm}) follows as $l_{0}\equiv 0.$
\end{proof}

\section{Upper bound for $\protect\lambda _{\max }(\Delta _{1})$}

\label{sec:upperBound}\setcounter{equation}{0}In this section we use the
techniques based on Proposition \ref{prop:SpecDecmp} in order to obtain a
certain upper bound for $\func{spec}\Delta _{1}$ on digraphs.

Here the chain complex $\left\{ \Omega _{p}\right\} _{p\geq 0}$ is the path
chain complex of a digraph $G=\left( V,E\right) $ as defined in Section \ref%
{SecChainComplex}. We endow each path space $\mathcal{R}_{p}$ with the \emph{%
canonical} inner product $\left\langle \cdot ,\cdot \right\rangle $ given by
\begin{equation}
\left\langle e_{i_{0}...i_{p}},e_{j_{0}...j_{p}}\right\rangle =\delta
_{i_{0}...i_{p}}^{j_{0}...j_{p}}.  \label{canon}
\end{equation}%
In particular, each $\Omega _{p}$ is an inner product space. The
corresponding Hodge Laplacians $\Delta _{p}$ on $\Omega _{p}$ are referred
to as \emph{canonical }Hodge Laplacians of the digraph $G$.

Given the inner product structure on $\mathcal{R}_{p}$, we introduce for
each boundary operator $\partial :\mathcal{R}_{p}\rightarrow \mathcal{R}%
_{p-1}$ the dual operator $\partial ^{\ast }:\mathcal{R}_{p-1}\rightarrow
\mathcal{R}_{p}$, defined by
\begin{equation*}
\langle \partial ^{\ast }u,v\rangle =\langle u,\partial v\rangle ,\quad
\forall u\in \mathcal{R}_{p-1},\ v\in \mathcal{R}_{p}.
\end{equation*}

For any self-adjoint operator $A$ in a finite-dimensional space, denote by $%
\lambda _{\max }(A)$ the maximal eigenvalue of $A$ and by $\lambda _{\min
}(A)$ -- the minimal eigenvalue of $A$. In this section we prove an upper
estimate of $\lambda _{\max }(\Delta _{1})$.

Consider first $\Delta _{0}.$ For any vertex $i$ of $G$, denote by $\deg (i)$
the (undirected) degree of $i$, that is, the total number of arrows having $%
i $ at the one of the ends. It was proved in \cite[Prop. 6.2]{G2022} that
\begin{equation}
\lambda _{\max }\left( \Delta _{0}\right) \leq 2\max_{i\in V}\deg i.
\label{lamax0}
\end{equation}%
Let $A$ denote the adjacency matrix of $G$, that is, $A=\left( a_{ij}\right)
_{i,j\in V}$ where
\begin{equation*}
a_{ij}=\left\{
\begin{array}{ll}
1, & \text{if }i\rightarrow j\text{ or }j\rightarrow i \\
0 & \text{otherwise.}%
\end{array}%
\right.
\end{equation*}%
If $G$ contains no double arrow then, in the basis $\left\{ e_{i}\right\} ,$
\begin{equation}
\text{matrix\ of\ }\Delta _{0}=\func{diag}\left( \deg (i)\right) -A
\label{made0}
\end{equation}%
(see \cite[Example 6.9]{G2022}). In particular, $\Delta _{0}$ does not
depend on the choice of orientation of arrows.

For any arrow $\xi $ in $G$, denote by $\deg _{\Delta }\xi $ the number of
triangles containing the arrow $\xi $, and by $\deg _{\Box }\xi $ -- the
number of squares containing $\xi $. Recall that the notion of multisquare
was defined in Section \ref{SecChainComplex} (see also Fig. \ref{spider}).

\begin{theorem}
\label{Tlamax}Assume that digraph $G$ contains neither multisquares nor
double arrows. Then
\begin{equation}
\lambda _{\max }\left( \Delta _{1}\right) \leq \max \left( 2\max_{i\in
V}\deg i,~\max_{\xi \in E}\left( 3\deg _{\Delta }\xi +2\deg _{\Box }\xi
\right) \right) .  \label{lamax1<}
\end{equation}
\end{theorem}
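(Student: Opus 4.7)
The strategy is to combine the abstract spectral decomposition from Proposition \ref{prop:SpecDecmp} with the known estimate (\ref{lamax0}) for $\lambda_{\max}(\Delta_0)$ and a separate bound for $\lambda_{\max}(\mathcal{L}_2)$. Identity (\ref{spec+}) with $p=1$ reads $\func{spec}_+\Delta_1 = \func{spec}_+\mathcal{L}_1 \sqcup \func{spec}_+\mathcal{L}_2$, and since $\mathcal{L}_0 = 0$ we have $\Delta_0 = \mathcal{K}_0$, so (\ref{Dp-1Dp}) with $p=0$ gives $\func{spec}_+\mathcal{L}_1 = \func{spec}_+\Delta_0$. Therefore $\lambda_{\max}(\Delta_1) \leq \max\bigl(\lambda_{\max}(\Delta_0),\,\lambda_{\max}(\mathcal{L}_2)\bigr)$, and by (\ref{lamax0}) it suffices to prove
\[
\lambda_{\max}(\mathcal{L}_2) \leq \max_{\xi \in E}\bigl(3\deg_\Delta\xi + 2\deg_\Box\xi\bigr).
\]

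For this I would use the variational characterization $\lambda_{\max}(\mathcal{L}_2) = \sup\{\|\partial u\|^2/\|u\|^2 : u \in \Omega_2 \setminus \{0\}\}$. Under the hypotheses, Proposition \ref{POm2} gives that $\Omega_2$ is spanned by triangles $T = e_{ijk}$ and squares $S = e_{ijk} - e_{ij'k}$. A triangle path satisfies $i \to k$ while a square path satisfies $i \not\to k$, so the canonical supports of triangles and squares are disjoint; and since there are no multisquares, distinct squares are indexed by distinct non-arrow pairs $(i,k)$, hence also have disjoint supports. Consequently $\{T\} \cup \{S\}$ is an \emph{orthogonal} basis of $\Omega_2$ with $\|T\|^2 = 1$ and $\|S\|^2 = 2$. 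Writing $u = \sum_T \alpha_T T + \sum_S \beta_S S$ gives $\|u\|^2 = \sum_T \alpha_T^2 + 2\sum_S \beta_S^2$.

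From $\partial T = e_{ij} - e_{ik} + e_{jk}$ and $\partial S = e_{ij} + e_{jk} - e_{ij'} - e_{j'k}$, the coefficient of $e_{ab}$ in $\partial u$ for any arrow $\xi = (a,b) \in E$ has the form $(\partial u)_\xi = \sum_{T \ni \xi}\epsilon_T \alpha_T + \sum_{S \ni \xi}\epsilon_S \beta_S$ with signs $\epsilon = \pm 1$, where the sums range over exactly $\deg_\Delta\xi$ triangles and $\deg_\Box\xi$ squares containing $\xi$. The weighted Cauchy--Schwarz inequality $(\sum a_i)^2 \leq (\sum w_i)(\sum a_i^2/w_i)$ with weight $3$ on each triangle-summand and weight $2$ on each square-summand yields
\[
((\partial u)_\xi)^2 \leq (3\deg_\Delta\xi + 2\deg_\Box\xi)\Bigl(\tfrac{1}{3}\sum_{T \ni \xi}\alpha_T^2 + \tfrac{1}{2}\sum_{S \ni \xi}\beta_S^2 \Bigr).
\]
Summing over $\xi \in E$ and interchanging the order of summation using that each triangle contains exactly $3$ arrows and each square exactly $4$ (so $\sum_\xi\sum_{T \ni \xi}\alpha_T^2 = 3\sum_T \alpha_T^2$ and $\sum_\xi\sum_{S \ni \xi}\beta_S^2 = 4\sum_S \beta_S^2$), the right-hand side collapses to $\max_\xi(3\deg_\Delta\xi + 2\deg_\Box\xi)\cdot\bigl(\sum_T\alpha_T^2 + 2\sum_S\beta_S^2\bigr) = \max_\xi(3\deg_\Delta\xi + 2\deg_\Box\xi)\,\|u\|^2$.

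The delicate point is the precise choice of weights $(3,2)$: they are tuned so that after the double-sum interchange the factors $\tfrac{1}{3}\cdot 3$ and $\tfrac{1}{2}\cdot 4$ exactly reproduce the norm coefficients $1$ and $2$ in $\|u\|^2 = \sum\alpha_T^2 + 2\sum\beta_S^2$; any other choice gives a strictly weaker bound. The hypotheses of no double arrows and no multisquares are essential precisely because they guarantee both the orthogonality of the triangle--square basis and that $\deg_\Delta\xi + \deg_\Box\xi$ accounts for every nonzero summand in $(\partial u)_\xi$, so that the combinatorial counts translate directly into the spectral estimate.
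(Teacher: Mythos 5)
Your proof is correct and follows essentially the same route as the paper's: the same spectral decomposition from Proposition \ref{prop:SpecDecmp} reduces the claim to the bound (\ref{lamax0}) on $\lambda_{\max}(\Delta_0)$ together with a bound on the common positive spectrum of $\mathcal{K}_1$ and $\mathcal{L}_2$, which you then obtain by the same weighted Cauchy--Schwarz argument with weights $3$ and $2$ over the orthogonal triangle/square basis of $\Omega_2$. The only (cosmetic) difference is that you estimate the Rayleigh quotient $\Vert\partial u\Vert^2/\Vert u\Vert^2$ over $u\in\Omega_2$, whereas the paper estimates the adjoint quantity $\Vert\partial^{\ast}u\Vert^2/\Vert u\Vert^2$ over $u\in\Omega_1$; since $\func{spec}_{+}\mathcal{K}_1=\func{spec}_{+}\mathcal{L}_2$ by (\ref{Dp-1Dp}), these yield the identical estimate.
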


The estimate (\ref{lamax1<}) improves significantly the estimate
\begin{equation}
\lambda _{\max }\left( \Delta _{1}\right) \leq 2\max_{i\in V}\deg
i+3\max_{\xi \in E}\deg _{\Delta }\xi +2\max_{\xi \in E}\deg _{\Box }\xi
\label{lamax}
\end{equation}%
that was proved in \cite[Theorem 6.20]{G2022}.

\begin{proof}
By (\ref{spec0}) and (\ref{Dp-1Dp}) we have%
\begin{equation*}
\func{spec}_{+}\Delta _{0}=\func{spec}_{+}\mathcal{K}_{0}
\end{equation*}%
and
\begin{equation*}
\func{spec}_{+}\Delta _{1}=\func{spec}_{+}\mathcal{K}_{0}\sqcup \func{spec}%
_{+}\mathcal{K}_{1}
\end{equation*}%
whence%
\begin{equation}
\func{spec}_{+}\Delta _{1}=\func{spec}_{+}\Delta _{0}\sqcup \func{spec}_{+}%
\mathcal{K}_{1}.  \label{specDe1K1}
\end{equation}%
Hence, the estimate (\ref{lamax1<}) will follow from (\ref{specDe1K1}), (\ref%
{lamax0}) and
\begin{equation}
\lambda _{\max }(\mathcal{K}_{1})\leq ~\max_{\xi \in E}\left( 3\deg _{\Delta
}\xi +2\deg _{\Box }\xi \right) .  \label{lamaxD''}
\end{equation}%
Let us now prove (\ref{lamaxD''}). Since the Rayleigh quotient of $\mathcal{K%
}_{1}$ is%
\begin{equation*}
\frac{\left\langle \mathcal{K}_{1}u,u\right\rangle }{\left\Vert u\right\Vert
^{2}}=\frac{\left\langle \partial \partial ^{\ast }u,u\right\rangle }{%
\left\Vert u\right\Vert ^{2}}=\frac{\left\Vert \partial ^{\ast }u\right\Vert
^{2}}{\left\Vert u\right\Vert ^{2}},
\end{equation*}%
it suffices to prove that, for any $u\in \Omega _{1}$,
\begin{equation}
\left\Vert \partial ^{\ast }u\right\Vert ^{2}\leq \max_{\xi \in E}\left(
3\deg _{\Delta }\xi +2\deg _{\Box }\xi \right) \left\Vert u\right\Vert ^{2}.
\label{d*u1}
\end{equation}%
Let $u=\sum_{\xi \in E}u^{\xi }e_{\xi }$ and, hence,%
\begin{equation*}
\left\Vert u\right\Vert ^{2}=\sum_{\xi \in E}(u^{\xi })^{2}.
\end{equation*}%
By the hypothesis, all triangles and squares form an orthogonal basis in $%
\Omega _{2}$, denote it by $\left\{ \gamma _{n}\right\} $. Using this basis
in $\Omega _{2}$, we have%
\begin{equation*}
\left\Vert \partial ^{\ast }u\right\Vert ^{2}=\sum_{n}\frac{\left\langle
\partial ^{\ast }u,\gamma _{n}\right\rangle ^{2}}{\left\Vert \gamma
_{n}\right\Vert ^{2}}=\sum_{n}\frac{\left\langle u,\partial \gamma
_{n}\right\rangle ^{2}}{\left\Vert \gamma _{n}\right\Vert ^{2}}.
\end{equation*}%
If $\gamma _{n}$ is a triangle $e_{abc}$ then $\left\Vert \gamma
_{n}\right\Vert =1$ and%
\begin{equation*}
\left\langle u,\partial \gamma _{n}\right\rangle =\left\langle
u,e_{bc}-e_{ac}+e_{ab}\right\rangle =u^{bc}-u^{ac}+u^{ab},
\end{equation*}%
\begin{equation}
\frac{\left\langle u,\partial \gamma _{n}\right\rangle ^{2}}{\left\Vert
\gamma _{n}\right\Vert ^{2}}\leq 3\left(
(u^{bc})^{2}+(u^{ac})^{2}+(u^{ab})^{2}\right) .  \label{ga3}
\end{equation}%
If $\gamma _{n}$ is a square $e_{abc}-e_{ab^{\prime }c}$ then $\left\Vert
\gamma _{n}\right\Vert ^{2}=2$ and%
\begin{equation*}
\left\langle u,\partial \gamma _{n}\right\rangle =\left\langle
u,e_{ab}+e_{bc}-e_{ab^{\prime }}+e_{b^{\prime }c}\right\rangle
=u^{ab}+u^{bc}-u^{ab^{\prime }}-u^{b^{\prime }c},
\end{equation*}%
\begin{equation}
\frac{\left\langle u,\partial \gamma _{n}\right\rangle ^{2}}{\left\Vert
\gamma _{n}\right\Vert ^{2}}\leq 2\left(
(u^{ab})^{2}+(u^{bc})^{2}+(u^{ab^{\prime }})^{2}+(u^{b^{\prime
}c})^{2}\right) .  \label{ga2}
\end{equation}%
Let us sum up (\ref{ga3}) over all triangles $\gamma _{n}$ and (\ref{ga2})
over all squares $\gamma _{n}$ and observe that, for each arrow $\xi \in E$,
the component $\left( u^{\xi }\right) ^{2}$ appears in the sum (\ref{ga3})
for $\deg _{\Delta }\xi $ triangles $\gamma _{n}$ and in the sum (\ref{ga2})
for $\deg _{\Box }\xi $ squares $\gamma _{n}$. Hence, we obtain that%
\begin{eqnarray*}
\sum_{n}\frac{\left\langle u,\partial \gamma _{n}\right\rangle ^{2}}{%
\left\Vert \gamma _{n}\right\Vert ^{2}} &\leq &\sum_{\xi \in E}\left( 3\deg
_{\Delta }\xi +2\deg _{\Box }\xi \right) \left( u^{\xi }\right) ^{2} \\
&\leq &\max_{\xi \in E}\left( 3\deg _{\Delta }\xi +2\deg _{\Box }\xi \right)
\sum_{\xi \in E}\left( u^{\xi }\right) ^{2},
\end{eqnarray*}%
whence (\ref{d*u1}) follows.
\end{proof}

\begin{corollary}
Under the hypothesis of Theorem \emph{\ref{Tlamax}}, if in addition $\deg
_{\Delta }\xi \leq 2$ for all $\xi \in E$ then
\begin{equation}
\lambda _{\max }(\Delta _{1})\leq 2\max_{i\in V}\deg i.  \label{la1maxi}
\end{equation}
\end{corollary}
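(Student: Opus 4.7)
The strategy is to recycle the spectral decomposition that was used in the proof of Theorem~\ref{Tlamax} and then sharpen the estimate on $\mathcal{K}_1$ under the extra hypothesis. By the identity (\ref{specDe1K1}) we have
$$\func{spec}_+\Delta_1 = \func{spec}_+\Delta_0 \sqcup \func{spec}_+\mathcal{K}_1,$$
so
$$\lambda_{\max}(\Delta_1)=\max\!\bigl(\lambda_{\max}(\Delta_0),\,\lambda_{\max}(\mathcal{K}_1)\bigr),$$
with the convention that $\lambda_{\max}(\mathcal{K}_1)$ is set to $0$ if $\mathcal{K}_1\equiv 0$. Since (\ref{lamax0}) already gives $\lambda_{\max}(\Delta_0)\le 2\max_{i\in V}\deg i$, the entire proof reduces to showing
$$\lambda_{\max}(\mathcal{K}_1)\le 2\max_{i\in V}\deg i$$
under the assumption $\deg_\Delta\xi\le 2$ for every $\xi\in E$.

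For this, I would revisit the Rayleigh-quotient argument used to establish (\ref{lamaxD''}). Because $G$ has no multisquares and no double arrows, triangles and squares still form an orthogonal basis of $\Omega_2$ (Proposition~\ref{POm2}), and the same computation that produced (\ref{d*u1}) yields, for any $u=\sum_\xi u^\xi e_\xi\in\Omega_1$,
$$\|\partial^* u\|^2 \le \sum_{\xi\in E}\bigl(3\deg_\Delta\xi + 2\deg_\Box\xi\bigr)(u^\xi)^2.$$
Therefore it suffices to prove the sharper \emph{per-arrow} inequality
$$3\deg_\Delta\xi + 2\deg_\Box\xi \le 2\max_{i\in V}\deg i \qquad\text{for every } \xi\in E.$$

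The key combinatorial observation is that for a fixed arrow $\xi = i\to j$, every $\partial$-invariant $2$-path containing $\xi$ (triangle or square) contributes at least one ``auxiliary'' arrow incident to $i$ and at least one incident to $j$, distinct from $\xi$ itself: in a triangle $\gamma$ the third vertex lies in the (undirected) neighborhood $N(i)\cap N(j)$, while in a square $e_{abc}-e_{ab'c}$ containing $\xi$ the two ``other'' arrows of the square adjacent to $\xi$ at its endpoints are again incident to $i$ or $j$. Bounding the number of such auxiliaries by $\deg i+\deg j-2\le 2\max_k\deg k - 2$, and then absorbing the residual coefficient using $\deg_\Delta\xi\le 2$, should yield the desired per-arrow bound, after which (\ref{la1maxi}) follows immediately.

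The main obstacle is precisely this combinatorial double counting: one must verify that distinct triangles and squares containing $\xi$ give rise to genuinely distinct auxiliary arrows at the two endpoints of $\xi$ (avoiding double counting across the $3\deg_\Delta$ and $2\deg_\Box$ coefficients), and that the extra weight-$3$ coming from triangles — as opposed to weight-$2$ from squares — is exactly compensated by $\deg_\Delta\xi\le 2$. Once this accounting is set up correctly, the remaining computation is purely arithmetic.
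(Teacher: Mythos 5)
Your plan is essentially the paper's proof: reduce via (\ref{specDe1K1}) and (\ref{lamax0}) to bounding the per-arrow quantity $3\deg _{\Delta }\xi +2\deg _{\Box }\xi $ from (\ref{lamax1<}), count the arrows that each triangle and square through $\xi $ forces to be incident to an endpoint of $\xi $, and absorb the extra unit of weight carried by triangles using $\deg _{\Delta }\xi \leq 2$. The paper does the count at a single endpoint of the arrow maximizing $3\deg _{\Delta }\xi +2\deg _{\Box }\xi $, asserting $\deg i\geq 1+\deg _{\Delta }\xi +\deg _{\Box }\xi $ and then doubling; your two-endpoint version $2\deg _{\Delta }\xi +2\deg _{\Box }\xi \leq \deg i+\deg j-2$ is the same arithmetic. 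The one step you defer --- that distinct triangles and squares through $\xi $ yield distinct auxiliary arrows at a given endpoint --- is exactly the step the paper disposes of in a single asserted sentence, and your caution is justified: as literally stated it can fail. For example, with vertices $i,j,b',c$ and arrows $i\rightarrow j$, $i\rightarrow b'$, $b'\rightarrow j$, $j\rightarrow c$, $b'\rightarrow c$ (so $i\not\rightarrow c$, no double arrows, no multisquares), the triangle $e_{ib'j}$ and the square $e_{ijc}-e_{ib'c}$ both contain $\xi =i\rightarrow j$ and both contribute the \emph{same} auxiliary arrow $i\rightarrow b'$ at $i$, so $\deg i=2<3=1+\deg _{\Delta }\xi +\deg _{\Box }\xi $ and your inequality $2\deg _{\Delta }\xi +2\deg _{\Box }\xi \leq \deg i+\deg j-2$ reads $4\leq 3$. (The target bound $3\deg _{\Delta }\xi +2\deg _{\Box }\xi =5\leq 6=2\max \deg $ still holds there, and at the other endpoint $j$ the auxiliaries $b'\rightarrow j$ and $j\rightarrow c$ do separate.) So the obstacle you flag is not a routine verification but the genuinely delicate point; closing it requires a finer argument (e.g.\ choosing the endpoint at which the no-multisquare and no-double-arrow hypotheses force the auxiliaries apart, or exploiting the slack left by $\deg _{\Delta }\xi \leq 2$), and the paper's own proof of this corollary does not supply that argument either.
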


\begin{proof}
Let $\xi =ij\in E$ be an arrow where $\max_{\xi \in E}\left( 3\deg _{\Delta
}\xi +2\deg _{\Box }\xi \right) $ is attained. The vertex $i$ has adjacent
arrow $\xi $; besides, each of $\deg _{\Delta }\xi $ triangles attached to $%
\xi $ gives one more adjacent arrow to $i$, and so do all $\deg _{\Box }\xi $
squares attached to $\xi $. Hence, we obtain%
\begin{equation*}
\deg i\geq 1+\deg _{\Delta }\xi +\deg _{\Box }\xi .
\end{equation*}%
Using $\deg _{\Delta }\xi \leq 2$, we obtain%
\begin{equation*}
2\deg i\geq 2+2\deg _{\Delta }\xi +2\deg \Box \xi \geq 3\deg _{\Delta }\xi
+2\deg \Box \xi .
\end{equation*}%
Hence, (\ref{la1maxi}) follows from (\ref{lamax1<}).
\end{proof}

\begin{example}
Let $I$ be the interval digraph, that is, $I=\left\{ ^{0}\bullet \rightarrow
\bullet ^{1}\right\} $. As in Example \ref{ExCube}, consider the $n$-cube
\begin{equation*}
I^{n}=\underset{n\text{ times}}{\underbrace{I\Box ...\Box I}}.
\end{equation*}%
It is easy to see that $I^{n}$ contains no triangles. Since $\deg \left(
i\right) =n$ for any vertex $i$, it follows from (\ref{la1maxi}) that, for
the digraph $I^{n}$,
\begin{equation*}
\lambda _{\max }\left( \Delta _{1}\right) \leq 2n.
\end{equation*}%
As we will see in Section~\ref{specDepIn}, for $I^{n}$ we have, in fact, the
equality $\lambda _{\max }(\Delta _{1})=2n$.
\end{example}

In fact, In all known examples (\ref{la1maxi}) is satisfied even if $\deg
_{\Delta }\xi >2$ for some $\xi $.

\section{Weighted Hodge Laplacian}

\label{sec:WHL}\setcounter{equation}{0}As in Section \ref{sec:upperBound},
we denote by $\Delta _{p}$ the canonical Hodge Laplacian on a digraph $G$
associated with the canonical inner product (\ref{canon}). As in section \ref%
{sec:eigDecmp}, we have
\begin{equation*}
\Delta _{p}=\mathcal{K}_{p}+\mathcal{L}_{p}
\end{equation*}%
where
\begin{equation*}
\mathcal{K}_{p}=\partial \partial ^{\ast }|_{\Omega _{p}}\ \ \text{and\ \ \ }%
\mathcal{L}_{p}=\partial ^{\ast }\partial |_{\Omega _{p}}
\end{equation*}%
and $\partial ^{\ast }$ is the adjoint to $\partial $ with respect to the
canonical inner product.

Fix a sequence of positive numbers $a=\left\{ a_{p}\right\} _{p=0}^{\infty }$
and define another another inner product in $\mathcal{R}_{p}(G)$ that will
be referred to as \emph{weighted inner product}:%
\begin{equation*}
\left\langle e_{i_{0}...i_{p}},e_{j_{0}...j_{p}}\right\rangle _{a}:=\frac{1}{%
a_{p}}\left\langle e_{i_{0}...i_{p}},e_{j_{0}...j_{p}}\right\rangle =\frac{1%
}{a_{p}}\delta _{i_{0}...i_{p}}^{j_{0}...j_{p}}.
\end{equation*}%
In particular, we have $\left\Vert e_{i_{0}...i_{p}}\right\Vert _{a}^{2}=%
\frac{1}{a_{p}}.$ Denote by $\partial _{a}^{\ast }$ the adjoint operator of $%
\partial $ with respect to the weighted inner product, and by $\Delta
_{p}^{\left( a\right) }$ the corresponding Hodge Laplacian of $\left\{
\Omega _{p}\right\} _{p\geq 0}$:%
\begin{equation*}
\Delta _{p}^{(a)}=\partial \partial _{a}^{\ast }+\partial _{a}^{\ast
}\partial =\mathcal{K}_{p}^{\left( a\right) }+\mathcal{L}_{p}^{\left(
a\right) }
\end{equation*}%
where
\begin{equation*}
\mathcal{K}_{p}^{(a)}=\partial \partial _{a}^{\ast }|_{\Omega _{p}}\ \ \text{%
and\ \ }\mathcal{L}_{p}^{\left( a\right) }=\partial _{a}^{\ast }\partial
|_{\Omega _{p}}
\end{equation*}%
For convenience of notation, let us set $a_{-1}=1.$

\begin{lemma}
\label{Lemap}We have for $p\geq 0$%
\begin{equation}
\mathcal{K}_{p}^{(a)}=\frac{a_{p+1}}{a_{p}}\mathcal{K}_{p}  \label{D'}
\end{equation}%
and%
\begin{equation}
\mathcal{L}_{p}^{\left( a\right) }=\frac{a_{p}}{a_{p-1}}\mathcal{L}_{p}.
\label{D''}
\end{equation}%
Consequently,
\begin{equation}
\Delta _{p}^{\left( a\right) }=\frac{a_{p+1}}{a_{p}}\mathcal{K}_{p}+\frac{%
a_{p}}{a_{p-1}}\mathcal{L}_{p}.  \label{De12}
\end{equation}
\end{lemma}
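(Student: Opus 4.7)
The plan is straightforward: express the weighted adjoint $\partial_a^{\ast}$ in terms of the canonical adjoint $\partial^{\ast}$ by using the defining property of an adjoint together with the scaling relation between the two inner products, and then substitute into the definitions of $\mathcal{K}_p^{(a)}$ and $\mathcal{L}_p^{(a)}$.

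First I would fix $k\geq 0$ and look at the boundary $\partial:\mathcal{R}_{k+1}\to\mathcal{R}_k$. Its weighted adjoint at this level, call it temporarily $\partial_{a,k}^{\ast}:\mathcal{R}_k\to\mathcal{R}_{k+1}$, is defined by
\begin{equation*}
\langle \partial_{a,k}^{\ast}u,v\rangle_a = \langle u,\partial v\rangle_a \qquad \text{for all }u\in\mathcal{R}_k,\ v\in\mathcal{R}_{k+1}.
\end{equation*}
Using $\langle\cdot,\cdot\rangle_a = \frac{1}{a_{\bullet}}\langle\cdot,\cdot\rangle$ on each level, the left hand side equals $\frac{1}{a_{k+1}}\langle \partial_{a,k}^{\ast}u,v\rangle$ while the right hand side equals $\frac{1}{a_k}\langle u,\partial v\rangle = \frac{1}{a_k}\langle\partial^{\ast}u,v\rangle$. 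Since this holds for every $v\in\mathcal{R}_{k+1}$, I obtain the identity
\begin{equation*}
\partial_{a,k}^{\ast} = \frac{a_{k+1}}{a_k}\,\partial^{\ast}\qquad\text{on }\mathcal{R}_k.
\end{equation*}

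With this in hand, the two operator formulas follow by reading off the level at which the adjoint acts. For \eqref{D'}, the operator $\mathcal{K}_p^{(a)} = \partial\partial_a^{\ast}|_{\Omega_p}$ uses $\partial_a^{\ast}$ mapping $\Omega_p\to\mathcal{R}_{p+1}$, i.e.\ at level $k=p$, so the scaling factor is $a_{p+1}/a_p$; composing with $\partial$ (which is unchanged) gives $\mathcal{K}_p^{(a)} = \frac{a_{p+1}}{a_p}\mathcal{K}_p$. For \eqref{D''}, the operator $\mathcal{L}_p^{(a)} = \partial_a^{\ast}\partial|_{\Omega_p}$ applies $\partial_a^{\ast}$ to elements of $\Omega_{p-1}$, i.e.\ at level $k=p-1$, so the scaling factor is $a_p/a_{p-1}$, yielding $\mathcal{L}_p^{(a)} = \frac{a_p}{a_{p-1}}\mathcal{L}_p$. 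Adding the two displays produces \eqref{De12}.

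There is no real obstacle: the only subtlety worth flagging is the bookkeeping of which level the adjoint lives on, since $\partial_a^{\ast}$ appears in $\mathcal{K}_p^{(a)}$ and $\mathcal{L}_p^{(a)}$ at different levels ($p$ versus $p-1$), which is precisely why the two ratios $a_{p+1}/a_p$ and $a_p/a_{p-1}$ enter. The convention $a_{-1}=1$ handles the case $p=0$, where $\mathcal{L}_0 = 0$ and the formula for $\mathcal{L}_0^{(a)}$ is vacuous (consistent with $\Delta_0^{(a)} = \mathcal{K}_0^{(a)} = \frac{a_1}{a_0}\mathcal{K}_0$).
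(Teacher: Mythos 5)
Your proposal is correct and follows essentially the same route as the paper: derive the scaling relation $\partial_a^{\ast}=\frac{a_{k+1}}{a_k}\partial^{\ast}$ from the definition of the adjoint and the rescaled inner products, then substitute into $\mathcal{K}_p^{(a)}$ and $\mathcal{L}_p^{(a)}$, tracking that the adjoint acts at level $p$ in the first case and level $p-1$ in the second. The bookkeeping of levels and the remark on the $p=0$ case match the paper's argument.
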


In particular, for $p=0$ we have $\mathcal{L}_{0}=0$ and, hence,
\begin{equation}
\Delta _{0}^{\left( a\right) }=\frac{a_{1}}{a_{0}}\mathcal{K}_{0}=\frac{a_{1}%
}{a_{0}}\Delta _{0}  \label{De0}
\end{equation}

\begin{proof}
We have, for $u\in \Omega _{p}$ and $\omega \in \Omega _{p+1},$%
\begin{equation*}
\left\langle \partial _{a}^{\ast }u,\omega \right\rangle _{a}=\left\langle
u,\partial \omega \right\rangle _{a}=\frac{1}{a_{p}}\left\langle u,\partial
\omega \right\rangle =\frac{1}{a_{p}}\left\langle \partial ^{\ast }u,\omega
\right\rangle =\frac{a_{p+1}}{a_{p}}\left\langle \partial ^{\ast }u,\omega
\right\rangle _{a}
\end{equation*}%
whence%
\begin{equation*}
\partial _{a}^{\ast }u=\frac{a_{p+1}}{a_{p}}\partial ^{\ast }u\ \ \text{for
all }u\in \Omega _{p}.
\end{equation*}%
It follows that, for $u\in \Omega _{p}$,%
\begin{equation*}
\mathcal{K}_{p}^{(a)}u=\partial _{a}\partial ^{\ast }u=\frac{a_{p+1}}{a_{p}}%
\partial \partial ^{\ast }u=\frac{a_{p+1}}{a_{p}}\mathcal{K}_{p}u,
\end{equation*}%
which proves (\ref{D'}).

If $p=0$ then (\ref{D''}) is trivial as $\mathcal{L}_{p}=0.$ If $p\geq 1$
then we have
\begin{equation*}
\mathcal{L}_{p}^{\left( a\right) }u=\partial _{a}^{\ast }\partial u=\frac{%
a_{p}}{a_{p-1}}\partial ^{\ast }\partial u=\frac{a_{p}}{a_{p-1}}\partial
^{\ast }\partial u=\frac{a_{p}}{a_{p-1}}\mathcal{L}_{p}u,
\end{equation*}%
which proves (\ref{D''}) and, hence, (\ref{De12}).
\end{proof}

\begin{corollary}
\label{CorSpecDea}We have for all $p\geq 0$%
\begin{equation}
\func{spec}_{+}\Delta _{p}^{\left( a\right) }=\frac{a_{p+1}}{a_{p}}\func{spec%
}_{+}\mathcal{K}_{p}\,\sqcup \,\frac{a_{p}}{a_{p-1}}\func{spec}_{+}\mathcal{L%
}_{p}  \label{specDea}
\end{equation}%
and
\begin{equation}
\func{spec}_{+}\Delta _{p}^{\left( a\right) }=\frac{a_{p}}{a_{p-1}}\func{spec%
}_{+}\mathcal{L}_{p}\,\sqcup \,\frac{a_{p+1}}{a_{p}}\func{spec}_{+}\mathcal{L%
}_{p+1}.  \label{specDea+}
\end{equation}
\end{corollary}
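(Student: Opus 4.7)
The plan is to derive Corollary \ref{CorSpecDea} as a direct consequence of Lemma \ref{Lemap} together with Proposition \ref{prop:SpecDecmp}, simply by reapplying the abstract spectral decomposition in the new (weighted) inner-product setting and then rescaling.

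First, observe that Proposition \ref{prop:SpecDecmp} was stated for an abstract chain complex equipped with an arbitrary inner product, so it applies verbatim when we replace the canonical inner product on each $\Omega_p$ by the weighted one $\langle\cdot,\cdot\rangle_a$. In this setting the roles of $\mathcal{K}_p$ and $\mathcal{L}_p$ are played by $\mathcal{K}_p^{(a)}=\partial\partial_a^*|_{\Omega_p}$ and $\mathcal{L}_p^{(a)}=\partial_a^*\partial|_{\Omega_p}$, and $\Delta_p^{(a)}=\mathcal{K}_p^{(a)}+\mathcal{L}_p^{(a)}$. Hence Proposition \ref{prop:SpecDecmp} yields
\begin{equation*}
\func{spec}_+\Delta_p^{(a)}=\func{spec}_+\mathcal{K}_p^{(a)}\sqcup\func{spec}_+\mathcal{L}_p^{(a)}
\quad\text{and}\quad
\func{spec}_+\mathcal{K}_p^{(a)}=\func{spec}_+\mathcal{L}_{p+1}^{(a)}.
\end{equation*}

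Next, invoke Lemma \ref{Lemap}: the operators $\mathcal{K}_p^{(a)}$ and $\mathcal{L}_p^{(a)}$ are scalar multiples of $\mathcal{K}_p$ and $\mathcal{L}_p$, respectively, namely $\mathcal{K}_p^{(a)}=\tfrac{a_{p+1}}{a_p}\mathcal{K}_p$ and $\mathcal{L}_p^{(a)}=\tfrac{a_p}{a_{p-1}}\mathcal{L}_p$. Multiplying a self-adjoint operator by a positive constant scales every eigenvalue by that constant (preserving multiplicities), which in the notation of Section~\ref{sec:eigDecmp} means
\begin{equation*}
\func{spec}_+\mathcal{K}_p^{(a)}=\tfrac{a_{p+1}}{a_p}\func{spec}_+\mathcal{K}_p,\qquad
\func{spec}_+\mathcal{L}_p^{(a)}=\tfrac{a_p}{a_{p-1}}\func{spec}_+\mathcal{L}_p.
\end{equation*}
Substituting these into the first identity above gives identity \eqref{specDea}.

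For identity \eqref{specDea+}, I would now invoke Proposition \ref{prop:SpecDecmp} in its original (canonical) form to rewrite $\func{spec}_+\mathcal{K}_p=\func{spec}_+\mathcal{L}_{p+1}$, and substitute this into the first term of \eqref{specDea}. The coefficient $\tfrac{a_{p+1}}{a_p}$ is exactly the factor needed to match the right-hand side of \eqref{specDea+}, which completes the derivation. There is no real obstacle: once Lemma \ref{Lemap} and Proposition \ref{prop:SpecDecmp} are in hand, the corollary is essentially bookkeeping about scalar multiplication of spectra. The only minor point worth stating explicitly is that the eigenspaces of $\mathcal{K}_p$ and $\mathcal{K}_p^{(a)}$ coincide (and similarly for $\mathcal{L}_p$), so the multiplicities transfer faithfully under the scalar rescaling.
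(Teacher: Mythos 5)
Your proof is correct and follows essentially the same route as the paper: apply Proposition \ref{prop:SpecDecmp} to the chain complex equipped with the weighted inner product (so that $\mathcal{K}_p^{(a)}$, $\mathcal{L}_p^{(a)}$ play the roles of $\mathcal{K}_p$, $\mathcal{L}_p$) and then rescale via Lemma \ref{Lemap}. The only cosmetic difference is that for \eqref{specDea+} the paper applies the weighted form of \eqref{spec+} directly, whereas you combine \eqref{specDea} with the canonical identity $\func{spec}_{+}\mathcal{K}_{p}=\func{spec}_{+}\mathcal{L}_{p+1}$; both yield the same result.
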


\begin{proof}
Indeed, applying (\ref{spec0}) and (\ref{spec+}) to operator $\Delta
_{p}^{\left( a\right) }$, we obtain%
\begin{equation*}
\func{spec}_{+}\Delta _{p}^{\left( a\right) }=\func{spec}_{+}\mathcal{K}%
_{p}^{(a)}\sqcup \func{spec}_{+}\mathcal{L}_{p}^{\left( a\right) }
\end{equation*}%
and
\begin{equation*}
\func{spec}_{+}\Delta _{p}^{\left( a\right) }=\func{spec}_{+}\mathcal{L}%
_{p}^{\left( a\right) }\sqcup \func{spec}_{+}\mathcal{L}_{p+1}^{\left(
a\right) }.
\end{equation*}%
which together with Lemma \ref{Lemap} yields (\ref{specDea}) and (\ref%
{specDea+}).
\end{proof}

\begin{proposition}
\label{Pspeca}For any $p\geq 0$, $\func{spec}\Delta _{p}^{(a)}$ is
determined by the sequence $\left\{ \func{spec}\Delta _{q}\right\}
_{q=0}^{p} $ and the weight $a$. Conversely, $\func{spec}\Delta _{p}$ is
determined by the sequence $\left\{ \func{spec}\Delta _{q}^{(a)}\right\}
_{q=0}^{p}$ and $a $.
\end{proposition}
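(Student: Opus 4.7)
The plan is to express $\func{spec}\Delta_{p}^{(a)}$ iteratively in terms of the auxiliary operators $\mathcal{L}_{q}$ via Corollary~\ref{CorSpecDea}, then reduce $\mathcal{L}_{q}$ to canonical Hodge Laplacians via Lemma~\ref{lem:specDp''}. For the converse we run the same machinery with the roles of $\Delta$ and $\Delta^{(a)}$ interchanged, using that both Proposition~\ref{prop:SpecDecmp} and Lemma~\ref{lem:specDp''} are insensitive to the choice of inner product.

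\textbf{Forward direction.} Equation~(\ref{specDea+}) of Corollary~\ref{CorSpecDea} gives
\[
\func{spec}_{+}\Delta_{p}^{(a)}=\tfrac{a_{p}}{a_{p-1}}\func{spec}_{+}\mathcal{L}_{p}\,\sqcup\,\tfrac{a_{p+1}}{a_{p}}\func{spec}_{+}\mathcal{L}_{p+1}.
\]
By Lemma~\ref{lem:specDp''}, each of $\func{spec}_{+}\mathcal{L}_{p}$ and $\func{spec}_{+}\mathcal{L}_{p+1}$ is an explicit alternating combination (disjoint union minus disjoint union) of terms from $\{\func{spec}_{+}\Delta_{q}\}_{q=0}^{p}$. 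Hence $\func{spec}_{+}\Delta_{p}^{(a)}$ is determined by $\{\func{spec}\Delta_{q}\}_{q=0}^{p}$ and $a$. To pass from $\func{spec}_{+}$ to the full $\func{spec}$, note that $\ker\Delta_{p}^{(a)}$ has dimension $\dim H_{p}$ by the Hodge isomorphism, which is independent of the inner product; in particular the multiplicity of $0$ in $\Delta_{p}^{(a)}$ equals that in $\Delta_{p}$ and is read off directly from $\func{spec}\Delta_{p}$.

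\textbf{Converse direction.} Proposition~\ref{prop:SpecDecmp} applied to the canonical Laplacians gives
\[
\func{spec}_{+}\Delta_{p}=\func{spec}_{+}\mathcal{L}_{p}\,\sqcup\,\func{spec}_{+}\mathcal{L}_{p+1},
\]
while Lemma~\ref{Lemap} yields $\mathcal{L}_{q}=\tfrac{a_{q-1}}{a_{q}}\mathcal{L}_{q}^{(a)}$, so it suffices to recover $\func{spec}_{+}\mathcal{L}_{q}^{(a)}$ for $q\le p+1$ from $\{\func{spec}\Delta_{r}^{(a)}\}_{r=0}^{p}$. The proof of Lemma~\ref{lem:specDp''} invokes only the multiplicity identity $m_{q}=\ell_{q}+\ell_{q+1}$ (equation~(\ref{spec++})) together with $\mathcal{L}_{0}=0$. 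Since Proposition~\ref{prop:SpecDecmp} is abstract (stated in Section~\ref{sec:eigDecmp} for an arbitrary inner product) and $\mathcal{L}_{0}^{(a)}=0$ by Lemma~\ref{Lemap}, the identical recursion expresses $\func{spec}_{+}\mathcal{L}_{q}^{(a)}$ in terms of $\{\func{spec}_{+}\Delta_{r}^{(a)}\}_{r=0}^{q-1}$, and rescaling by $a_{q-1}/a_{q}$ gives $\func{spec}_{+}\mathcal{L}_{q}$. The zero multiplicity is again intrinsic, equal to $\dim H_{p}$.

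\textbf{Main obstacle.} There is no serious difficulty; the one point requiring verification is that Lemma~\ref{lem:specDp''}, although stated for the canonical Hodge Laplacians on a digraph, transfers verbatim to the weighted setting. This is immediate once one observes that both ingredients of its proof are inner-product agnostic. The remainder is bookkeeping: tracking the scaling factors $a_{q}/a_{q-1}$ and separating off the common kernel of dimension $\dim H_{p}$.
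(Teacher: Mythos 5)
Your proof is correct and follows essentially the same route as the paper: the forward direction via Corollary~\ref{CorSpecDea} (equation~(\ref{specDea+})) plus Lemma~\ref{lem:specDp''}, with the kernel handled through $\dim H_{p}$. The paper dismisses the converse with ``proved in the same way''; your explicit verification that Lemma~\ref{lem:specDp''} transfers to the weighted setting (since its proof rests only on the inner-product-agnostic Proposition~\ref{prop:SpecDecmp} and $\mathcal{L}_{0}^{(a)}=0$) is exactly the intended argument.
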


\begin{proof}
By (\ref{specDea+}), $\func{spec}_{+}\Delta _{p}^{(a)}$ is determined by $%
\func{spec}_{+}\mathcal{L}_{p}$ and $\func{spec}_{+}\mathcal{L}_{p+1}$
(together with $a$). By Lemma \ref{lem:specDp''}, $\func{spec}_{+}\mathcal{L}%
_{p}$ is determined by $\left\{ \func{spec}_{+}\Delta _{q}\right\}
_{q=0}^{p-1}$, and similarly $\func{spec}_{+}\mathcal{L}_{p+1}$ is
determined by $\left\{ \func{spec}_{+}\Delta _{q}\right\} _{q=0}^{p}$.
Hence, $\func{spec}_{+}\Delta _{p}^{(a)}$ is determined by $\left\{ \func{%
spec}_{+}\Delta _{q}\right\} _{q=0}^{p}.$

It remains to handle $0$ as an eigenvalue of $\Delta _{p}^{(a)}$. The
multiplicity of $0$ is equal to $\dim H_{p}$ and is independent of the
weight $a$. Hence, the full spectrum $\func{spec}\Delta _{p}^{(a)}$ is
determined by the sequence $\left\{ \func{spec}\Delta _{q}\right\}
_{q=0}^{p}.$

The second claim is proved in the same way.
\end{proof}

\begin{corollary}
\label{Corspeca}Assume that $\Omega _{r}=\left\{ 0\right\} $ for all $r>p$.
Then $\func{spec}\Delta _{p}^{(a)}$ is determined by the sequence $\left\{
\func{spec}\Delta _{q}\right\} _{q=0}^{p-1}$, the Euler characteristic $\chi
(G)$ and the weight $a$.
\end{corollary}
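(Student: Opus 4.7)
The plan is to combine Proposition~\ref{Pspeca} with the extra input coming from the finiteness hypothesis $\Omega_{r}=\{0\}$ for all $r>p$. By Proposition~\ref{Pspeca}, $\func{spec}\Delta_{p}^{(a)}$ is already determined by the sequence $\{\func{spec}\Delta_{q}\}_{q=0}^{p}$ together with $a$, so it suffices to show that the top entry $\func{spec}\Delta_{p}$ is itself recoverable from $\{\func{spec}\Delta_{q}\}_{q=0}^{p-1}$ and $\chi(G)$. I would split this task into two parts: recovery of the positive spectrum $\func{spec}_{+}\Delta_{p}$ and recovery of the multiplicity of the eigenvalue $0$.

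For the positive part, I would use the hypothesis $\Omega_{p+1}=\{0\}$, which forces $\mathcal{L}_{p+1}=0$ and, hence, $\func{spec}_{+}\mathcal{L}_{p+1}=\varnothing$. Plugging this into identity~\eqref{spec+} of Proposition~\ref{prop:SpecDecmp} yields
\[
\func{spec}_{+}\Delta_{p}=\func{spec}_{+}\mathcal{L}_{p}.
\]
Now Lemma~\ref{lem:specDp''} expresses $\func{spec}_{+}\mathcal{L}_{p}$ as an explicit disjoint-union/subtraction combination of $\{\func{spec}_{+}\Delta_{q}\}_{q=0}^{p-1}$, which settles the positive part.

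For the zero eigenvalue, I would use the fact recalled in Section~\ref{sec:eigDecmp} that the multiplicity of $0$ in $\func{spec}\Delta_{q}$ equals $\dim H_{q}$. Since the chain complex terminates at level $p$, the Euler characteristic satisfies
\[
\chi(G)=\sum_{k=0}^{p}(-1)^{k+1}\dim H_{k},
\]
so one can solve for $\dim H_{p}$ in terms of $\chi(G)$ and $\dim H_{0},\dots ,\dim H_{p-1}$; each of the latter is read off from $\func{spec}\Delta_{q}$ for $q\leq p-1$. Hence the multiplicity of $0$ in $\func{spec}\Delta_{p}$ is determined by $\{\func{spec}\Delta_{q}\}_{q=0}^{p-1}$ and $\chi(G)$.

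Combining these two steps reconstructs the full $\func{spec}\Delta_{p}$ from $\{\func{spec}\Delta_{q}\}_{q=0}^{p-1}$ and $\chi(G)$, and then Proposition~\ref{Pspeca} finishes the proof. There is no real obstacle here: the result is essentially an assembly of the already-established machinery, with the only mild subtlety being the need to keep track separately of the zero eigenvalue, which is where the Euler characteristic enters as the sole additional piece of information.
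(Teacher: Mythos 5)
Your proof is correct and follows essentially the same route as the paper: both rest on the observation that $\Omega_{p+1}=\{0\}$ kills $\mathcal{L}_{p+1}$ so that the positive spectrum reduces to $\func{spec}_{+}\mathcal{L}_{p}$ (handled by Lemma~\ref{lem:specDp''}), and both recover $\dim H_{p}$, i.e.\ the multiplicity of $0$, from $\chi(G)$ and the lower homology dimensions read off the spectra. The only cosmetic difference is that you first reconstruct $\func{spec}\Delta_{p}$ and then invoke Proposition~\ref{Pspeca}, whereas the paper passes directly to $\func{spec}_{+}\Delta_{p}^{(a)}$ via the identity behind that proposition; the ingredients are identical.
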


\begin{proof}
Since $\Omega _{p+1}=\left\{ 0\right\} $, by the argument from the previous
proof $\func{spec}_{+}\Delta _{p}^{(a)}$ is determined by $\func{spec}_{+}%
\mathcal{L}_{p}$ and, hence, by $\left\{ \func{spec}_{+}\Delta _{q}\right\}
_{q=0}^{p-1}.$ It remains to observe that the multiplicity of $0$ as an
eigenvalue of $\Delta _{p}^{(a)}$, that is, $\dim H_{p}$, is determined by $%
\left\{ \func{spec}\Delta _{q}\right\} _{q=0}^{p-1}$ and $\chi $, which
follows from
\begin{equation*}
\chi =\dim H_{0}-\dim H_{1}+...+\left( -1\right) ^{p}\dim H_{p-1}+\left(
-1\right) ^{p+1}\dim H_{p}.
\end{equation*}
\end{proof}

\section{Normalized Hodge Laplacian}

\label{sec:nHodge}\setcounter{equation}{0}Consider now the following
specific weight
\begin{equation}
a_{p}=p!.  \label{ap}
\end{equation}

\begin{definition}
The inner product $\left\langle \cdot ,\cdot \right\rangle _{a}$ with the
weight (\ref{ap}) will be referred to as the \emph{normalized} inner
product, and the corresponding weighted Hodge Laplacian will be called the
\emph{normalized} Hodge Laplacian.
\end{definition}

For example, for the normalized Hodge Laplacian from (\ref{specDea}) and (%
\ref{specDea+}) that%
\begin{equation}
\func{spec}_{+}\Delta _{p}^{\left( a\right) }=\left( p+1\right) \func{spec}%
_{+}\mathcal{K}_{p}\,\sqcup \,p\func{spec}_{+}\mathcal{L}_{p}
\label{specDen}
\end{equation}%
and
\begin{equation}
\func{spec}_{+}\Delta _{p}^{\left( a\right) }=p\func{spec}_{+}\mathcal{L}%
_{p}\,\sqcup \,(p+1)\func{spec}_{+}\mathcal{L}_{p+1}.  \label{specDen+}
\end{equation}%
Besides, it follows from (\ref{De0}) that%
\begin{equation}
\Delta _{0}^{\left( a\right) }=\Delta _{0}  \label{De0n}
\end{equation}

The significance of the weight (\ref{ap}) is determined by the following
statement, where we use the Cartesian product $X\Box Y$ of two digraphs.

\begin{proposition}
\emph{\cite[Lemma 4.7]{GLY2024}} Let $a$ be the weight \emph{(\ref{ap})}.
Let $X$ and $Y$ be two digraphs. Then, for $u\in \Omega _{p}\left( X\right) $%
, $v\in \Omega _{q}\left( Y\right) $ (where $p,q\geq 0$) and $r=p+q$,%
\begin{equation}
\Delta _{r}^{\left( a\right) }\left( u\times v\right) =\Delta _{p}^{\left(
a\right) }u\times v+u\times \Delta _{q}^{\left( a\right) }v.
\label{aproduct}
\end{equation}
\end{proposition}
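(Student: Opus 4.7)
The plan is to derive the product rule for $\Delta_r^{(a)}$ from a Leibniz-type rule for the weighted adjoint $\partial_a^*$, which is itself deduced from the given Leibniz rule (\ref{pr}) for $\partial$ together with the fact that the normalized inner product is multiplicative on cross products. This last multiplicativity is the whole reason the weight $a_p = p!$ appears.

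First I would verify the multiplicativity
$$\langle e_x \times e_y,\ e_{x'} \times e_{y'} \rangle_a = \langle e_x, e_{x'} \rangle_a\, \langle e_y, e_{y'} \rangle_a$$
for elementary regular paths $e_x$ of length $p$ on $X$ and $e_y$ of length $q$ on $Y$. When $(x,y)\neq(x',y')$ both sides vanish: projecting a stair-like path on $Z$ recovers its two factors, so $\Sigma_{x,y}$ and $\Sigma_{x',y'}$ are disjoint. When $(x,y)=(x',y')$, the elementary paths $e_z$ appearing in $e_x\times e_y$ are pairwise distinct with $\|e_z\|_a^2 = 1/(p+q)!$, and the combinatorial identity $|\Sigma_{x,y}| = \binom{p+q}{p}$ yields $\|e_x\times e_y\|_a^2 = 1/(p!\,q!)$. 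Bilinearity then extends multiplicativity to all paths. This is precisely where the weight $a_p = p!$ is essential.

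Second, I would establish the Leibniz rule
$$\partial_a^*(u \times v) = \partial_a^* u \times v + (-1)^p\, u \times \partial_a^* v \qquad (u \in \Omega_p(X),\ v \in \Omega_q(Y)).$$
Both sides live in $\Omega_{p+q+1}(X\Box Y)$, so by the K\"unneth isomorphism (\ref{kun}) it suffices to match pairings against test elements of the form $w_1\times w_2$ with $w_1\in\Omega_i(X)$, $w_2\in\Omega_j(Y)$, $i+j=p+q+1$. Applying (\ref{pr}) to $\partial(w_1\times w_2)$, using the definition of $\partial_a^*$ and the multiplicativity from step one, one sees that the pairing is supported on the two cases $(i,j)\in\{(p+1,q),(p,q+1)\}$, and in each case the pairings with the left and right hand sides agree (the sign $(-1)^p$ being precisely the $(-1)^i$ that appears when $i=p$).

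Finally I would expand
$$\Delta_r^{(a)}(u\times v) = \partial\,\partial_a^*(u\times v) + \partial_a^*\,\partial(u\times v)$$
by applying (\ref{pr}) and the Leibniz rule from step two. The expansion produces eight summands. Four of them assemble cleanly into $\Delta_p^{(a)}u\times v + u\times \Delta_q^{(a)}v$, while the four mixed terms of shape $\partial_a^*u\times \partial v$ and $\partial u\times \partial_a^*v$ cancel in pairs because they come with opposite signs $(-1)^{p+1}$ vs $(-1)^p$, and $(-1)^p$ vs $(-1)^{p-1}$, respectively. The main obstacle is step one: the sign cancellations in step three are purely formal and work for \emph{any} weight, whereas only the factorial weight makes $\langle\cdot,\cdot\rangle_a$ multiplicative on cross products, and without that multiplicativity the Leibniz rule for $\partial_a^*$ fails.
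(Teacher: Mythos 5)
Your proposal is correct and follows exactly the route the paper attributes to the cited reference \cite[Lemma 4.7]{GLY2024}: the paper itself gives no proof but lists as its ingredients the Leibniz rule (\ref{pr}), the K\"unneth formula (\ref{kun}), and the multiplicativity $\langle u\times v,\varphi\times\psi\rangle_a=\langle u,\varphi\rangle_a\langle v,\psi\rangle_a$ forced by the weight $a_p=p!$, which are precisely your three steps. Your sign bookkeeping in the final expansion and your identification of the factorial weight as the essential (non-formal) input are both accurate.
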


Note that the proof of (\ref{aproduct}) in \cite{GLY2024} is based on the
following ingredients:

\begin{enumerate}
\item the product rule (\ref{pr}) for $\partial $;

\item the K\"{u}nneth formula (\ref{kun});

\item if $\ u\in \mathcal{A}_{p}\left( X\right) $, $v\in \mathcal{A}%
_{q}\left( Y\right) $, $\varphi \in \mathcal{A}_{p^{\prime }}\left( X\right)
$ and $\psi \in \mathcal{A}_{q^{\prime }}\left( Y\right) $ then%
\begin{equation*}
\langle u\times v,\varphi \times \psi \rangle =\tbinom{p+q}{p}\langle
u,\varphi \rangle \langle v,\psi \rangle ,
\end{equation*}%
which implies for the weight (\ref{ap}) that
\begin{equation*}
\langle u\times v,\varphi \times \psi \rangle _{a}=\langle u,\varphi \rangle
_{a}\langle v,\psi \rangle _{a}.
\end{equation*}
\end{enumerate}

The following statement is a combination of the argument of separation of
variables, based on (\ref{aproduct}), and the K\"{u}nneth formula (\ref{kun}%
).

\begin{proposition}
\label{Tdeltaa}For the weight \emph{(\ref{ap})} we have, for any $r\geq 0$,%
\begin{equation}
\func{spec}\Delta _{r}^{\left( a\right) }\left( X\Box Y\right)
=\tbigsqcup\limits_{\left\{ p,q\geq 0:\,p+q=r\right\} }\left( \func{spec}%
\Delta _{p}^{\left( a\right) }\left( X\right) +\func{spec}\Delta
_{q}^{\left( a\right) }\left( Y\right) \right) .  \label{Der}
\end{equation}
\end{proposition}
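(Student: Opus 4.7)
The strategy is classical separation of variables, made possible by the product rule (\ref{aproduct}) and the multiplicativity of the normalized inner product. For each pair $(p,q)$ with $p+q=r$, I would start by choosing an orthonormal basis $\{u_i^{(p)}\}$ of $\Omega_p(X)$ consisting of eigenvectors of $\Delta_p^{(a)}$ with eigenvalues $\alpha_i^{(p)}$, and similarly an orthonormal basis $\{v_j^{(q)}\}$ of $\Omega_q(Y)$ consisting of eigenvectors of $\Delta_q^{(a)}$ with eigenvalues $\beta_j^{(q)}$ (both with respect to the normalized inner product $\langle\cdot,\cdot\rangle_a$). This is possible since $\Delta_p^{(a)}$ and $\Delta_q^{(a)}$ are self-adjoint with respect to the weighted inner product.

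Next, I would form the cross products $u_i^{(p)}\times v_j^{(q)}$ and check two properties. First, by the product rule (\ref{aproduct}),
\begin{equation*}
\Delta_r^{(a)}\bigl(u_i^{(p)}\times v_j^{(q)}\bigr)=\Delta_p^{(a)}u_i^{(p)}\times v_j^{(q)}+u_i^{(p)}\times \Delta_q^{(a)}v_j^{(q)}=\bigl(\alpha_i^{(p)}+\beta_j^{(q)}\bigr)\,u_i^{(p)}\times v_j^{(q)},
\end{equation*}
so each such cross product is an eigenvector of $\Delta_r^{(a)}(X\Box Y)$ with eigenvalue $\alpha_i^{(p)}+\beta_j^{(q)}$. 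Second, by the multiplicativity of the normalized inner product recalled in item (3) above the Proposition,
\begin{equation*}
\bigl\langle u_i^{(p)}\times v_j^{(q)},\,u_{i'}^{(p')}\times v_{j'}^{(q')}\bigr\rangle_a=\langle u_i^{(p)},u_{i'}^{(p')}\rangle_a\,\langle v_j^{(q)},v_{j'}^{(q')}\rangle_a,
\end{equation*}
which vanishes unless $(p,q)=(p',q')$ (since otherwise the spaces $\Omega_p(X)\otimes\Omega_q(Y)$ sit in different Künneth summands, and indices must match), and within a fixed $(p,q)$ equals $\delta_{ii'}\delta_{jj'}$. Thus the family $\{u_i^{(p)}\times v_j^{(q)}\}$, as $(p,q,i,j)$ ranges over all admissible indices with $p+q=r$, is orthonormal in $\Omega_r(X\Box Y)$.

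To conclude, I would invoke the Künneth formula (\ref{kun}), which gives
\begin{equation*}
\dim\Omega_r(X\Box Y)=\sum_{p+q=r}\dim\Omega_p(X)\cdot\dim\Omega_q(Y),
\end{equation*}
matching exactly the cardinality of the orthonormal family produced above. Therefore $\{u_i^{(p)}\times v_j^{(q)}\}_{p+q=r}$ is an orthonormal eigenbasis of $\Delta_r^{(a)}(X\Box Y)$, and the multiset of eigenvalues is precisely $\bigsqcup_{p+q=r}(\func{spec}\Delta_p^{(a)}(X)+\func{spec}\Delta_q^{(a)}(Y))$, which is (\ref{Der}).

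The only potentially delicate point is orthogonality across different Künneth summands: one needs that the isomorphism in (\ref{kun}) is not just linear but compatible with the normalized inner product, i.e.\ that cross products of bases from $\Omega_p(X)\otimes\Omega_q(Y)$ and $\Omega_{p'}(X)\otimes\Omega_{q'}(Y)$ with $(p,q)\neq(p',q')$ are orthogonal. Since such cross products live in $\Omega_r(X\Box Y)$ but involve stair-like paths with different numbers of vertical versus horizontal steps, their supports on elementary paths are disjoint, so orthogonality follows directly from the definition (\ref{canon}) of the canonical inner product and hence of its normalized version. This is the main technical point to verify carefully; the rest is a direct application of the eigenvalue bookkeeping described above.
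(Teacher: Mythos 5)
Your proposal is correct and follows essentially the same route as the paper: use the product rule (\ref{aproduct}) to see that cross products of eigenvectors are eigenvectors with summed eigenvalues, and use the K\"{u}nneth formula (\ref{kun}) to see that these cross products form a basis of $\Omega_r(X\Box Y)$, so the multiset of their eigenvalues is the full spectrum. The extra orthonormality verification you add (multiplicativity of $\left\langle \cdot,\cdot\right\rangle_a$ within a summand, disjoint supports across summands) is sound but not needed, since the K\"{u}nneth isomorphism already guarantees that the images of bases form a basis, and a basis of eigenvectors suffices to read off the spectrum with multiplicities.
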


\begin{proof}
Observe that if $u\in \Omega _{p}\left( X\right) $ and $v\in \Omega
_{q}\left( Y\right) $ are eigenvectors such that
\begin{equation*}
\Delta _{p}^{(a)}u=\lambda u\ \ \text{and\ \ }\Delta _{q}^{(a)}v=\mu v,
\end{equation*}%
then we have by (\ref{aproduct}) for $r=p+q$:
\begin{equation*}
\Delta _{r}^{(a)}\left( u\times v\right) =(\Delta _{p}^{(a)}u)\ast v+u\ast
\Delta _{q}^{(a)}v=\left( \lambda +\mu \right) \left( u\times v\right) ,
\end{equation*}%
that is, $u\times v$ is an eigenvector of $\Delta _{r}^{(a)}$ on $X\Box Y$
with the eigenvalue $\lambda +\mu $.

In each $\Omega _{p}\left( X\right) $ there is a basis that consists of
eigenvectors of $\Delta _{p}^{(a)}$; denote by $\left\{ u_{k}\right\} $ the
union of all such bases of $\Omega _{p}\left( X\right) $ across all $p\geq 0$%
, with the corresponding eigenvalues $\left\{ \lambda _{k}\right\} $. Let $%
\left\{ v_{l}\right\} $ be a similar sequence on $Y$ with the eigenvalues $%
\left\{ \mu _{l}\right\} .$ By the K\"{u}nneth formula (\ref{kun}), we have,
for any $r\geq 0$,%
\begin{equation}
\Omega _{r}\left( X\Box Y\right) \cong \tbigoplus_{\left\{ p,q\geq
0:p+q=r\right\} }\left( \Omega _{p}\left( X\right) \otimes \Omega _{q}\left(
Y\right) \right) ,  \label{KunOm}
\end{equation}%
where the isomorphism is given by $u\otimes v\mapsto u\times v$. It follows
that $\Omega _{r}\left( X\Box Y\right) $ has a basis
\begin{equation*}
\left\{ u_{k}\times v_{l}:\left\vert u_{k}\right\vert +\left\vert
v_{l}\right\vert =r\right\} ,
\end{equation*}%
where $\left\vert \cdot \right\vert $ denotes here the length of paths. The
elements of this basis are the eigenvectors of $\Delta _{r}^{(a)}$ on $X\Box
Y$ with eigenvalues $\lambda _{k}+\mu _{l}$, whence (\ref{specjoin}) follows.
\end{proof}

\section{Hodge Laplacian on Cartesian products}

\label{SecHodgeProduct}\setcounter{equation}{0}

\subsection{Spectrum of the normalized Hodge Laplacian on $G^{n}$}

\label{sec:Gn}Let $G=\left( V,E\right) $ be any finite connected digraph and
$a$ be the weight (\ref{ap}). Assuming that $\func{spec}\Delta _{p}^{(a)}(G)$
is known for all $p\geq 0$, the spectrum $\func{spec}\Delta _{p}^{\left(
a\right) }(G^{n})$ can be determined by induction using Proposition~\ref%
{Tdeltaa}. We provide here an explicit expression for $\func{spec}\Delta
_{p}^{(a)}(G^{n})$ in the case when the spaces $\Omega _{p}(G)$ are trivial
for $p>1$. It follows from the K\"{u}nneth formula \ref{KunOm} that $\Omega
_{r}(G^{n})=\left\{ 0\right\} $ for $r>n$.

In Theorem \ref{TspecDeaGn} below we obtain formulas for $\func{spec}\Delta
_{r}^{\left( a\right) }(G^{n})$ for all $0\leq r\leq n$ using $\func{spec}%
\Delta _{0}(G)$ only. We start with a lemma.

\begin{lemma}
Assume that $\Omega _{p}(G)=\left\{ 0\right\} $ for all $p\geq 2.$ Then, for
all $0\leq r\leq n$, we have
\begin{align}
\func{spec}\Delta _{r}^{(a)}(G^{n})& =\left\{ (\alpha _{1}+\cdots +\alpha
_{n-r}+\beta _{1}+\cdots +\beta _{r})_{\binom{n}{r}}\,\right\vert  \notag \\
& \ \ \ \ \ \ \ \left. \alpha _{i}\in \func{spec}\Delta
_{0}^{(a)}(G),\,\beta _{j}\in \func{spec}\Delta _{1}^{(a)}(G)\}\right\} .
\label{ab}
\end{align}
\end{lemma}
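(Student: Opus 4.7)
My plan is to prove the formula (\ref{ab}) by induction on $n$, using Proposition \ref{Tdeltaa} together with the vanishing assumption $\Omega_p(G) = \{0\}$ for $p \geq 2$ to sharply restrict the terms appearing in the K\"unneth decomposition.

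For the base case $n = 1$, only $r \in \{0, 1\}$ are admissible. The formula reduces to $\func{spec}\Delta_0^{(a)}(G) = \{\alpha_1 : \alpha_1 \in \func{spec}\Delta_0^{(a)}(G)\}$ counted $\binom{1}{0} = 1$ times and $\func{spec}\Delta_1^{(a)}(G) = \{\beta_1 : \beta_1 \in \func{spec}\Delta_1^{(a)}(G)\}$ counted $\binom{1}{1} = 1$ times, both trivially true.

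For the inductive step, I would write $G^n = G^{n-1} \Box G$ and apply Proposition~\ref{Tdeltaa}:
\begin{equation*}
\func{spec}\Delta_r^{(a)}(G^n) = \bigsqcup_{p+q=r} \left( \func{spec}\Delta_p^{(a)}(G^{n-1}) + \func{spec}\Delta_q^{(a)}(G) \right).
\end{equation*}
Since $\Omega_q(G) = \{0\}$ for $q \geq 2$ forces $\func{spec}\Delta_q^{(a)}(G) = \emptyset$ for $q \geq 2$ (and consequently makes the sum empty by the convention stated in Section~\ref{sec:eigDecmp}), only the contributions from $q = 0$ and $q = 1$ survive, corresponding to $p = r$ and $p = r-1$. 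By the inductive hypothesis, the first contribution consists of all sums of $(n-1-r)$ elements from $\func{spec}\Delta_0^{(a)}(G)$ and $r$ elements from $\func{spec}\Delta_1^{(a)}(G)$, each with multiplicity $\binom{n-1}{r}$, to which an additional $\alpha \in \func{spec}\Delta_0^{(a)}(G)$ is added; the second yields sums of $(n-r)$ alphas and $(r-1)$ betas with multiplicity $\binom{n-1}{r-1}$, to which an additional $\beta \in \func{spec}\Delta_1^{(a)}(G)$ is added.

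Both contributions produce precisely the expressions $\alpha_1 + \cdots + \alpha_{n-r} + \beta_1 + \cdots + \beta_r$, and the total multiplicity is $\binom{n-1}{r} + \binom{n-1}{r-1} = \binom{n}{r}$ by Pascal's identity, completing the induction. The only subtlety I anticipate is making sure the vanishing $\Omega_q(G) = \{0\}$ is correctly transferred to the assertion $\func{spec}\Delta_q^{(a)}(G) = \emptyset$ (so that addition with this empty sequence gives the empty sequence, per the convention in Section~\ref{sec:eigDecmp}), and carefully tracking that the $\binom{n}{r}$ multiplicity in the statement reflects the number of ways of interleaving the $\alpha$'s and $\beta$'s across the $n$ factors of $G^n$.
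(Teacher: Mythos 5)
Your proposal is correct and follows essentially the same route as the paper: induction on $n$ via $G^{n}=G^{n-1}\Box G$, applying Proposition \ref{Tdeltaa}, discarding the terms with $q\geq 2$ because $\Omega _{q}(G)=\{0\}$ makes those spectra empty, and combining the two surviving contributions with Pascal's identity $\binom{n-1}{r}+\binom{n-1}{r-1}=\binom{n}{r}$. The subtlety you flag about empty sequences under addition is exactly the convention the paper relies on, so nothing is missing.
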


\begin{proof}
This identity is trivial for $n=1$. For the induction step from $n-1$ to $n$%
, we obtain, using (\ref{Der}) and the induction hypothesis, that
\begin{align*}
\func{spec}\Delta _{r}^{(a)}(G^{n})& =\tbigsqcup\limits_{\left\{ p,q\geq
0:\,p+q=r\right\} }\left( \func{spec}\Delta _{p}^{\left( a\right) }(G^{n-1})+%
\func{spec}\Delta _{q}^{\left( a\right) }(G)\right) \\
& =\left( \func{spec}\Delta _{r}^{(a)}(G^{n-1})+\func{spec}\Delta
_{0}^{(a)}(G)\right) \ \ \ \ \ \ \ \ \ \ \ \ \ \ \ (p=r,q=0) \\
& \ \ \ \ \ \ \sqcup \left( \func{spec}\Delta _{r-1}^{(a)}(G^{n-1})+\func{%
spec}\Delta _{1}^{(a)}(G)\right) \ \ \ \ \ \ \ \ \ (p\underset{}{=}r-1,q%
\underset{}{=}1) \\
& =\{(\alpha _{1}+\cdots +\alpha _{n-r-1}+\beta _{1}+\cdots +\beta _{r})_{%
\binom{n-1}{r}}+\alpha _{n-r}\} \\
& \quad \ \ \sqcup \{(\alpha _{1}+\cdots +\alpha _{(n-1)-(r-1)}+\beta
_{1}+\cdots +\beta _{r-1})_{\binom{n-1}{r-1}}+\beta _{r}\} \\
& =\{(\alpha _{1}+\cdots +\alpha _{n-r}+\beta _{1}+\cdots +\beta _{r})_{%
\binom{n}{r}}\},
\end{align*}%
where $\alpha _{i}\in \func{spec}\Delta _{0}^{(a)}(G)$ and $\beta _{j}\in
\func{spec}\Delta _{1}^{(a)}(G)$.
\end{proof}

Here is our main result about the spectrum of the normalized Hodge Laplacian.

\begin{theorem}
\label{TspecDeaGn}Assume that $G$ is a connected digraph containing neither
double arrow, nor triangle nor square. Let $\lambda _{1},...,\lambda _{s}$
be all distinct positive eigenvalues of $\Delta _{0}(G)$, $0<\lambda
_{1}<...<\lambda _{s}$, and let $m_{1},...,m_{s}$ be their multiplicities.

$\left( a\right) $ Assume that $G$ has $v$ vertices and $v$ arrows, where $%
v\geq 3.$ Then, for all $0\leq r\leq n$, we have%
\begin{eqnarray}
\func{spec}\Delta _{r}^{(a)}(G^{n}) &=&\left\{ \left( k_{1}\lambda
_{1}+...+k_{s}\lambda _{s}\right) _{m_{1}^{k_{1}}...m_{s}^{k_{s}}\binom{n}{%
k_{1}\,\ ...\ \ k_{s}\,}\binom{n}{r}}\ \ \right\vert  \notag \\
&&\ \ \ \ \ \ \ \left. k_{1},...,k_{s}\geq 0,\ k_{1}+...+k_{s}\underset{}{%
\leq }n\ \right\} ,  \label{kla}
\end{eqnarray}%
where $\binom{n}{k_{1}\,\ ...\ \ k_{s}\,}$ is a multinomial coefficient.
Consequently,
\begin{equation*}
\lambda _{\max }(\Delta _{r}^{(a)}(G^{n}))=\left( n\lambda _{s}\right)
_{m_{s}^{n}\binom{n}{r}}\ \ \ \text{and\ \ \ }\lambda _{\min }(\Delta
_{r}^{(a)}(G^{n}))=0_{\binom{n}{r}}.
\end{equation*}

$\left( b\right) \ $Assume that $G$ has $v$ vertices and $v-1$ arrows, where
$v\geq 2.$ Then, for all $0\leq r\leq n$, we have%
\begin{align}
\func{spec}\Delta _{r}^{(a)}(G^{n})& =\left\{ (k_{1}\lambda
_{1}+...+k_{s}\lambda _{s})_{m_{1}^{k_{1}}...m_{s}^{k_{s}}\binom{n}{k_{1}\
...\ k_{s}}\binom{k_{1}+...+k_{s}}{r}}\,~\right\vert  \notag \\
& \ \ \ \ \ \ \ \ \ \ \ \ \ \ \left. k_{1},...,k_{s}\geq 0,\ r\leq
k_{1}+...+k_{s}\underset{}{\leq }n\right\} .  \label{kla1}
\end{align}%
Consequently,
\begin{equation*}
\lambda _{\max }(\Delta _{r}^{(a)}(G^{n}))=\left( n\lambda _{s}\right)
_{m_{s}^{n}\binom{n}{r}}\ \ \ \text{and\ \ \ }\lambda _{\min }(\Delta
_{r}^{(a)}(G^{n}))=\left\{ r\lambda _{1}\right\} _{m_{1}^{r}\binom{n}{r}}.
\end{equation*}
\end{theorem}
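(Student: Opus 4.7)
The plan is to reduce Theorem \ref{TspecDeaGn} to the preceding lemma (formula \eqref{ab}) by explicitly identifying the two building-block spectra $\func{spec}\Delta_{0}^{(a)}(G)$ and $\func{spec}\Delta_{1}^{(a)}(G)$, and then performing a multinomial count of the tuples that lemma enumerates.

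First, because $G$ contains neither double arrows, triangles, nor squares, Proposition~\ref{POm2} yields $\Omega_{2}(G)=\{0\}$, and Proposition~\ref{Ppq} then forces $\Omega_{p}(G)=\{0\}$ for all $p\geq 2$. This justifies invoking the preceding lemma; it also makes $\mathcal{K}_{1}=\partial\partial^{\ast}|_{\Omega_{1}}$ vanish (its image lies in $\Omega_{2}=\{0\}$), so $\Delta_{1}^{(a)}=\mathcal{L}_{1}=\Delta_{1}$, while by \eqref{De0n} we also have $\Delta_{0}^{(a)}=\Delta_{0}$. Example~\ref{ExOm2=0} gives $\func{spec}_{+}\Delta_{1}=\func{spec}_{+}\Delta_{0}$, so the two operators share the positive spectrum $\{\lambda_{1}^{m_{1}},\ldots,\lambda_{s}^{m_{s}}\}$; they differ only in the multiplicity of the eigenvalue $0$, which equals $\dim H_{0}=1$ for $\Delta_{0}^{(a)}$ and $\dim H_{1}(G)$ for $\Delta_{1}^{(a)}$. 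The Euler characteristic $\chi(G)=-v+e$ then gives $\dim H_{1}=1$ in case~$(a)$ (where $e=v$) and $\dim H_{1}=0$ in case~$(b)$ (where $e=v-1$).

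Next I would plug these spectra into \eqref{ab}. In case~$(a)$ the two spectra coincide as multisets, so an ordered tuple $(\alpha_{1},\ldots,\alpha_{n-r},\beta_{1},\ldots,\beta_{r})$ summing to $k_{1}\lambda_{1}+\cdots+k_{s}\lambda_{s}$ corresponds to placing $k_{i}$ copies of $\lambda_{i}$ and $n-\sum k_{i}$ zeros into $n$ positions. This contributes $\binom{n}{k_{1},\ldots,k_{s}}$ placements of values, each carrying the eigenvector weight $\prod m_{i}^{k_{i}}$; multiplication by the external factor $\binom{n}{r}$ from \eqref{ab} produces exactly the multiplicity claimed in \eqref{kla}. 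The extremes $\lambda_{\max}=n\lambda_{s}$ (take $k_{s}=n$) and $\lambda_{\min}=0$ (take all $k_{i}=0$) follow at once.

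Case~$(b)$ is the main obstacle. Here $0\notin\func{spec}\Delta_{1}^{(a)}(G)$, so every $\beta_{j}$ must be positive, forcing $K:=k_{1}+\cdots+k_{s}\geq r$. Writing $b_{i}$ for the number of $\lambda_{i}$'s landing among the $r$ $\beta$-slots, a direct enumeration gives the multiplicity
\[
\binom{n}{r}\prod_{i} m_{i}^{k_{i}}\,
\sum_{\substack{b_{1}+\cdots+b_{s}=r\\ 0\leq b_{i}\leq k_{i}}}
\binom{r}{b_{1},\ldots,b_{s}}
\binom{n-r}{k_{1}-b_{1},\ldots,k_{s}-b_{s},\,n-K}
\]
of the value $k_{1}\lambda_{1}+\cdots+k_{s}\lambda_{s}$. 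The Vandermonde-type identity
$\sum_{b_{1}+\cdots+b_{s}=r}\prod_{i}\binom{k_{i}}{b_{i}}=\binom{K}{r}$
collapses the inner sum, and after cancellation the expression simplifies to $\prod_{i} m_{i}^{k_{i}}\binom{n}{k_{1},\ldots,k_{s}}\binom{K}{r}$, which is exactly \eqref{kla1}. Reading off $\lambda_{\max}=n\lambda_{s}$ (take $k_{s}=n$, so $K=n$) and $\lambda_{\min}=r\lambda_{1}$ (take $k_{1}=r$, $k_{2}=\cdots=k_{s}=0$, so $K=r$) completes the proof. As a sanity check, summing the multiplicities via the multinomial theorem recovers $\binom{n}{r}v^{n}$ in case~$(a)$ and $\binom{n}{r}(v-1)^{r}v^{n-r}$ in case~$(b)$, matching $\dim\Omega_{r}(G^{n})$ obtained from the K\"unneth formula~\eqref{kun}.
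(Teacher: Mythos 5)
Your proof is correct and follows essentially the same route as the paper: reduce to the preceding lemma via $\Omega_p(G)=\{0\}$ for $p\geq 2$, identify $\func{spec}\Delta_1^{(a)}(G)$ with $\func{spec}\Delta_0(G)$ up to the multiplicity of $0$ computed from the Euler characteristic, and then count tuples multinomially. The only (cosmetic) difference is in case $(b)$, where you organize the count by summing over the distribution $(b_1,\dots,b_s)$ of each $\lambda_i$ between the $\beta$-slots and the $\alpha$-slots and collapse it with a Vandermonde convolution, whereas the paper first places the $n-K$ zeros among the $\alpha$-positions and then verifies the same binomial identity directly; both yield the multiplicity $m_1^{k_1}\cdots m_s^{k_s}\binom{n}{k_1\ \dots\ k_s}\binom{K}{r}$.
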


\begin{proof}
By (\ref{De0n}) we have%
\begin{equation*}
\Delta _{0}^{\left( a\right) }(G)=\Delta _{0}(G)
\end{equation*}%
so that we can replace in (\ref{ab}) $\func{spec}\Delta _{0}^{\left(
a\right) }(G)$ by $\func{spec}\Delta _{0}(G)$.

Let us handle $\func{spec}\Delta _{1}^{\left( a\right) }(G)$ in (\ref{ab}).
By Proposition \ref{POm2} we have $\Omega _{p}(G)=\left\{ 0\right\} $ for
all $p\geq 2$. Since $\Omega _{2}(G)=\left\{ 0\right\} $ we have by Example %
\ref{ExOm2=0} that
\begin{equation}
\func{spec}_{+}\Delta _{1}^{\left( a\right) }(G)=\func{spec}_{+}\Delta
_{0}^{\left( a\right) }(G).  \label{1=0}
\end{equation}%
Since $G$ is connected, we have $\dim H_{0}(G)=1$, so that the eigenvalue $0$
of $\Delta _{0}^{\left( a\right) }(G)$ has the multiplicity $1$. The
multiplicity of $0$ as the eigenvalue of $\Delta _{1}^{\left( a\right) }(G)$
(and $\Delta _{1}(G)$) is equal to $\dim H_{1}(G).$ To determine it, we use
the Euler characteristic $\chi $ of the chain complex $\Omega _{\ast }$ that
in this case satisfies
\begin{equation*}
\chi =\dim \Omega _{0}-\dim \Omega _{1}=\dim H_{0}-\dim H_{1}.
\end{equation*}%
It follows that
\begin{equation}
\dim H_{1}(G)=1-\left\vert V\right\vert +\left\vert E\right\vert .
\label{dimH1}
\end{equation}

$\left( a\right) $ Since $\left\vert V\right\vert =\left\vert E\right\vert
=v $, we obtain from (\ref{dimH1}) $\dim H_{1}=1$. Hence, $0$ has the same
multiplicity $1$ in $\func{spec}\Delta _{1}^{\left( a\right) }(G)$ and $%
\func{spec}\Delta _{0}^{\left( a\right) }(G)$. Combining with (\ref{1=0}) we
obtain%
\begin{equation*}
\func{spec}\Delta _{1}^{\left( a\right) }(G)=\func{spec}\Delta _{0}^{\left(
a\right) }(G)=\func{spec}\Delta _{0}(G).
\end{equation*}%
Therefore, we can replace in (\ref{ab}) the both spectra $\func{spec}\Delta
_{0}^{\left( a\right) }(G)$ and $\func{spec}\Delta _{1}^{\left( a\right)
}(G) $ by $\func{spec}\Delta _{0}(G)$, so that
\begin{equation}
\func{spec}\Delta _{r}^{(a)}(G^{n})=\{(\alpha _{1}+\cdots +\alpha _{n})_{%
\binom{n}{r}}\mid \alpha _{i}\in \func{spec}\Delta _{0}(G)\}.  \label{aa}
\end{equation}%
Let $k_{l}$ be the number of times the eigenvalue $\lambda _{l}$ occurs in
the sequence $\alpha _{1},...,\alpha _{n}$. Then
\begin{equation*}
k_{1}+...+k_{s}\leq n
\end{equation*}%
and%
\begin{equation}
\alpha _{1}+...+\alpha _{n}=k_{1}\lambda _{1}+...+k_{s}\lambda _{s}.
\label{alk}
\end{equation}%
The numbers of ways of inserting $k_{l}$ values $\lambda _{l}$ in the
sequence $\alpha _{1},...,\alpha _{n}$ for all $l=1,...,n$ is equal to
\begin{equation*}
\binom{n}{k_{1}\ ...\ k_{s}}=\frac{n!}{k_{1}!...k_{s}!(n-k_{1}-...-k_{s})!}
\end{equation*}%
(and the rest $n-k_{1}-...-k_{s}$ values are $0$). Besides, within $k_{l}$
already fixed positions of $\lambda _{l}$, there are $m_{l}^{k_{l}}$ ways of
selecting this $\lambda _{l}$ from $m_{l}$ instances of $\lambda _{l}$ in $%
\func{spec}\Delta _{0}(G).$ Hence, the number of sequences $\alpha
_{1},...,\alpha _{n}$ where $\lambda _{l}$ occurs $k_{l}$ times, is equal to
\begin{equation*}
m_{1}^{k_{1}}...m_{s}^{k_{s}}\binom{n}{k_{1}\ ...\ k_{s}}.
\end{equation*}%
It follows from (\ref{aa}) that $k_{1}\lambda _{1}+...+k_{s}\lambda _{s}$
has the multiplicity
\begin{equation*}
m_{1}^{k_{1}}...m_{s}^{k_{s}}\binom{n}{k_{1}\ ...\ k_{s}}\binom{n}{r},
\end{equation*}%
which finishes the proof of (\ref{kla}).

$\left( b\right) $ Since $\left\vert V\right\vert =v$ and $\left\vert
E\right\vert =v-1$, we obtain from (\ref{dimH1}) $\dim H_{1}=0.$ Hence, $%
0\notin \Delta _{1}^{\left( a\right) }(G)$, and it follows from (\ref{1=0})
\begin{equation*}
\func{spec}\Delta _{1}^{(a)}(G)=\func{spec}_{+}\Delta _{0}^{\left( a\right)
}(G)=\func{spec}\Delta _{0}(G)\setminus \left\{ 0\right\} .
\end{equation*}%
Substituting into (\ref{ab}) we obtain
\begin{align}
\func{spec}\Delta _{r}^{(a)}(G^{n})& =\left\{ (\alpha _{1}+\cdots +\alpha
_{n-r}+\beta _{1}+\cdots +\beta _{r})_{\binom{n}{r}}\ \right\vert  \notag \\
& \ \ \ \ \ \ \ \ \ \left. \alpha _{i}\in \func{spec}\Delta _{0}(G),\,\beta
_{j}\underset{}{\in }\func{spec}\Delta _{0}(G)\setminus \left\{ 0\right\}
\right\} .  \label{ab0}
\end{align}%
Let $\lambda _{l}$ occur $k_{l}$ times in the sequence $\left\{ \alpha
_{i},\beta _{j}\right\} $ so that
\begin{equation*}
\alpha _{1}+\cdots +\alpha _{n-r}+\beta _{1}+\cdots +\beta _{r}=k_{1}\lambda
_{1}+...+k_{s}\lambda _{s}.
\end{equation*}%
Then there are $n-k_{1}-...-k_{s}$ values $0$ in the sequence $\left\{
\alpha _{i},\beta _{j}\right\} $, and they may be chosen only within $%
\left\{ \alpha _{i}\right\} $; hence, there are
\begin{equation*}
\binom{n-r}{n-k_{1}-...-k_{s}}=\binom{n-r}{k_{1}+...+k_{s}-r}
\end{equation*}%
ways of choosing positions of $0$ in $\left\{ \alpha _{i},\beta _{j}\right\}
.$ In the remaining $k_{1}+...+k_{s}$ positions we can place $k_{l}$ times $%
\lambda _{l}$ in
\begin{equation*}
\binom{k_{1}+...+k_{s}}{k_{1}\ \ ...\ \ k_{s}}
\end{equation*}%
ways. Taking into account the multiplicities $m_{l}$ as in $\left( a\right) $%
, we obtain that number of sequences $\alpha _{1},...,\alpha _{n}$ as above
is equal to%
\begin{equation*}
m_{1}^{k_{1}}...m_{s}^{k_{s}}\binom{n-r}{k_{1}+...+k_{s}-r}\binom{%
k_{1}+...+k_{s}}{k_{1}\ \ ...\ \ k_{s}},
\end{equation*}%
which gives the multiplicity of $k_{1}\lambda _{1}+...+k_{s}\lambda _{s}$ as%
\begin{equation*}
m_{1}^{k_{1}}...m_{s}^{k_{s}}\binom{n-r}{k_{1}+...+k_{s}-r}\binom{%
k_{1}+...+k_{s}}{k_{1}\ \ ...\ \ k_{s}}\binom{n}{r}.
\end{equation*}%
Setting $k=k_{1}+...+k_{s}$, let us verify that
\begin{equation*}
\binom{n-r}{k-r}\binom{k}{k_{1}\ \ ...\ \ k_{s}}\binom{n}{r}=\binom{n}{%
k_{1}\ ...\ k_{s}}\binom{k}{r},
\end{equation*}%
which will conclude the proof of (\ref{kla1}). Indeed, we have%
\begin{eqnarray*}
\binom{n-r}{k-r}\binom{k}{k_{1}\ \ ...\ \ k_{s}}\binom{n}{r} &=&\frac{\left(
n-r\right) !}{\left( n-k\right) !\left( k-r\right) !}\frac{k!}{%
k_{1}!...k_{s}!}\frac{n!}{(n-r)!r!} \\
&=&\frac{n!}{\left( n-k\right) !k_{1}!...k_{s}!}\frac{k!}{(k-r)!r!}=\binom{n%
}{k_{1}\ ...\ k_{s}}\binom{k}{r},
\end{eqnarray*}%
which was claimed.
\end{proof}

In the next examples we compute the spectra $\func{spec}\Delta
_{r}^{(a)}(G^{n})$ for six small digraphs $G$ shown n Fig. \ref{pic6}

\FRAME{ftbphFU}{5.7449in}{0.9357in}{0pt}{\Qcb{Some examples of digraphs}}{%
\Qlb{pic6}}{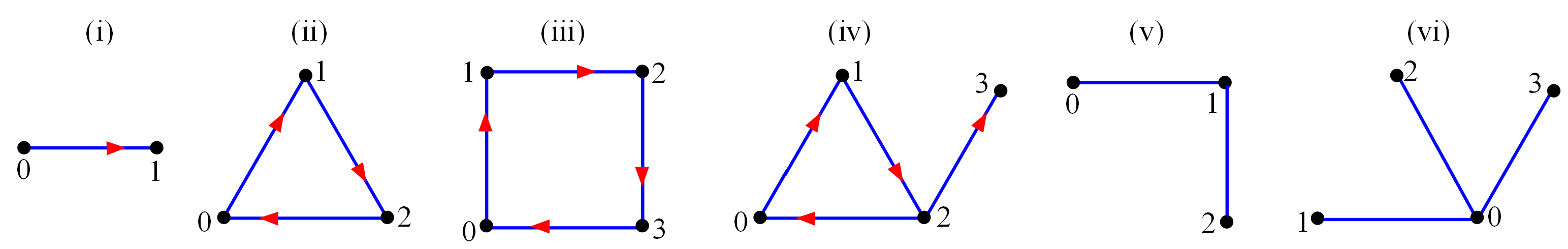}{\special{language "Scientific Word";type
"GRAPHIC";maintain-aspect-ratio TRUE;display "USEDEF";valid_file "F";width
5.7449in;height 0.9357in;depth 0pt;original-width 15.9445in;original-height
2.5694in;cropleft "0";croptop "1";cropright "1";cropbottom "0";filename
'pic6.png';file-properties "XNPEU";}}

\begin{example}
\label{ExDea}Case $\left( \mathrm{i}\right) $. For the interval $G=I=\left\{
0\rightarrow 1\right\} $ the hypotheses of $\left( b\right) $ are satisfied
with $v=2$. The matrix of $\Delta _{0}(I)$ is
\begin{equation*}
\begin{pmatrix}
1 & -1 \\
-1 & 1%
\end{pmatrix}%
\end{equation*}%
and $\func{spec}\Delta _{0}(I)=\left\{ 0,2\right\} .$ Hence, we have only
one positive eigenvalue $\lambda _{1}=2$ with multiplicity $m_{1}=1$. It
follows from (\ref{kla1}) that
\begin{equation}
\func{spec}\Delta _{r}^{(a)}(I^{n})=\left\{ (2k)_{\binom{n}{k}\binom{k}{r}%
}\right\} _{k=r}^{n}.  \label{specDeacube}
\end{equation}%
In particular, have
\begin{equation*}
\lambda _{\max }(\Delta _{r}^{(a)}(I^{n}))=\left( 2n\right) _{\binom{n}{r}}\
\text{and\ \ \ }\lambda _{\min }(\Delta _{r}^{(a)}(I^{n}))=\left( 2r\right)
_{\binom{n}{r}}.
\end{equation*}
\end{example}

\begin{example}
\label{ExDeaT}Case $\left( \mathrm{ii}\right) $. For the torus $G=T=\left\{
0\rightarrow 1\rightarrow 2\rightarrow 0\right\} $ the hypotheses of $\left(
a\right) $ are satisfied with $v=3$. The matrix of $\Delta _{0}(T)$ is%
\begin{equation*}
\begin{pmatrix}
2 & -1 & -1 \\
-1 & 2 & -1 \\
-1 & -1 & 2%
\end{pmatrix}%
\end{equation*}%
whence $\func{spec}\Delta _{0}(T)=\left\{ 0,3,3\right\} .$ Hence, we have
only one positive eigenvalue $\lambda _{1}=3$ with multiplicity $m_{1}=2.$
It follows from (\ref{kla}) that%
\begin{equation}
\func{spec}\Delta _{r}^{\left( a\right) }(T^{n})=\left\{ \left( 3k\right)
_{2^{k}\binom{n}{k}\binom{n}{r}}\right\} _{k=0}^{n}.  \label{specDeatorus}
\end{equation}%
Clearly, we have
\begin{equation*}
\lambda _{\max }(\Delta _{r}^{\left( a\right) }(T^{n}))=\left( 3n\right)
_{2^{n}\binom{n}{r}}\ \ \text{and\ \ \ }\lambda _{\min }(\Delta _{r}^{\left(
a\right) }(T^{n}))=0_{\binom{n}{r}}.
\end{equation*}
\end{example}

\begin{example}
\label{Excycle4}Case $\left( \mathrm{iii}\right) $. Let $G=\left\{
0\rightarrow 1\rightarrow 2\rightarrow 3\rightarrow 0\right\} $ so that the
hypotheses of $\left( a\right) $ are satisfied with $v=4$. Then the matrix
of $\Delta _{0}(G)$ is%
\begin{equation}
\begin{pmatrix}
2 & -1 & 0 & -1 \\
-1 & 2 & -1 & 0 \\
0 & -1 & 2 & -1 \\
-1 & 0 & -1 & 2%
\end{pmatrix}%
,  \label{cycle4}
\end{equation}%
whence $\func{spec}\Delta _{0}(G)=\left\{ 0,2,2,4\right\} .$Setting $\lambda
_{1}=2$,$\ m_{1}=2\ $and $\lambda _{2}=4$, $m_{2}=1$, we obtain by (\ref{kla}%
)
\begin{equation*}
\func{spec}\Delta _{r}^{(a)}(G^{n})=\left\{ \left( 2k_{1}+4k_{2}\right)
_{2^{k_{1}}\binom{n}{k_{1}\ k_{2}}\binom{n}{r}}\right\} _{0\leq
k_{1}+k_{2}\leq n}.
\end{equation*}%
It follows that
\begin{equation*}
\lambda _{\max }(\Delta _{r}^{(a)}(G^{n}))=\left( 4n\right) _{\binom{n}{r}}\
\ \text{and\ \ \ }\lambda _{\min }(\Delta _{r}^{(a)}(G^{n}))=0_{\binom{n}{r}%
}.
\end{equation*}
\end{example}

\begin{example}
Case $\left( \mathrm{iv}\right) $. Let $G=\left\{ 0\rightarrow 1\rightarrow
2\rightarrow 0\rightarrow 3\right\} $ so that $G$ satisfies the hypothesis $%
\left( a\right) $ with $v=4.$ The matrix of $\Delta _{0}(G)$ is
\begin{equation*}
\begin{pmatrix}
3 & -1 & -1 & -1 \\
-1 & 2 & -1 & 0 \\
-1 & -1 & 2 & 0 \\
-1 & 0 & 0 & 1%
\end{pmatrix}%
,
\end{equation*}%
the eigenvalues are $\left\{ 0,1,3,4\right\} $. We obtain by (\ref{kla})%
\begin{equation*}
\func{spec}\Delta _{r}^{(a)}(G^{n})=\left\{ \left(
k_{1}+3k_{2}+4k_{3}\right) _{\binom{n}{k_{1}\,k_{2}\ k_{3}\,}\binom{n}{r}}\
\ \right\} _{0\leq k_{1}+k_{2}+k_{2}\leq n}.
\end{equation*}
\end{example}

\begin{example}
Case $\left( \mathrm{v}\right) $. Let $G=\left\{ 0\sim 1\sim 2\right\} $
where the orientation of edges is arbitrary. Then the hypotheses of $\left(
b\right) $ are satisfied with $v=3$. The matrix of $\Delta _{0}(G)$ is
\begin{equation*}
\begin{pmatrix}
1 & -1 & 0 \\
-1 & 2 & -1 \\
0 & -1 & 1%
\end{pmatrix}%
,
\end{equation*}%
and the eigenvalues are $\left\{ 0,1,3\right\} .$ Setting in (\ref{kla1}) $%
\lambda _{1}=1$, $\lambda _{2}=3$ and $m_{1}=m_{2}=1$, we obtain%
\begin{equation*}
\func{spec}\Delta _{r}^{(a)}(G^{n})=\left\{ (k_{1}+3k_{2})_{\binom{n}{k_{1}\
\ k_{2}}\binom{k_{1}+k_{2}}{r}}\right\} _{r\leq k_{1}+k_{2}\leq n}.
\end{equation*}%
In particular,
\begin{equation*}
\lambda _{\max }(\Delta _{r}^{(a)}(G^{n}))=\left( 3n\right) _{\binom{n}{r}}\
\ \text{and ~}\lambda _{\min }(\Delta _{r}^{(a)}(G^{n}))=r_{\binom{n}{r}}.
\end{equation*}
\end{example}

\begin{example}
Case $\left( \mathrm{vi}\right) $. Let $G=\left\{ 0\sim 1,0\sim 2,0\sim
3\right\} $ where the orientation of edges is arbitrary. Then the hypotheses
of $\left( b\right) $ are satisfied with $v=4$. The matrix of $\Delta
_{0}(G) $ is
\begin{equation*}
\begin{pmatrix}
3 & -1 & -1 & -1 \\
-1 & 1 & 0 & 0 \\
-1 & 0 & 1 & 0 \\
-1 & 0 & 0 & 1%
\end{pmatrix}%
,
\end{equation*}%
$\allowbreak $and the eigenvalues of $\Delta _{0}(G)$ are $\left\{
0,1,1,4\right\} $. Setting in (\ref{kla1}) $\lambda _{1}=1$, $\lambda _{2}=4$%
, $m_{1}=2$ and $m_{2}=1$, we obtain
\begin{equation*}
\func{spec}\Delta _{r}^{(a)}(G^{n})=\left\{ (k_{1}+4k_{2})_{2^{k_{1}}\binom{n%
}{k_{1}\ \ k_{2}}\binom{k_{1}+.k_{2}}{r}}\right\} _{r\leq k_{1}+k_{2}\leq n}.
\end{equation*}%
In particular,
\begin{equation*}
\lambda _{\max }(\Delta _{r}^{(a)}(G^{n}))=\left( 4n\right) _{\binom{n}{r}}\
\ \ \text{and\ \ \ }\lambda _{\min }(\Delta _{r}^{(a)}(G^{n}))=\left\{
r\right\} _{2^{r}\binom{n}{r}}.
\end{equation*}
\end{example}

\subsection{Spectrum of the canonical Hodge Laplacian on $G^{n}$}

Let us define the \emph{Hodge spectrum} of a digraph $G$ as a sequence
\begin{equation*}
\func{spec}G:=\left\{ \func{spec}\Delta _{p}(G)\right\} _{p=0}^{\infty },
\end{equation*}%
where $\Delta _{p}$ is the canonical Hodge Laplacian, as well as the \emph{%
normalized Hodge spectrum }by
\begin{equation*}
\func{spec}^{\left( a\right) }G:=\left\{ \func{spec}\Delta _{p}^{\left(
a\right) }(G)\right\} _{p=0}^{\infty },
\end{equation*}%
where $a$ is the weight (\ref{ap}) and, hence, $\Delta _{p}^{(a)}$ is the
normalized Hodge Laplacian.

\begin{theorem}
\label{thm:specGn}For any finite digraph $G$ and any $n\geq 1$, the
following is true.

$\left( a\right) $ $\func{spec}G^{n}$\ is determined by $\func{spec}^{(a)}G$.

$\left( b\right) $ $\func{spec}G^{n}$\ is determined by $\func{spec}G$.

$\left( c\right) $ If $\Omega _{r}(G)=\left\{ 0\right\} $ for some $r\geq 2$
then $\func{spec}G^{n}$\ is determined by the Euler characteristic $\chi (G)
$ and the sequence
\begin{equation}
\left\{ \func{spec}\Delta _{q}(G)\right\} _{q=0}^{r-2}.\   \label{q-2}
\end{equation}
\end{theorem}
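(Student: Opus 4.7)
The plan is to deduce all three parts from the tools already built in Sections \ref{sec:eigDecmp}--\ref{sec:nHodge}, with Proposition \ref{Tdeltaa} being the real driver: it reduces computing $\func{spec}\Delta_r^{(a)}$ on a Cartesian product to the normalized spectra of the factors, and hence by induction to those of the single factor $G$.

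For part $(a)$, I would first prove, by induction on $n\geq 1$, that the whole sequence $\func{spec}^{(a)}G^{n}=\{\func{spec}\Delta_r^{(a)}(G^n)\}_{r\geq 0}$ is determined by $\func{spec}^{(a)}G$. The base case $n=1$ is tautological. For the inductive step, I apply Proposition \ref{Tdeltaa} with $X=G^{n-1}$ and $Y=G$:
\begin{equation*}
\func{spec}\Delta_r^{(a)}(G^n)=\bigsqcup_{p+q=r}\bigl(\func{spec}\Delta_p^{(a)}(G^{n-1})+\func{spec}\Delta_q^{(a)}(G)\bigr),
\end{equation*}
so the right-hand side is determined by $\func{spec}^{(a)}G^{n-1}$ (hence, by induction, by $\func{spec}^{(a)}G$) together with $\func{spec}^{(a)}G$ itself. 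Having pinned down $\func{spec}^{(a)}G^{n}$, I invoke the second (converse) half of Proposition \ref{Pspeca}: $\func{spec}\Delta_p(G^n)$ is recovered from $\{\func{spec}\Delta_q^{(a)}(G^n)\}_{q=0}^{p}$ together with the weight $a$. Since the weight $a$ is fixed (namely $a_p=p!$), this yields $\func{spec}G^n$, proving $(a)$.

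Part $(b)$ is then immediate from $(a)$ and the first half of Proposition \ref{Pspeca}: starting from $\func{spec}G$, that proposition determines $\func{spec}\Delta_p^{(a)}(G)$ for every $p\geq 0$, i.e.\ the full sequence $\func{spec}^{(a)}G$. Then $(a)$ recovers $\func{spec}G^n$.

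For part $(c)$, the hypothesis $\Omega_r(G)=\{0\}$ with Proposition \ref{Ppq} gives $\Omega_p(G)=\{0\}$, and therefore $\func{spec}\Delta_p(G)=\func{spec}\Delta_p^{(a)}(G)=\varnothing$, for all $p\geq r$. For $p\leq r-2$, Proposition \ref{Pspeca} shows that $\func{spec}\Delta_p^{(a)}(G)$ is determined by $\{\func{spec}\Delta_q(G)\}_{q=0}^{p}$, which is a subsequence of the data we are given. The only nontrivial case is $p=r-1$: here I apply Corollary \ref{Corspeca} (with its $p$ equal to $r-1$, its hypothesis being exactly $\Omega_s=\{0\}$ for $s>r-1$), which tells us that $\func{spec}\Delta_{r-1}^{(a)}(G)$ is determined by $\{\func{spec}\Delta_q(G)\}_{q=0}^{r-2}$, the Euler characteristic $\chi(G)$, and the weight $a$. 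Assembling these cases gives the full sequence $\func{spec}^{(a)}G$ from $\chi(G)$ and $\{\func{spec}\Delta_q(G)\}_{q=0}^{r-2}$, and then $(a)$ finishes the proof.

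There is no real obstacle here; the proof is entirely a matter of chaining three already-established facts in the right order. The only mild subtlety is the $p=r-1$ step in $(c)$, where one must remember that the multiplicity of the eigenvalue $0$ (equal to $\dim H_{r-1}(G)$) is \emph{not} read off from $\{\func{spec}\Delta_q(G)\}_{q=0}^{r-2}$ directly but requires the Euler characteristic to close the alternating sum $\chi(G)=\sum_{q=0}^{r-1}(-1)^{q+1}\dim H_q(G)$, which is exactly what Corollary \ref{Corspeca} was designed to handle.
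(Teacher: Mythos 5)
Your proof is correct and follows essentially the same route as the paper's: Proposition \ref{Tdeltaa} to propagate the normalized spectra to $G^{n}$, and then the $\mathcal{L}_{p}$-machinery of Section \ref{sec:eigDecmp} (which you access through Proposition \ref{Pspeca} and Corollary \ref{Corspeca} rather than re-deriving it inline, as the paper does in part $(a)$) to convert back to the canonical spectra. Your case split in part $(c)$ ($p\geq r$ empty by Proposition \ref{Ppq}, $p\leq r-2$ via Proposition \ref{Pspeca}, $p=r-1$ via Corollary \ref{Corspeca}) is in fact slightly more careful than the paper's, which cites Corollary \ref{Corspeca} uniformly for all $p<r$.
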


\begin{proof}
$\left( a\right) $ Applying Lemma~\ref{lem:specDp''} to operators $\Delta
_{p}^{(a)}$ on $G^{n}$, we obtain that $\func{spec}_{+}\mathcal{L}%
_{p}^{(a)}(G^{n})$ is determined by the sequence
\begin{equation*}
\left\{ \func{spec}_{+}\Delta _{q}^{(a)}(G^{n})\right\} _{q=0}^{p-1}
\end{equation*}
and, hence, by $\func{spec}^{(a)}G^{n}.$ Since by (\ref{D''})
\begin{equation*}
\mathcal{L}_{p}^{(a)}=p\mathcal{L}_{p}\text{,}
\end{equation*}%
we see that $\func{spec}_{+}\mathcal{L}_{p}(G^{n})$ is also determined by $%
\func{spec}^{(a)}G^{n}$.

On the other hand, as it follows from Proposition~\ref{Tdeltaa}, $\func{spec}%
^{\left( a\right) }G^{n}$ is determined by $\func{spec}^{\left( a\right) }G.$
Hence, $\func{spec}_{+}\mathcal{L}_{p}(G^{n})$ is determined by $\func{spec}%
^{\left( a\right) }G.$

By (\ref{spec+}) we have
\begin{equation*}
\func{spec}_{+}\Delta _{p}(G^{n})=\func{spec}_{+}\mathcal{L}%
_{p}(G^{n})\sqcup \func{spec}_{+}\mathcal{L}_{p+1}(G^{n}),
\end{equation*}%
which implies that $\func{spec}_{+}\Delta _{p}(G^{n})$ is also determined by
$\func{spec}^{\left( a\right) }G$.

Finally, the multiplicity of $0$ as an eigenvalue of $\Delta _{p}(G^{n})$ is
equal to $\dim H_{p}(G^{n})$, which by the K\"{u}nneth formula (\ref{kunH})
is determined by $\left\{ \dim H_{p}(G)\right\} _{p\geq 0}$ and, hence, by $%
\func{spec}^{\left( a\right) }G.$ Hence, the spectrum $\func{spec}\Delta
_{p}(G^{n})$ is determined by $\func{spec}^{\left( a\right) }G$, which
finishes the proof.

$(b)$ This follows from $\left( a\right) $ and Proposition \ref{Pspeca}.

$(c)$ By Corollary \ref{Corspeca}, for any $p<r$, $\func{spec}\Delta
_{p}^{(a)}(G)$ is determined by $\chi (G)$ and the sequence $\left\{ \func{%
spec}\Delta _{q}(G)\right\} _{q=0}^{p-1}$ and, hence, by $\left\{ \func{spec}%
\Delta _{q}(G)\right\} _{q=0}^{r-2}.$ Hence, the rest follows from $(a)$.
\end{proof}

The argument in the proof of Theorem \ref{thm:specGn} theoretically allows
to compute $\func{spec}G^{n}$ knowing $\func{spec}^{\left( a\right) }G$.
However, practically it is more convenient to compute first $\func{spec}%
\Delta _{p}^{(a)}(G^{n})$ by means of Proposition \ref{Tdeltaa} or Theorem %
\ref{TspecDeaGn}, and then compute $\func{spec}\mathcal{L}_{p}(G^{n})$ and $%
\func{spec}\Delta _{p}(G^{n})$ using the above argument. In the next section
we apply this approach in order to compute the Hodge spectrum for $n$-cube
and $n$-torus.

\subsection{Isospectral digraphs}

\label{SecIso}We say that two digraphs $G$ and $G^{\prime }$are \emph{Hodge
isospectral }if $\func{spec}G=\func{spec}G^{\prime }$. A natural question in
the spirit of inverse spectral problems is whether Hodge isospectral
digraphs are isomorphic. We show here that in general the answer is
\textquotedblleft no\textquotedblright .

Let $\overline{G}=(V,E)$ be a connected undirected graph such that $%
\overline{G}$ contains neither $3$-cycles nor $4$-cycles. For example, $%
\overline{G}$ can be a polygon with $n\geq 5$ sides.

Let $G$ be a digraph that is obtained from $\overline{G}$ by assigning
arbitrarily orientation on each edge thus turning it into an arrow. Then $%
\Omega _{2}(G)=\left\{ 0\right\} $, and, by Theorem \ref{thm:specGn}$\left(
c\right) $ with $r=2$, we obtain that $\func{spec}G^{n}$ is determined by $%
\func{spec}\Delta _{0}$ and the Euler characteristic $\chi =\left\vert
V\right\vert -\left\vert E\right\vert $. Since both $\Delta _{0}$ and $\chi $
are independent of orientation of the edges (cf. (\ref{made0})\textbf{)}, we
conclude that also $\func{spec}G^{n}$ is independent of orientation of the
edges.

Hence, for any two digraphs $G_{1}$ and $G_{2}$ that are obtained by
assigning orientation of edges in $\overline{G}$, the digraphs $G_{1}^{n}$
and $G_{2}^{n}$ are Hodge isospectral for any $n\geq 1$, but obviously they
do not have to be isomorphic.

However, $G_{1}^{n}$ and $G_{2}^{n}$ are still isomorphic as undirected
graphs.

\begin{problem}
Is it true that Hodge isospectral digraphs are isomorphic as undirected
graphs?
\end{problem}

\section{Spectrum of the Hodge Laplacian on cubes and tori}

\label{sec:cube}\setcounter{equation}{0}Recall that the $n$-cube $I^{n}$ and
$n$-torus $T^{n}$ were defined in Examples \ref{ExCube} and \ref{ExTorus},
respectively. In this section we compute the Hodge spectra of $I^{n}$ and $%
T^{n}.$ Since for both digraphs $G=I$ and $G=T$ we have $\Omega
_{2}(G)=\left\{ 0\right\} $, the space $\Omega _{p}(G^{n})$ is trivial for $%
p>n$ and, hence, it suffices to compute $\func{spec}\Delta _{p}(G^{n})$ for $%
p\leq n$. For $p=0$ we obtain by (\ref{specDeacube})
\begin{equation}
\func{spec}\Delta _{0}(I^{n})=\func{spec}\Delta _{0}^{\left( a\right)
}(I^{n})=\left\{ \left( 2k\right) _{\binom{n}{k}}\right\} _{k=0}^{n}
\label{De0In}
\end{equation}%
and by (\ref{specDeatorus})%
\begin{equation}
\func{spec}\Delta _{0}(T^{n})=\func{spec}\Delta _{0}^{\left( a\right)
}(T^{n})=\left\{ \left( 3k\right) _{2^{k}\binom{n}{k}}\right\} _{k=0}^{n}.
\label{De0Tn}
\end{equation}%
Hence, in what follows we restrict ourselves to $1\leq p\leq n$.

Now we state and prove the main result about $\func{spec}I^{n}.$

\begin{theorem}
\label{TspecDepIn}For all $1\leq p\leq n$ we have%
\begin{equation}
\func{spec}\Delta _{p}(I^{n})=\left\{ \left( \frac{2k}{p}\right) _{\binom{n}{%
k}\binom{k-1}{p-1}}\right\} _{k=p}^{n}\sqcup \left\{ \left( \frac{2k}{p+1}%
\right) _{\binom{n}{k}\binom{k-1}{p}}\right\} _{k=p+1}^{n}.
\label{specDepIn}
\end{equation}%
In particular,%
\begin{equation*}
\lambda _{\max }\left( \Delta _{p}(I^{n})\right) =\left( \frac{2n}{p}\right)
_{\binom{n-1}{p-1}}\ \ \text{and\ \ }\lambda _{\min }\left( \Delta
_{p}(I^{n})\right) =2_{\binom{n+1}{p+1}}
\end{equation*}
\end{theorem}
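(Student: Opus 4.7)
The strategy is to pass through the normalized spectrum of Example~\ref{ExDea} together with the intermediate operators $\mathcal{L}_p = \partial^{\ast}\partial|_{\Omega_p}$, and then apply the decomposition (\ref{spec+}). By Example~\ref{ExDea} (equation (\ref{specDeacube})), $\func{spec}\Delta_r^{(a)}(I^n) = \{(2k)_{\binom{n}{k}\binom{k}{r}}\}_{k=r}^{n}$ for all $0 \le r \le n$; this is the only input about $I^n$ that I will use.

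First, I will determine $\func{spec}_{+}\mathcal{L}_p(I^n)$ for $1 \le p \le n$. The relevant identity is (\ref{specDen+}): $\func{spec}_{+}\Delta_p^{(a)} = p\,\func{spec}_{+}\mathcal{L}_p \,\sqcup\, (p+1)\,\func{spec}_{+}\mathcal{L}_{p+1}$. Since $\Omega_2(I) = \{0\}$, the K\"{u}nneth formula (\ref{kun}) gives $\Omega_{n+1}(I^n) = \{0\}$, so $\mathcal{L}_{n+1}(I^n) = 0$ and this relation determines $\func{spec}_{+}\mathcal{L}_p(I^n)$ uniquely by downward induction from $p = n$ to $p = 1$. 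Guided by the target formula, I will prove the ansatz
\[
\func{spec}_{+}\mathcal{L}_p(I^n) = \left\{\left(\tfrac{2k}{p}\right)_{\binom{n}{k}\binom{k-1}{p-1}}\right\}_{k=p}^{n}.
\]
The verification reduces to combining the two candidate $\mathcal{L}$-spectra (after scaling by $p$ and $p+1$), matching eigenvalues at the common value $2k$, and applying Pascal's rule $\binom{k-1}{p-1} + \binom{k-1}{p} = \binom{k}{p}$ to recover the multiplicities $\binom{n}{k}\binom{k}{p}$ from Example~\ref{ExDea}. The base case $p = n$ reads $\func{spec}_{+}\Delta_n^{(a)}(I^n) = n\,\func{spec}_{+}\mathcal{L}_n(I^n) = \{(2n)_1\}$, matching Example~\ref{ExDea}.

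Second, I apply (\ref{spec+}) directly: $\func{spec}_{+}\Delta_p(I^n) = \func{spec}_{+}\mathcal{L}_p(I^n) \sqcup \func{spec}_{+}\mathcal{L}_{p+1}(I^n)$, which is precisely the formula (\ref{specDepIn}) (the second summand is empty when $p = n$). Because $I^n$ is contractible, iterating the K\"{u}nneth isomorphism (\ref{kunH}) gives $H_p(I^n) = 0$ for $p \geq 1$, so $0$ is not an eigenvalue of $\Delta_p(I^n)$ and $\func{spec}\Delta_p(I^n) = \func{spec}_{+}\Delta_p(I^n)$.

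Finally, the extremal eigenvalues can be read off (\ref{specDepIn}): the largest occurs at $k = n$ in the first component, giving $\lambda_{\max} = 2n/p$ with multiplicity $\binom{n-1}{p-1}$ (the second component only contributes $2n/(p+1) < 2n/p$); the smallest equals $2$, arising simultaneously from $k = p$ in the first component (contribution $\binom{n}{p}$) and from $k = p+1$ in the second component (contribution $\binom{n}{p+1}$), with total multiplicity $\binom{n}{p} + \binom{n}{p+1} = \binom{n+1}{p+1}$ by Pascal's rule. The main obstacle is pinpointing the correct closed form for $\func{spec}_{+}\mathcal{L}_p(I^n)$; once that ansatz is in hand, everything reduces to elementary binomial identities.
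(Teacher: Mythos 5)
Your proposal is correct and follows essentially the same route as the paper: both pass through the ansatz for $\func{spec}_{+}\mathcal{L}_p(I^n)$, use the decomposition $\func{spec}_{+}\Delta_p^{(a)} = p\,\func{spec}_{+}\mathcal{L}_p \sqcup (p+1)\,\func{spec}_{+}\mathcal{L}_{p+1}$ together with Example~\ref{ExDea}, and then assemble $\func{spec}\Delta_p(I^n)$ via (\ref{spec+}) and homological triviality. The only (immaterial) difference is that you run the induction downward from $p=n$ anchored at $\mathcal{L}_{n+1}(I^n)=0$, whereas the paper inducts upward from $p=1$ anchored at $\func{spec}_{+}\mathcal{L}_1 = \func{spec}_{+}\Delta_0$; both are valid and rest on the same Pascal-rule computation.
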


\begin{proof}
\label{LemLpIn}We start with computation of the spectrum of $\mathcal{L}%
_{p}(I^{n})$. Namely, let us first prove that, for $1\leq p\leq n,$
\begin{equation}
\func{spec}_{+}\mathcal{L}_{p}(I^{n})=\left\{ \left( \frac{2k}{p}\right) _{%
\binom{n}{k}\binom{k-1}{p-1}}\right\} _{k=p}^{n}.  \label{D''In}
\end{equation}%
It follows from (\ref{De0In}) that
\begin{equation*}
\func{spec}_{+}\mathcal{L}_{1}(I^{n})=\func{spec}_{+}\Delta
_{0}(I^{n})=\left\{ \left( 2k\right) _{\binom{n}{k}}\right\} _{k=1}^{n},
\end{equation*}%
which matches (\ref{D''In}) for $p=1$ and provides for the induction basis.

For the induction step from $p$ to $p+1$, observe that, by (\ref{specDen+}),
\begin{equation}
\func{spec}_{+}\Delta _{p}^{\left( a\right) }=p\func{spec}_{+}\mathcal{L}%
_{p}\,\sqcup \,(p+1)\func{spec}_{+}\mathcal{L}_{p+1},  \label{specDep}
\end{equation}%
whence%
\begin{equation}
\left( p+1\right) \func{spec}_{+}\mathcal{L}_{p+1}=\func{spec}_{+}\Delta
_{p}^{(a)}\setminus p\func{spec}_{+}\mathcal{L}_{p}.  \label{p+1}
\end{equation}%
By (\ref{specDeacube}) we have
\begin{equation}
\func{spec}\Delta _{p}^{\left( a\right) }(I^{n})=\left\{ \left( 2k\right) _{%
\binom{n}{k}\binom{k}{p}}\right\} _{k=p}^{n}.  \label{specDeacube1}
\end{equation}%
Using also the induction hypothesis, we obtain
\begin{align*}
\left( p+1\right) \func{spec}_{+}\mathcal{L}_{p+1}(I^{n})& =\left\{ \left(
2k\right) _{\binom{n}{k}\binom{k}{p}}\right\} _{k=p}^{n}\setminus p\left\{
\left( \frac{2k}{p}\right) _{\binom{n}{k}\binom{k-1}{p-1}}\right\} _{k=p}^{n}
\\
& =\left\{ \left( 2k\right) _{\binom{n}{k}\binom{k-1}{p}}\right\} _{k=p}^{n}.
\end{align*}%
It follows that%
\begin{equation*}
\func{spec}_{+}\mathcal{L}_{p+1}=\left\{ \left( \frac{2k}{p+1}\right) _{%
\binom{n}{k}\binom{k-1}{p}}\right\} _{k=p+1}^{n},
\end{equation*}%
(where now $k$ starts from $p+1$ because $\binom{k-1}{p}=0$ for $k=p$),
which concludes the induction step.

Now we can prove (\ref{specDepIn}). Since $I$ is homologically trivial, also
all cubes $G=I^{n}$ are homologically trivial, which implies that, for all $%
1\leq p\leq n,$
\begin{equation*}
\func{spec}\Delta _{p}(I^{n})>0.
\end{equation*}%
Hence, by (\ref{spec+}) we have
\begin{equation*}
\func{spec}\Delta _{p}(I^{n})=\func{spec}_{+}\mathcal{L}_{p}(I^{n})\sqcup
\func{spec}_{+}\mathcal{L}_{p+1}(I^{n}).
\end{equation*}%
Substituting $\func{spec}_{+}\mathcal{L}_{p}(I^{n})$ and $\func{spec}_{+}%
\mathcal{L}_{p+1}(I^{n})$ from (\ref{D''In}), we obtain (\ref{specDepIn}).

The maximal eigenvalue $\lambda _{\max }=\frac{2n}{p}$ comes from the first
of two series in (\ref{specDepIn}) for $k=n$, with the multiplicity $\binom{n%
}{n}\binom{n-1}{p-1}=\binom{n-1}{p-1}.$ The minimal eigenvalue $\lambda
_{\min }=2$ comes from the both series, with $k=p$ and $k=p+1$,
respectively, and its multiplicity is%
\begin{equation*}
\binom{n}{p}\binom{p-1}{p-1}+\binom{n}{p+1}\binom{p}{p}=\binom{n+1}{p+1}.
\end{equation*}
\end{proof}

For example, in the case $p=1$ we obtain
\begin{equation}
\func{spec}\Delta _{1}(I^{n})=\left\{ \left( 2k\right) _{\binom{n}{k}%
}\right\} _{k=1}^{n}\sqcup \left\{ k_{_{\left( k-1\right) \binom{n}{k}%
}}\right\} _{k=2}^{n},  \label{specDe1In}
\end{equation}%
\begin{equation*}
\lambda _{\min }\left( \Delta _{1}(I^{n})\right) =2_{\binom{n}{2}}\ \ \
\text{and\ \ \ }\lambda _{\max }\left( \Delta _{1}(I^{n})\right) =\left(
2n\right) _{1},
\end{equation*}%
which solves Problem~6.26 in \cite{G2022}.

\label{SecDepTn}Similarly, we can compute the Hodge spectrum of $n$-torus $%
T^{n}.$

\begin{theorem}
\label{TspecDepTn}For all $1\leq p\leq n$ we have
\begin{equation}
\func{spec}\Delta _{p}(T^{n})=\left\{ \left( \frac{3k}{p}\right) _{2^{k}%
\binom{n}{k}\binom{n-1}{p-1}}\right\} _{k=0}^{n}\sqcup \left\{ \left( \frac{%
3k}{p+1}\right) _{2^{k}\binom{n}{k}\binom{n-1}{p}}\right\} _{k=0}^{n}.
\label{specDepTn}
\end{equation}%
In particular,
\begin{equation*}
\lambda _{\max }\left( \Delta _{p}(T^{n})\right) =\left( \frac{3n}{p}\right)
_{2^{n}\binom{n-1}{p-1}}\ \ \text{and}\ \ \ \lambda _{\min }\left( \Delta
_{p}(T^{n})\right) =0_{\binom{n}{p}}.
\end{equation*}
\end{theorem}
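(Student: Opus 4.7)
The plan is to mirror the proof of Theorem \ref{TspecDepIn} for the cube, with the essential new feature being that $T$ has non-trivial path homology ($\dim H_{1}(T) = 1$), so unlike the cube case the spectrum of $\Delta_{p}(T^{n})$ contains $0$ with non-trivial multiplicity that must be tracked separately via the Künneth formula.

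First I would prove by induction on $p$ that, for $1 \leq p \leq n$,
\[
\func{spec}_{+}\mathcal{L}_{p}(T^{n}) = \left\{ \left(\tfrac{3k}{p}\right)_{2^{k} \binom{n}{k}\binom{n-1}{p-1}} \right\}_{k=1}^{n}.
\]
The base case $p=1$ follows from (\ref{spec+}) applied at $p=0$, since $\mathcal{L}_{0} = 0$ yields $\func{spec}_{+} \mathcal{L}_{1}(T^{n}) = \func{spec}_{+} \Delta_{0}(T^{n})$, and (\ref{De0Tn}) supplies the right-hand side (using $\binom{n-1}{0}=1$). For the inductive step I invoke
\[
(p+1)\func{spec}_{+}\mathcal{L}_{p+1}(T^{n}) = \func{spec}_{+}\Delta_{p}^{(a)}(T^{n}) \setminus p\func{spec}_{+}\mathcal{L}_{p}(T^{n}),
\]
the same rearrangement of (\ref{specDen+}) used in Theorem \ref{TspecDepIn}, substituting the product-spectrum formula (\ref{specDeatorus}) for $\func{spec}\Delta_{p}^{(a)}(T^{n})$ together with the inductive hypothesis. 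The subtraction of multiplicities at each fixed $k$ then reduces to the Pascal identity $\binom{n}{p} - \binom{n-1}{p-1} = \binom{n-1}{p}$, producing the claimed formula at level $p+1$.

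Next I would assemble $\func{spec}\Delta_{p}(T^{n})$ from (\ref{spec+}),
\[
\func{spec}_{+}\Delta_{p}(T^{n}) = \func{spec}_{+}\mathcal{L}_{p}(T^{n}) \sqcup \func{spec}_{+}\mathcal{L}_{p+1}(T^{n}),
\]
which produces the two series of (\ref{specDepTn}) restricted to $k \geq 1$. To finish, the multiplicity of $0$ in $\func{spec}\Delta_{p}(T^{n})$ equals $\dim H_{p}(T^{n})$ by the general Hodge-theoretic correspondence. Since $\dim H_{0}(T) = \dim H_{1}(T) = 1$, the Künneth formula (\ref{kunH}) gives $\dim H_{p}(T^{n}) = \binom{n}{p}$, and this agrees with the $k=0$ contributions in (\ref{specDepTn}) via the Pascal identity $\binom{n-1}{p-1} + \binom{n-1}{p} = \binom{n}{p}$, so both index sums may legitimately begin at $k=0$. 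The extremal eigenvalues then follow by inspection: $\lambda_{\max} = 3n/p$ comes from $k=n$ in the first series (since $3n/(p+1) < 3n/p$) with multiplicity $2^{n}\binom{n-1}{p-1}$, and $\lambda_{\min} = 0$ has multiplicity $\binom{n}{p}$.

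The main obstacle, compared to the homologically trivial cube case, is precisely this bookkeeping of the zero eigenvalue: one must invoke Künneth for homology to compute $\dim H_{p}(T^{n})$ and then verify that the relevant Pascal identity realigns it with the $k=0$ terms of the unified formula. Once this alignment is in place, the induction on $p$ for $\func{spec}_{+}\mathcal{L}_{p}(T^{n})$ is entirely routine, propagated through the spectrum of the normalized Hodge Laplacian on Cartesian powers supplied by Theorem \ref{TspecDeaGn}.
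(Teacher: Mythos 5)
Your proposal is correct and follows essentially the same route as the paper: the same induction on $p$ for $\func{spec}_{+}\mathcal{L}_{p}(T^{n})$ via the rearrangement of (\ref{specDen+}) and the formula (\ref{specDeatorus}), the same assembly through (\ref{spec+}), and the same treatment of the zero eigenvalue via $\dim H_{p}(T^{n})=\binom{n}{p}$ from the K\"{u}nneth formula (\ref{kunH}). No gaps.
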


\begin{proof}
Let us first prove that, for all $1\leq p\leq n$,
\begin{equation}
\func{spec}_{+}\mathcal{L}_{p}(T^{n})=\left\{ \left( \frac{3k}{p}\right)
_{2^{k}\binom{n}{k}\binom{n-1}{p-1}}\right\} _{k=1}^{n}.  \label{D''Tn}
\end{equation}%
The induction basis for $p=1$ is given by (\ref{De0Tn}):
\begin{equation*}
\func{spec}_{+}\mathcal{L}_{1}(T^{n})=\func{spec}_{+}\Delta
_{0}(T^{n})=\left\{ \left( 3k\right) _{2^{k}\binom{n}{k}}\right\} _{k=1}^{n}.
\end{equation*}%
For the induction step from $p$ to $p+1$, we use (\ref{p+1}), the induction
hypothesis, and (\ref{specDeatorus}) in the form%
\begin{equation*}
\func{spec}_{+}\Delta _{p}^{\left( a\right) }(T^{n})=\left\{ \left(
3k\right) _{2^{k}\binom{n}{k}\binom{n}{p}}\right\} _{k=1}^{n},
\end{equation*}%
which yield
\begin{align*}
\left( p+1\right) \func{spec}_{+}\mathcal{L}_{p+1}(T^{n})& =\func{spec}%
_{+}\Delta _{p}^{(a)}(T^{n})\setminus p\func{spec}_{+}\mathcal{L}_{p}(T^{n})
\\
& =\left\{ \left( 3k\right) _{2^{k}\binom{n}{k}\binom{n}{p}}\right\}
_{k=1}^{n}\setminus p\left\{ \left( \frac{3k}{p}\right) _{2^{k}\binom{n}{k}%
\binom{n-1}{p-1}}\right\} _{k=1}^{n} \\
& =\left\{ \left( 3k\right) _{2^{k}\binom{n}{k}\binom{n-1}{p}}\right\}
_{k=1}^{n},
\end{align*}%
thus concluding the proof of (\ref{D''Tn}).

Now we can prove (\ref{specDepTn}). Indeed, by (\ref{spec+}) and (\ref{D''Tn}%
) we obtain
\begin{eqnarray*}
\func{spec}_{+}\Delta _{p}(T^{n}) &=&\func{spec}_{+}\mathcal{L}%
_{p}(T^{n})\sqcup \func{spec}_{+}\mathcal{L}_{p+1}(T^{n}) \\
&=&\left\{ \left( \frac{3k}{p}\right) _{2^{k}\binom{n}{k}\binom{n-1}{p-1}%
}\right\} _{k=1}^{n}\sqcup \left\{ \left( \frac{3k}{p+1}\right) _{2^{k}%
\binom{n}{k}\binom{n-1}{p}}\right\} _{k=1}^{n},
\end{eqnarray*}%
which matches the positive part of $\func{spec}\Delta _{p}(T^{n})$ in (\ref%
{SecDepTn}).

It remains to verify that $0$ is the eigenvalue of $\Delta _{p}(T^{n})$ with
multiplicity%
\begin{equation*}
\left. 2^{k}\binom{n}{k}\binom{n-1}{p-1}+2^{k}\binom{n}{k}\binom{n-1}{p}%
\right\vert _{k=0}=\binom{n}{p},
\end{equation*}%
which amounts to
\begin{equation}
\dim H_{p}(T^{n})=\binom{n}{p}.  \label{HpTn}
\end{equation}%
Indeed, since
\begin{equation*}
\dim H_{0}(T)=\dim H_{1}(T)=1,
\end{equation*}%
(\ref{HpTn}) follows from (\ref{kunH}) by induction in $n$.
\end{proof}

For example, for $p=1$ we obtain
\begin{equation*}
\func{spec}\Delta _{1}(T^{n})=\left\{ \left( 3k\right) _{2^{k}\binom{n}{k}%
}\right\} _{k=0}^{n}\sqcup \left\{ \left( \frac{3k}{2}\right) _{2^{k}\binom{n%
}{k}\left( n-1\right) }\right\} _{k=0}^{n}.
\end{equation*}

\section{Spectrum of the Hodge Laplacian on joins}

\label{sec:Dmn}\setcounter{equation}{0}In this section we use the augmented
chain complex on a digraph $G$:
\begin{equation}
\begin{array}{cccccccccccccc}
0 & \leftarrow & \mathbb{R} & \overset{\partial }{\leftarrow } & \Omega _{0}
& \overset{\partial }{\leftarrow } & \Omega _{1} & \overset{\partial }{%
\leftarrow } & \dots & \overset{\partial }{\leftarrow } & \Omega _{p-1} &
\overset{\partial }{\leftarrow } & \Omega _{p} & \overset{\partial }{%
\leftarrow }\dots%
\end{array}
\label{aug1}
\end{equation}%
where the boundary operator $\partial :\Omega _{0}\rightarrow \Omega _{-1}:=%
\mathbb{R}$ is now redefined\footnote{%
Recall that in Section \ref{sec:review} we defined $\partial e_{i}=0.$} by $%
\partial e_{i}=e$ where $e$ is the unity of $\mathbb{R}$.

In this section we use the canonical inner product $\left\langle \cdot
,\cdot \right\rangle $ on each $\Omega _{p}$ as in (\ref{canon}). Denote by $%
\widetilde{\Delta }_{p}$ the Hodge Laplacian associated with the chain
complex (\ref{aug1}). Of course, $\widetilde{\Delta }_{p}$ coincides with $%
\Delta _{p}$ for $p\geq 1$ but is different for $p=-1$ and $p=0.$ The
advantage of using the chain complex (\ref{aug1}) is that the operator $%
\widetilde{\Delta }_{p}$ satisfies the product rule with respect to the
operation \emph{join} of paths.

Let us briefly recall this notion based on \cite{GLMY2013}, \cite{GMY2017},
\cite{G2022}, \cite{GLY2024}.

For any two digraphs $X$ and $Y$, define their join as the digraph $Z=X\ast
Y $ whose set of vertices is a disjoint union of the set of vertices of $X$
and $Y$, and the set of arrows consists of all arrows in $X$ and $Y$ as well
as of all arrows $x\rightarrow y$ where $x\in X$ and $y\in Y$.

For any elementary paths $u=e_{i_{0}...i_{p}}$ on $X$ and $%
v=e_{j_{0}...j_{q}}$ on $Y$, define their join $u\ast v$ as a path on $Z$ by%
\begin{equation*}
u\ast v=e_{i_{0}...i_{p}j_{0}...j_{q}}.
\end{equation*}%
Observe that if $u$ and $v$ are allowed then $u\ast v$ is also allowed
because of the presence of the arrow $i_{p}\rightarrow j_{0}$. Note also
that the length of $u\ast v$ is $p+q+1$. Using linearity, this definition of
$u\ast v$ extends to all regular paths $u$ on $X$ and $v$ on $Y$.

The operator $\partial $ of the augmented chain complex (\ref{aug1})
satisfies the product rule: if $u\in \mathcal{R}_{p}(X)$ and $v\in \mathcal{R%
}_{q}(Y)$ with $p,q\geq -1$ then%
\begin{equation}
\partial (u\ast v)=\partial u\ast v+\left( -1\right) ^{p+1}u\ast \partial v.
\label{prj}
\end{equation}%
It implies that if $u\in \Omega _{p}(X)$ and $v\in \Omega _{q}(Y)$ then $%
u\ast v\in \Omega _{p+q+1}(Z).$

The augmented chain complex (\ref{aug1}) satisfies the K\"{u}nneth formula
with respect to join: for all $r\geq -1$%
\begin{equation}
\Omega _{r}\left( X\ast Y\right) \cong \tbigoplus_{\left\{ p,q\geq
-1:\,p+q+1=r\right\} }\left( \Omega _{p}\left( X\right) \otimes \Omega
_{q}\left( Y\right) \right) ,  \label{kunj}
\end{equation}%
where the isomorphism is given by $u\otimes v\mapsto u\ast v.$

It is easy to see that, $\ u\in \mathcal{A}_{p}\left( X\right) $, $v\in
\mathcal{A}_{q}\left( Y\right) $, $\varphi \in \mathcal{A}_{p^{\prime
}}\left( X\right) $ and $\psi \in \mathcal{A}_{q^{\prime }}\left( Y\right) $
then%
\begin{equation*}
\langle u\ast v,\varphi \ast \psi \rangle =\langle u,\varphi \rangle \langle
v,\psi \rangle .
\end{equation*}%
This together with (\ref{prj}) and (\ref{kunj}) allows to prove the product
rule for $\widetilde{\Delta }_{p}$ with respect to join, as in the following
statement.

\begin{proposition}
\label{LemDejoin}\emph{\cite[Lemma 5.5]{GLY2024}} Let $X,Y$ be two digraphs.
Then, for $u\in \Omega _{p}\left( X\right) $, $v\in \Omega _{q}\left(
Y\right) $ with $p.q\geq -1$ we have%
\begin{equation}
\widetilde{\Delta }_{r}\left( u\ast v\right) =(\widetilde{\Delta }_{p}u)\ast
v+u\ast \widetilde{\Delta }_{q}v,  \label{Deltajoin}
\end{equation}%
where $r=p+q+1$.
\end{proposition}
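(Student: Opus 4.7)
My plan is to imitate the strategy used for the Cartesian product rule in Proposition~\ref{Tdeltaa}: first derive a join product rule for the adjoint $\partial^\ast$ that parallels (\ref{prj}), and then expand $\widetilde{\Delta}_r = \partial\partial^\ast + \partial^\ast\partial$ and watch the mixed terms cancel.

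Step 1 (inner product on joins). The canonical inner product satisfies
\[
\langle u\ast v,\,\varphi\ast\psi\rangle \;=\; \langle u,\varphi\rangle\,\langle v,\psi\rangle
\]
when $\deg u=\deg\varphi$ and $\deg v=\deg\psi$, and it vanishes whenever the two factorizations have different split points; this is because an elementary path $e_{i_0\dots i_p j_0\dots j_q}$ on $X\ast Y$ determines the transition from $X$-vertices to $Y$-vertices uniquely, so paths produced by joins with different split points are supported on disjoint basis elements.

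Step 2 (product rule for $\partial^\ast$). By the K\"unneth isomorphism (\ref{kunj}), every element of $\Omega_{r+1}(X\ast Y)$ is a linear combination of $\varphi\ast\psi$ with $\varphi\in\Omega_{p'}(X)$, $\psi\in\Omega_{q'}(Y)$, $p'+q'=r$. Test $\partial^\ast(u\ast v)$ against such a $\varphi\ast\psi$:
\[
\langle\partial^\ast(u\ast v),\,\varphi\ast\psi\rangle \;=\; \langle u\ast v,\,\partial\varphi\ast\psi + (-1)^{p'+1}\varphi\ast\partial\psi\rangle.
\]
By the orthogonality from Step~1, only two splits contribute: $(p',q')=(p{+}1,q)$ yields $\langle\partial^\ast u,\varphi\rangle\langle v,\psi\rangle$, while $(p',q')=(p,q{+}1)$ yields $(-1)^{p+1}\langle u,\varphi\rangle\langle\partial^\ast v,\psi\rangle$. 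Reading off the components, one concludes
\[
\partial^\ast(u\ast v) \;=\; \partial^\ast u\ast v + (-1)^{p+1}\,u\ast\partial^\ast v.
\]

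Step 3 (assembly). Apply $\partial$ to the identity from Step~2 using (\ref{prj}), and apply $\partial^\ast$ to (\ref{prj}) using the Step~2 product rule, each time tracking signs carefully: in $\partial u\ast v$ the first factor has degree $p-1$, whereas in $u\ast\partial v$ it has degree $p$, and similarly for $\partial^\ast u\ast v$ and $u\ast\partial^\ast v$. Adding $\partial\partial^\ast(u\ast v)$ to $\partial^\ast\partial(u\ast v)$, the four mixed terms of the shape $\partial^\ast u\ast\partial v$ and $\partial u\ast\partial^\ast v$ appear with opposite signs and cancel in pairs, leaving exactly
\[
\widetilde{\Delta}_r(u\ast v) \;=\; (\partial\partial^\ast u + \partial^\ast\partial u)\ast v + u\ast(\partial\partial^\ast v + \partial^\ast\partial v) \;=\; (\widetilde{\Delta}_p u)\ast v + u\ast\widetilde{\Delta}_q v.
\]

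The main obstacle is the sign bookkeeping in Step~3: the cancellation of the four cross terms is tight and relies on the interplay between the parities $p$, $p\pm 1$ appearing in successive applications of (\ref{prj}) and of the $\partial^\ast$ product rule, so one has to be disciplined about which factor's degree controls the sign at each step. The degenerate cases $p=-1$ or $q=-1$, where one of $u,v$ lies in $\mathbb{R}=\Omega_{-1}$ and the join acts essentially as scalar multiplication, need a short separate check but reduce to the same computation with trivially vanishing $\partial$ or $\partial^\ast$ on one side.
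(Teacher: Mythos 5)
Your proposal is correct and follows exactly the route the paper indicates for this result: the multiplicativity and split-point orthogonality of the inner product under join, the derived product rule for $\partial^\ast$, and the cancellation of the four cross terms when expanding $\partial\partial^\ast+\partial^\ast\partial$ via (\ref{prj}). The sign bookkeeping you flag does work out (the cross terms carry $(-1)^p$ and $(-1)^{p+1}$ respectively and cancel in pairs), so no changes are needed.
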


Combining (\ref{Deltajoin}) with (\ref{kunj}) as in the proof of Proposition %
\ref{Tdeltaa}, we obtain the following.

\begin{proposition}
\label{TXY}\emph{\cite[Theorem 6.36]{G2022}} We have for any $r\geq -1$
\begin{equation}
\func{spec}\widetilde{\Delta }_{r}\left( X\ast Y\right) =\tbigsqcup_{\left\{
p,q\geq -1:p+q=r-1\right\} }\left( \func{spec}\widetilde{\Delta }_{p}\left(
X\right) +\func{spec}\widetilde{\Delta }_{q}\left( Y\right) \right) .
\label{specjoin}
\end{equation}
\end{proposition}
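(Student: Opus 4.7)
The plan is to imitate the proof of Proposition \ref{Tdeltaa} verbatim, only with $\times$ replaced by $\ast$, the Cartesian product rule (\ref{aproduct}) replaced by the join product rule (\ref{Deltajoin}), and the Künneth formula (\ref{kun}) replaced by its join counterpart (\ref{kunj}). The key ingredient is that (\ref{Deltajoin}) already tells us the spectrum must be additive on joins of eigenvectors, and (\ref{kunj}) tells us that these joins span everything.

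First I would pick, for every $p\geq -1$, an orthogonal basis $\{u_{k}^{(p)}\}$ of $\Omega _{p}(X)$ consisting of eigenvectors of $\widetilde{\Delta }_{p}(X)$ with eigenvalues $\{\lambda _{k}^{(p)}\}$, and similarly an orthogonal basis $\{v_{l}^{(q)}\}$ of $\Omega _{q}(Y)$ with eigenvalues $\{\mu _{l}^{(q)}\}$. These bases exist because each $\widetilde{\Delta }_{p}$ is self-adjoint with respect to the canonical inner product; the boundary case $p=-1$ is harmless since $\Omega _{-1}=\mathbb{R}$ is one-dimensional with $\widetilde{\Delta }_{-1}$ acting as a single scalar. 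The multiset of all the eigenvalues $\lambda _{k}^{(p)}$ (over all admissible $k,p$) is exactly $\func{spec}\widetilde{\Delta }_{p}(X)$, and similarly for $Y$.

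Next I would apply Proposition \ref{LemDejoin} to conclude that for any $u=u_{k}^{(p)}$ and $v=v_{l}^{(q)}$ with $p+q+1=r$, the join $u\ast v\in \Omega _{r}(X\ast Y)$ is an eigenvector of $\widetilde{\Delta }_{r}(X\ast Y)$ with eigenvalue $\lambda _{k}^{(p)}+\mu _{l}^{(q)}$:
\begin{equation*}
\widetilde{\Delta }_{r}(u\ast v)=(\widetilde{\Delta }_{p}u)\ast v+u\ast \widetilde{\Delta }_{q}v=(\lambda _{k}^{(p)}+\mu _{l}^{(q)})\,u\ast v.
\end{equation*}
By the join Künneth isomorphism (\ref{kunj}), the collection
\begin{equation*}
\bigl\{\,u_{k}^{(p)}\ast v_{l}^{(q)}\;:\;p,q\geq -1,\;p+q+1=r\,\bigr\}
\end{equation*}
is a basis of $\Omega _{r}(X\ast Y)$. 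Since it is a basis of eigenvectors of $\widetilde{\Delta }_{r}(X\ast Y)$, the spectrum $\func{spec}\widetilde{\Delta }_{r}(X\ast Y)$ is exactly the multiset of all sums $\lambda _{k}^{(p)}+\mu _{l}^{(q)}$ with $p+q=r-1$, which is precisely the right-hand side of (\ref{specjoin}).

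The only mild subtlety, and where I would focus verification, is bookkeeping at the boundary index $p=-1$ or $q=-1$: one must check that the product rule (\ref{Deltajoin}) and the Künneth formula (\ref{kunj}) have been stated and do hold for these extreme values of $p,q$, so that contributions like $\func{spec}\widetilde{\Delta }_{-1}(X)+\func{spec}\widetilde{\Delta }_{r}(Y)$ are accounted for correctly in the disjoint union. Both facts are supplied by the paper (Proposition \ref{LemDejoin} is stated precisely for $p,q\geq -1$, and (\ref{kunj}) includes the index $-1$), so no additional work is needed. Apart from this cosmetic point, the argument is a direct transcription of the product-case proof.
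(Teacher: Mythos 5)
Your proposal is correct and is essentially identical to the paper's own argument: the paper proves Proposition \ref{TXY} precisely by combining the join product rule (\ref{Deltajoin}) with the join K\"{u}nneth formula (\ref{kunj}) in the same way as in the proof of Proposition \ref{Tdeltaa}. Your extra care about the boundary indices $p=-1$ or $q=-1$ is a sensible check, and it resolves exactly as you say.
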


Let $D_{m}$ denote the digraph that consists of $m\geq 1$ disjoint vertices
and no arrows, that is,
\begin{equation*}
D_{m}=\{\underset{n\text{ vertices}}{\underbrace{\bullet ,...,\bullet }}\}.
\end{equation*}%
Consider for any $n\geq 1$ the digraph
\begin{equation*}
D_{m}^{n}=\underset{n\text{\ times}}{\underbrace{D_{m}\ast ...\ast D_{m}}}.
\end{equation*}%
The next theorem is the main result of this section.

\begin{theorem}
\label{TspecDnm}We have, for all $n,m\geq 1$ and $r\geq 0$,%
\begin{equation}
\func{spec}\widetilde{\Delta }_{r-1}(D_{m}^{n})=\left\{ \left( (n-k)m\right)
_{(m-1)^{k}\binom{r}{k}\binom{n}{r}}\right\} _{k=0}^{r}.  \label{specDet}
\end{equation}%
Consequently, for all $r\geq 2$,%
\begin{equation}
\func{spec}\Delta _{r-1}(D_{m}^{n})=\left\{ \left( (n-k)m\right) _{(m-1)^{k}%
\binom{r}{k}\binom{n}{r}}\right\} _{k=0}^{r}.  \label{specDr-1}
\end{equation}
\end{theorem}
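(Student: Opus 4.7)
My plan is to prove (\ref{specDet}) by induction on $n$, using Proposition~\ref{TXY}; the second formula (\ref{specDr-1}) will then follow immediately from the relationship between the augmented and non-augmented Hodge Laplacians.

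For the base case $n=1$, I would directly compute the spectrum of $D_m$ in the augmented chain complex (\ref{aug1}). Since $D_m$ has no arrows, $\Omega_p(D_m) = \{0\}$ for all $p \geq 1$, while $\Omega_0(D_m) \cong \mathbb{R}^m$ and $\Omega_{-1}(D_m) = \mathbb{R}$. The boundary $\partial e_i = e$ is represented by the $1 \times m$ row of ones, which makes $\widetilde{\Delta}_{-1} = \partial \partial^{\ast}$ equal to the scalar $m$, while $\widetilde{\Delta}_0 = \partial^{\ast}\partial$ is the $m \times m$ all-ones matrix with spectrum $\{m_1,\,0_{m-1}\}$. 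For $r \geq 2$ the space $\Omega_{r-1}(D_m)$ vanishes, and the claimed formula also vanishes since $\binom{1}{r}=0$. All three cases match (\ref{specDet}) for $n=1$.

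For the inductive step, I would write $D_m^n = D_m^{n-1} \ast D_m$ and apply Proposition~\ref{TXY}. Because $\widetilde{\Delta}_q(D_m)$ has empty spectrum for $q \geq 1$, only the summands with $q \in \{-1, 0\}$ survive, giving
\begin{align*}
\func{spec}\widetilde{\Delta}_{r-1}(D_m^n)
&= \bigl(\func{spec}\widetilde{\Delta}_{r-1}(D_m^{n-1}) + \{m_1\}\bigr) \\
&\quad \sqcup \bigl(\func{spec}\widetilde{\Delta}_{r-2}(D_m^{n-1}) + \{m_1,\,0_{m-1}\}\bigr),
\end{align*}
with the natural convention that the second summand is empty when $r = 0$. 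Substituting the inductive hypothesis, the eigenvalue $(n-k)m$ for $0 \leq k \leq r$ accumulates the three contributions
\[
(m-1)^k\Bigl[\binom{r}{k}\binom{n-1}{r} + \binom{r-1}{k}\binom{n-1}{r-1} + \binom{r-1}{k-1}\binom{n-1}{r-1}\Bigr].
\]

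The main computational step, which I expect to be the only nontrivial obstacle, is simplifying this coefficient. Pascal's identity collapses the last two binomials into $\binom{r}{k}\binom{n-1}{r-1}$, and a second application of Pascal yields $\binom{n-1}{r} + \binom{n-1}{r-1} = \binom{n}{r}$, giving the desired multiplicity $(m-1)^k\binom{r}{k}\binom{n}{r}$. The boundary cases $k=0$ and $k=r$ fit automatically under the standard conventions $\binom{r-1}{-1} = 0$ and $\binom{r-1}{r} = 0$.

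Finally, for (\ref{specDr-1}), the augmented and non-augmented chain complexes differ only through the boundary $\partial : \Omega_0 \to \Omega_{-1}$, which affects $\widetilde{\Delta}_0$ and $\widetilde{\Delta}_{-1}$ but leaves $\widetilde{\Delta}_p = \Delta_p$ for all $p \geq 1$. Since $r \geq 2$ forces $r - 1 \geq 1$, (\ref{specDr-1}) is immediate from (\ref{specDet}).
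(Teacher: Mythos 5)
Your proof is correct and follows essentially the same route as the paper's: induction on $n$ via the join spectral formula of Proposition~\ref{TXY}, with the base case computed directly from the all-ones matrix of $\widetilde{\Delta}_0(D_m)$ and the scalar $\widetilde{\Delta}_{-1}(D_m)=m$, and the same Pascal-identity simplification of the three binomial contributions in the inductive step. The only cosmetic difference is that the paper disposes of the $r=0$ case separately up front, while you absorb it into the induction by noting the second summand is empty there.
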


More explicitly, (\ref{specDet}) means the following: if $n<r$ then
\begin{equation*}
\func{spec}\widetilde{\Delta }_{r-1}(D_{m}^{n})=\emptyset ,
\end{equation*}%
while for $n\geq r$ the spectrum of $\widetilde{\Delta }_{r-1}(D_{m}^{n})$
consists of the following $r+1$ eigenvalues%
\begin{equation}
(n-r)m,\ (n-r+1)m,\ (n-r+2)m,...,(n-1)m,\ nm,  \label{eingenDnm}
\end{equation}%
with the multiplicities%
\begin{equation}
(m-1)^{r}\tbinom{n}{r},\ \ (m-1)^{r-1}r\tbinom{n}{r},\ (m-1)^{r-2}\tbinom{r}{%
2}\tbinom{n}{r},...,(m-1)r\tbinom{n}{r},\ \tbinom{n}{r}.  \label{mult}
\end{equation}

\begin{example}
\label{ExKn}Let $m=1$, that is, $D_{1}=\left\{ \bullet \right\} $. Clearly, $%
D_{1}^{n}$ coincides with a complete digraph $K_{n}$ that consist of $n$
vertices $\left\{ 0,...,n-1\right\} $ and all arrows $i<j$. Clearly, $K_{n}$
can be regarded an $\left( n-1\right) $-simplex digraph (see Fig. \ref{pic3}%
). \FRAME{ftbphFU}{3.9954in}{1.2393in}{0pt}{\Qcb{Simplices $K_{2}=D_{1}^{2}$
(interval), $K_{3}=D_{1}^{3}\ $(triangle) and $K_{4}=D_{2}^{4}$
(tetrahedron) }}{\Qlb{pic3}}{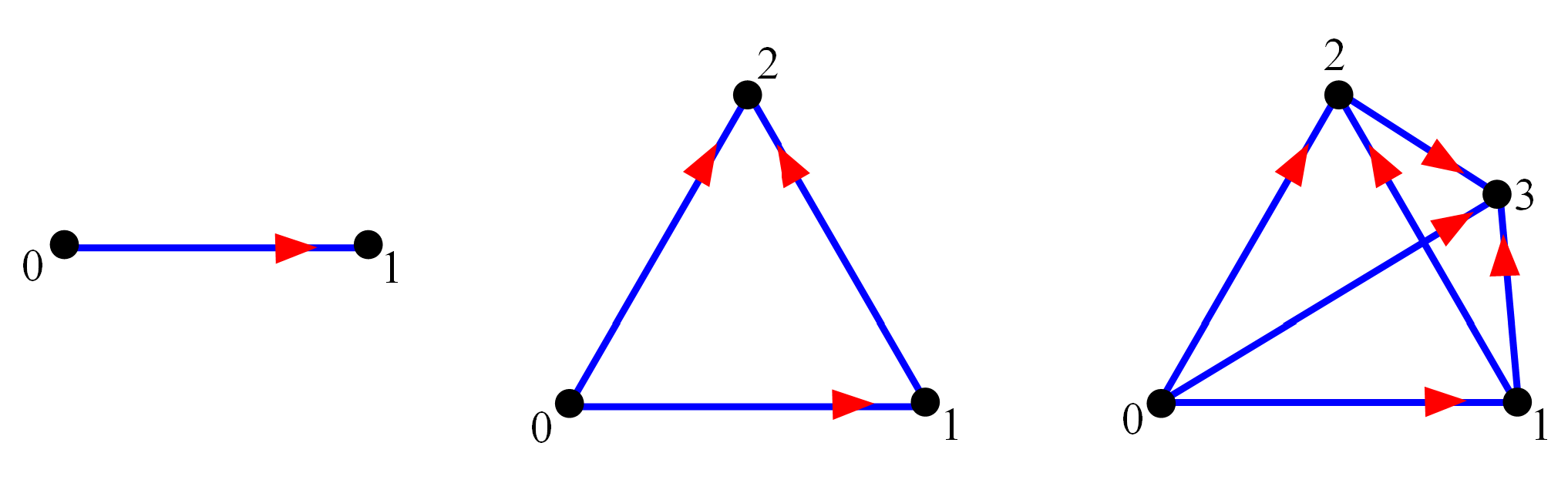}{\special{language "Scientific
Word";type "GRAPHIC";maintain-aspect-ratio TRUE;display "USEDEF";valid_file
"F";width 3.9954in;height 1.2393in;depth 0pt;original-width
9.4169in;original-height 2.9032in;cropleft "0";croptop "1";cropright
"1";cropbottom "0";filename 'pic3.png';file-properties "XNPEU";}}

In this case all the multiplicities in (\ref{mult}) are $0$ except for the
last one $\binom{n}{r}$. Hence, $\func{spec}\Delta _{r-1}(K_{n})$ consists
of a single eigenvalue $n$ with the multiplicity $\tbinom{n}{r}$.
\end{example}

\begin{example}
\RM Let $m=2$. $D_{2}=\left\{ \bullet ,\bullet \right\} $ and $%
D_{2}^{n}=:S^{n-1}$ can be regarded as a digraph sphere of dimension $n-1$.
For example, $S^{1}$ is a \emph{diamond} and $S^{2}$ is an \emph{octahedron}
as on Fig. \ref{pic266d}.\FRAME{ftbphFU}{3.9729in}{1.9043in}{0pt}{\Qcb{%
Digraph spheres: $S^{1}=D_{2}^{2}$ is a diamond, and $S^{2}=D_{2}^{3}$ is an
octahedron.}}{\Qlb{pic266d}}{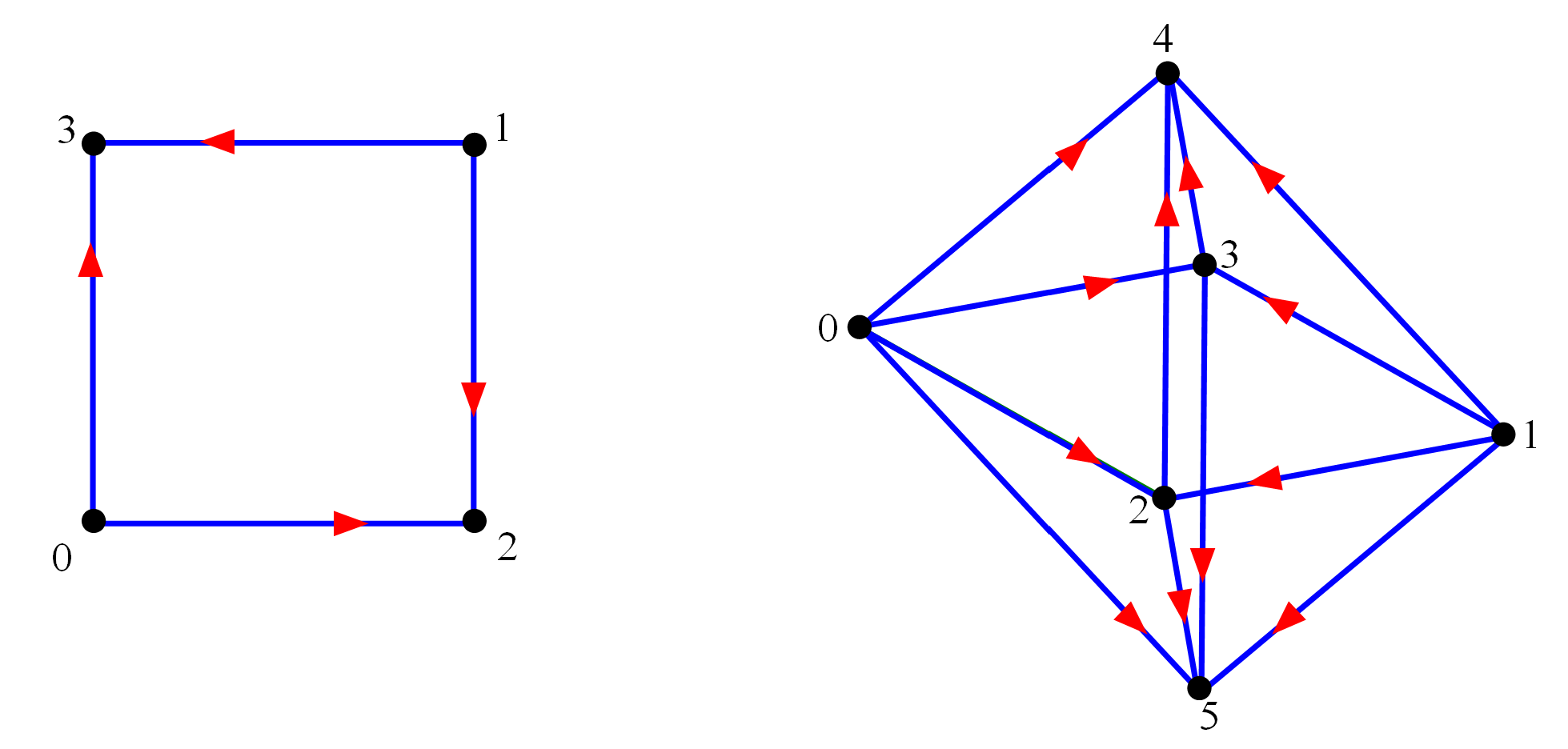}{\special{language "Scientific
Word";type "GRAPHIC";maintain-aspect-ratio TRUE;display "USEDEF";valid_file
"F";width 3.9729in;height 1.9043in;depth 0pt;original-width
9.0693in;original-height 4.3336in;cropleft "0";croptop "1";cropright
"1";cropbottom "0";filename 'pic266d.png';file-properties "XNPEU";}}

In this case (\ref{specDr-1}) becomes%
\begin{equation*}
\func{spec}\Delta _{r-1}(S^{n-1})=\left\{ \left( 2(n-k)\right) _{\binom{r}{k}%
\binom{n}{r}}\right\} _{k=0}^{r}.
\end{equation*}%
For example, for $r=2$ we have
\begin{equation*}
\func{spec}\Delta _{1}(S^{n-1})=\left\{ \left( 2(n-2)\right) _{\binom{n}{2}%
},\left( 2(n-1)\right) _{2\binom{n}{2}},\left( 2n\right) _{\binom{n}{2}%
}\right\} ,
\end{equation*}%
and for $r=3$%
\begin{equation*}
\func{spec}\Delta _{2}(S^{n-1})=\left\{ \left( 2(n-3)\right) _{\binom{n}{3}%
},\left( 2(n-2)\right) _{3\binom{n}{3}},\left( 2\left( n-1\right) \right) _{3%
\binom{n}{3}},\left( 2n\right) _{\binom{n}{3}}\right\} .
\end{equation*}
\end{example}

\begin{example}
\RM Let $m=3$ and $n=2.$ Then we have $D_{3}^{2}=K_{3,3}$ that is a complete
bipartite digraph as on Fig. \ref{pic33}. \FRAME{dtbphFU}{1.8152in}{1.2644in%
}{0pt}{\Qcb{Digraph $K_{3,3}$}}{\Qlb{pic33}}{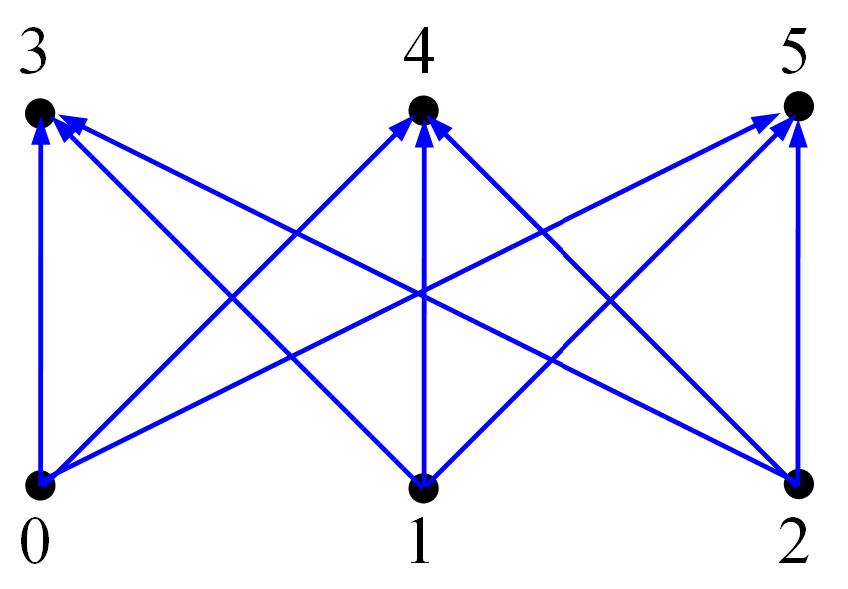}{\special{language
"Scientific Word";type "GRAPHIC";maintain-aspect-ratio TRUE;display
"USEDEF";valid_file "F";width 1.8152in;height 1.2644in;depth
0pt;original-width 3.9263in;original-height 2.7268in;cropleft "0";croptop
"1";cropright "1";cropbottom "0";filename 'pic33ar.png';file-properties
"XNPEU";}}

Then (\ref{specDr-1}) yields for $r=2$ that%
\begin{equation*}
\func{spec}\Delta _{1}(K_{3,3})=\left\{ \left( 3(2-k)\right) _{\binom{2}{k}%
\binom{2}{2}2^{k}}\right\} _{k=0}^{2}=\left\{ 0_{4},\,3_{4},\,6\right\} .
\end{equation*}
\end{example}

\begin{proof}[Proof of Theorem \protect\ref{TspecDnm}]
If $r\geq 2$ then $r-1\geq 1$ and $\Delta _{r-1}=\widetilde{\Delta }_{r-1}$
so that (\ref{specDr-1}) follows from (\ref{specDet}).

Let us first prove (\ref{specDet}) for $r=0$. We use the fact that the
operator $\widetilde{\Delta }_{-1}$ on any digraph is one-dimensional, and
it is multiplication by the number of vertices (see \cite[Section 6.9]{G2022}%
). Hence,
\begin{equation}
\func{spec}\widetilde{\Delta }_{-1}(D_{m}^{n})=\left\{ \left\vert
D_{m}^{n}\right\vert \right\} =\left\{ nm\right\} ,  \label{d-1}
\end{equation}%
which matches (\ref{specDet}) for $r=0.$

Let us prove (\ref{specDet}) for all $r\geq 1$ (and $m\geq 1$) by induction
in $n$.

Induction basis for $n=1$. If $r=1$ then the right hand side of (\ref%
{specDet}) is%
\begin{equation*}
\left\{ 0_{m-1},m\right\} .
\end{equation*}%
For any digraph $G$, the matrix of $\widetilde{\Delta }_{0}(G)$ in the basis
$\left\{ e_{i}\right\} $ is obtained from the matrix of $\Delta _{0}(G)$
(given by (\ref{made0})) by adding $1$ to all its entries (cf. \cite[Section
6.9]{G2022}). Since $\Delta _{0}(D_{m})=0$ by (\ref{made0}), it follows that
the matrix of $\widetilde{\Delta }_{0}(D_{m})$ is an $m\times m$ matrix with
all entries $=1$; hence%
\begin{equation}
\func{spec}\widetilde{\Delta }_{0}(D_{m})=\left\{ 0_{m-1},m\right\} ,
\label{d0}
\end{equation}%
which proves (\ref{specDet}) in this case. If $r\geq 2$ then
\begin{equation*}
\func{spec}\widetilde{\Delta }_{r-1}(D_{m})=\emptyset ,
\end{equation*}%
and the right hand side of (\ref{specDet}) is also empty as all the
multiplicities vanish. Hence, we have verified (\ref{specDet}) for $n=1.$

For the induction step from $n$ to $n+1$, let us use that
\begin{equation*}
D_{m}^{n+1}=D_{m}^{n}\ast D_{m},
\end{equation*}%
and%
\begin{equation*}
\left\vert D_{m}\right\vert =m\text{,\ \ }\left\vert D_{m}^{n}\right\vert
=nm.
\end{equation*}%
Let us apply (\ref{specjoin}) and rewrite it in the form%
\begin{equation*}
\func{spec}\widetilde{\Delta }_{r-1}(D_{m}^{n+1})=\bigsqcup_{\left\{ p,q\geq
0:p+q=r\right\} }\left( \func{spec}\widetilde{\Delta }_{p-1}\left(
D_{m}^{n}\right) +\func{spec}\widetilde{\Delta }_{q-1}\left( D_{m}\right)
\right) .
\end{equation*}%
The spectrum $\func{spec}\widetilde{\Delta }_{q-1}\left( D_{m}\right) $ is
empty if $q\geq 2$. Hence, the values of $q$ here should be restricted to $%
q=0$ and $q=1.$ Applying (\ref{d-1}) and (\ref{d0}) to compute $\func{spec}%
\widetilde{\Delta }_{q-1}\left( D_{m}\right) $ for $q=0$ and $q=1$ as well
as the induction hypothesis (\ref{specDet}) to compute $\func{spec}%
\widetilde{\Delta }_{p-1}\left( D_{m}^{n}\right) $ for $p=r$ and $p=r-1$, we
obtain%
\begin{align}
\func{spec}\widetilde{\Delta }_{r-1}(D_{m}^{\left( n+1\right) })& =\left(
\func{spec}\widetilde{\Delta }_{r-1}\left( D_{m}^{n}\right) +\func{spec}%
\widetilde{\Delta }_{-1}\left( D_{m}\right) \right)  \notag \\
& \ \ \ \ \ \sqcup \left( \func{spec}\widetilde{\Delta }_{r-2}\left(
D_{m}^{n}\right) +\func{spec}\widetilde{\Delta }_{0}\left( D_{m}\right)
\right)  \notag \\
& =\left\{ \left( (n-k)m\right) _{\binom{r}{k}\binom{n}{r}%
(m-1)^{k}}+m\right\} _{k=0}^{r}  \label{3m} \\
& \ \ \ \ \ \sqcup \left\{ \left( (n-l)m\right) _{\binom{r-1}{l}\binom{n}{r-1%
}(m-1)^{l}}+\{0_{m-1},m\}\right\} _{l=0}^{r-1}.  \label{4m}
\end{align}%
The sequence in (\ref{3m}) is equal to
\begin{equation}
\left\{ \left( (n+1-k)m\right) _{\binom{r}{k}\binom{n}{r}(m-1)^{k}}\right\}
_{k=0}^{r},  \label{1m}
\end{equation}%
and the sequence in (\ref{4m}) is equal to
\begin{align}
& \left\{ \left( (n-l)m\right) _{\binom{r-1}{l}\binom{n}{r-1}%
(m-1)^{l+1}},\left( (n-l+1)m\right) _{\binom{r-1}{l}\binom{n}{r-1}%
(m-1)^{l}}\right\} _{l=0}^{r-1}\ \ \   \notag \\
& =\left\{ \left( (n-l)m\right) _{\binom{r-1}{l}\binom{n}{r-1}%
(m-1)^{l+1}}\right\} _{l=0}^{r-1}  \notag \\
& \ \ \ \ \ \ \ \sqcup \left\{ \left( (n+1-l)m\right) _{\binom{r-1}{l}\binom{%
n}{r-1}(m-1)^{l}}\right\} _{l=0}^{r-1}  \notag \\
& =\left\{ \left( (n+1-k)m\right) _{\binom{r-1}{k-1}\binom{n}{r-1}%
(m-1)^{k}}\right\} _{k=1}^{r}  \notag \\
& \ \ \ \ \ \ \ \sqcup \left\{ \left( (n+1-k)m\right) _{\binom{r-1}{k}\binom{%
n}{r-1}(m-1)^{k}}\right\} _{k=0}^{r-1}  \notag \\
& =\left\{ \left( (n+1-k)m\right) _{\binom{r-1}{k-1}\binom{n}{r-1}%
(m-1)^{k}}\right\} _{k=0}^{r}  \notag \\
& \ \ \ \ \ \ \ \sqcup \left\{ \left( (n+1-k)m\right) _{\binom{r-1}{k}\binom{%
n}{r-1}(m-1)^{k}}\right\} _{k=0}^{r}.\ \ \ \ \ \ \ \ \ \ \ \ \   \label{2m}
\end{align}%
Combining (\ref{1m}) and (\ref{2m}), we conclude that $\func{spec}\widetilde{%
\Delta }_{r-1}(D_{m}^{\left( n+1\right) })$ consists of the eigenvalues%
\begin{equation*}
\lambda _{k}=(n+1-k)m,\ \ k=0,...,r,
\end{equation*}%
where the multiplicity of $\lambda _{k}$ is%
\begin{eqnarray*}
&&\left[ \tbinom{r}{k}\tbinom{n}{r}+\tbinom{r-1}{k-1}\tbinom{n}{r-1}+\tbinom{%
r-1}{k}\tbinom{n}{r-1}\right] (m-1)^{k} \\
&&\ \ \underset{}{=}\left[ \tbinom{r}{k}\tbinom{n}{r}+\tbinom{r}{k}\tbinom{n%
}{r-1}\right] (m-1)^{k} \\
&&\ \ \underset{}{=}\tbinom{r}{k}\tbinom{n+1}{r}(m-1)^{k},
\end{eqnarray*}%
which proves the induction step.
\end{proof}

\addcontentsline{toc}{section}{References}
{\footnotesize

}%

AG: Department of Mathematics, University of Bielefeld, 33501 Bielefeld,
Germany

\href{mailto:grigor@math.uni-bielefeld.de}{grigor@math.uni-bielefeld.de}

YL: Yau Mathematical Sciences Center and Department of Mathematics, Tsinghua
University, Beijing, 100084, China

\href{mailto:yonglin@tsinghua.edu.cn}{yonglin@tsinghua.edu.cn}

STY: Yau Mathematical Sciences Center and Department of Mathematics, Tsinghua
University, Beijing, 100084, China

\href{mailto:styau@tsinghua.edu.cn}{styau@tsinghua.edu.cn}

HZ: Yau Mathematical Sciences Center and Department of Mathematics, Tsinghua
University, Beijing, 100084, China

\href{mailto:zhanghh22@mails.tsinghua.edu.cn}{zhanghh22@mails.tsinghua.edu.cn%
}

\end{document}